\documentclass[a4paper,10pt]{article}
\usepackage[T1]{fontenc}
\usepackage{amsmath,amsfonts,amsthm}
\usepackage{a4wide}
\usepackage{authblk}
\usepackage{booktabs}
\usepackage{cite}
\usepackage{comment}
\usepackage{enumerate}
\usepackage{general}
\usepackage{hyperref}
\usepackage[latin1]{inputenc}
\usepackage{mathtools}
\usepackage{nicefrac}
\usepackage{numprint} 
\usepackage{paralist}
\usepackage{pgfplots,pgfplotstable}
\usepackage{pmat}
\usepackage[font={footnotesize}]{caption}
\usepackage[font={footnotesize}]{subcaption}
\usepackage{tikz}
\usepackage{xcolor}

\usepackage[bold]{hhtensor}

\graphicspath{{./figures/pdf/}}


\usepackage{parskip}

\makeatletter
\def\thm@space@setup{%
  \thm@preskip=\parskip \thm@postskip=0pt
}
\makeatother


\newcommand{\email}[1]{\href{mailto:#1}{#1}}

\pgfqkeys{/pgfplots}{
  cycle list name = black white,
}




\newcommand{\lproj}[2][h]{\pi_{#1}^{#2}}

\newcommand{\UT}[1][k]{\underline{\vec{U}}_T^{#1}}
\newcommand{\Uh}[1][k]{\underline{\vec{U}}_h^{#1}}
\newcommand{\UhD}[1][k]{\underline{\vec{U}}_{h,0}^{#1}}

\newcommand{\Ph}[1][k]{P_h^{#1}}


\newcommand{\DT}[1][k]{D_T^{#1}}
\newcommand{\rT}[1][k+1]{\vec{r}_T^{#1}}
\newcommand{\rh}[1][k+1]{\vec{r}_h^{#1}}

\newcommand{\IT}[1][k]{\underline{\vec{I}}_T^{#1}}
\newcommand{\Ih}[1][k]{\underline{\vec{I}}_h^{#1}}

\newcommand{\ms}[1][]{\matr{\sigma}_{#1}}

\newcommand{\vu}[1][]{\vec{u}_{#1}}
\newcommand{\vv}[1][]{\vec{v}_{#1}}
\newcommand{\vw}[1][]{\vec{w}_{#1}}

\newcommand{\uvu}[1][h]{\underline{\vec{u}}_{#1}}
\newcommand{\uvv}[1][h]{\underline{\vec{v}}_{#1}}

\newcommand{\uvw}[1][h]{\underline{\vec{w}}_{#1}}

\newcommand{\tuvu}[1][h]{\widehat{\underline{\vec{u}}}_{#1}}
\newcommand{\tph}[1][h]{\widehat{p}_h}

\newcommand{\uve}[1][]{\underline{\vec{e}}_{#1}}

\newcommand{\vf}[1][]{\vec{f}_{#1}}

\newcommand{\diff}[1][]{\kappa_{#1}}
\newcommand{\ldiff}[1][]{\underline{\kappa}_{#1}}
\newcommand{\udiff}[1][]{\overline{\kappa}_{#1}}
\newcommand{\ar}{\rho_{\diff}}

\newcommand{\tF}{t_{\rm F}}
\newcommand{\dt}{\ud_t}
\newcommand{\ddt}{\delta_t}
\newcommand{\bdf}{\delta_t^{(2)}}

\newcommand{\jump}[2][F]{[#2]_{#1}}
\newcommand{\wavg}[2][F]{\{#2\}_{#1}}

\newcommand{\LT}[1][k]{\vec{L}_{T}^{#1}}
\newcommand{\vphi}{\vec{\varphi}}
\newcommand{\vpsi}{\vec{\psi}}
\newcommand{\UpT}{\vec{U}_{\partial T}}
\newcommand{\ST}[1][k]{\matr{S}_{T}^{#1}}
\newcommand{\Flux}[1][TF]{\vec{\Phi}_{#1}^{k}}
\newcommand{\flux}[1][TF]{\phi_{#1}^{k}}
\newcommand{\Rh}[1][k]{R_{\diff,h}^{#1}}

\newcommand{\charac}[1]{\chi_{#1}}

\newcommand{\dollar}{\mathsf{S}}

\newcommand{\fh}[1][\partial T]{\mathfrak{h}_{#1}}

\newcommand{\cB}[2][k]{\mathcal{B}^{#1}_{#2}}
\newcommand{\bcB}[2][k]{\vec{\mathcal{B}}^{#1}_{#2}}

\newcommand{\cPh}[1][k]{\mathcal{P}^{#1}_h}
\newcommand{\bcU}[2][k]{\vec{\mathcal{U}}^{#1}_{#2}}
\newcommand{\ubcUh}[1][k]{\underline{\vec{\mathcal{U}}}_{h,0}^{#1}}

\newcommand{\matalg}[1]{\mathsf{#1}}


\title{A nonconforming high-order method for the Biot problem on general meshes}
\author[1]{Daniele Boffi\footnote{\email{daniele.boffi@unipv.it}}}
\author[1,2]{Michele Botti\footnote{\email{michele.botti01@universitadipavia.it}}}
\author[2]{Daniele A. Di Pietro\footnote{\email{daniele.di-pietro@umontpellier.fr}}}
\affil[1]{
  Università degli Studi di Pavia, Dipartimento di Matematica ``Felice Casorati'', 27100 Pavia, Italy
}
\affil[2]{
  University of Montpellier, Institut Montpéllierain Alexander Grothendieck, 34095 Montpellier, France
}

\begin{document}
\maketitle

\begin{abstract}

In this work, we introduce a novel algorithm for the Biot problem based on a Hybrid High-Order discretization of the mechanics and a Symmetric Weighted Interior Penalty discretization of the flow.
The method has several assets, including, in particular, the support of general polyhedral meshes and arbitrary space approximation order.
Our analysis delivers stability and error estimates that hold also when the specific storage coefficient vanishes, and shows that the constants have only a mild dependence on the heterogeneity of the permeability coefficient.
Numerical tests demonstrating the performance of the method are provided.
\end{abstract}


\section{Introduction}

We consider in this work the quasi-static Biot's consolidation problem describing Darcian flow in a deformable saturated porous medium.
Our original motivation comes from applications in geosciences, where the support of general polyhedral meshes is crucial, e.g., to handle nonconforming interfaces arising from local mesh adaptation or Voronoi elements in the near wellbore region when modelling petroleum extraction.
Let $\Omega\subset\Real^d$, $1\le d\le 3$, denote a bounded connected polyhedral domain with boundary $\partial\Omega$ and outward normal $\normal$.
For a given finite time $\tF>0$, volumetric load $\vf$, fluid source $g$, the Biot problem consists in finding a vector-valued displacement field $\vu$ and a scalar-valued pore pressure field $p$ solution of
\begin{subequations}
  \label{eq:biot.strong}
  \begin{alignat}{2}
    \label{eq:biot.strong:mech}
    -\DIV\ms(\vu) + \alpha\GRAD p &= \vf &\qquad&\text{in $\Omega\times (0,\tF)$},
    \\
    \label{eq:biot.strong:flow}
    c_0\dt p + \DIV(\alpha\dt\vu) - \DIV(\diff\GRAD p) &= g &\qquad&\text{in $\Omega\times (0,\tF)$},
  \end{alignat}
where $c_0\ge 0$ and $\alpha>0$ are real numbers corresponding to the
constrained specific storage and Biot--Willis coefficients, respectively,
$\diff$ is a real-valued permeability field such that
$\ldiff\le\diff\le\udiff$ a.e. in $\Omega$ for given real numbers $0<\ldiff\le\udiff$, and the Cauchy stress tensor is given by
$$
\ms(\vu)\eqbydef 2\mu\GRADs\vu + \lambda\Id\DIV\vu,
$$
with real numbers $\lambda\ge 0$ and $\mu>0$ corresponding to Lamé's parameters, 
$\GRADs$ denoting the symmetric part of the gradient operator applied to vector-valued fields, and $\Id$ denoting the identity matrix of $\Real^{d\times d}$.
Equations~\eqref{eq:biot.strong:mech} and~\eqref{eq:biot.strong:flow} express, respectively, the mechanical equilibrium and the fluid mass balance.
We consider, for the sake of simplicity, the following homogeneous boundary conditions:
  \begin{alignat}{2}
    \label{eq:biot.strong:bc.u}
    \vec{u} &= \vec{0} &\qquad&\text{on $\partial\Omega\times(0,\tF)$},
    \\
    \label{eq:biot.strong:bc.p}
    \diff\GRAD p\SCAL\normal &= 0 &\qquad&\text{on $\partial\Omega\times(0,\tF)$}.
\end{alignat}
Initial conditions are set prescribing $\vu(\cdot,0)=\vu^0$ and, if $c_0>0$, $p(\cdot,0)=p^0$.
In the incompressible case $c_0=0$, we also need the following compatibility condition on $g$:
\begin{equation}\label{eq:compatibility}
  \int_\Omega g(\cdot,t) = 0\qquad\forall t\in(0,\tF),
\end{equation}
as well as the following zero-average constraint on $p$:
\begin{equation}
  \label{eq:biot.strong:zero.p}
  \int_\Omega p(\cdot,t) = 0 \qquad\forall t\in(0,\tF).
\end{equation}
\end{subequations}
For the derivation of the Biot model we refer to the seminal work of Terzaghi~\cite{Terzaghi:43} and Biot~\cite{Biot:41,Biot:55}. A
theoretical study of problem~\eqref{eq:biot.strong} can be found in~\cite{Showalter:00}.
For the precise regularity assumptions on the data and on the solution under which our a priori bounds and convergence estimates are derived, we refer to Lemma~\ref{lem:a-priori} and Theorem~\ref{thm:err.est}, respectively.

A few simplifications are made to keep the exposition as simple as possible while still retaining all the principal difficulties.
For the Biot--Willis coefficient we take
$$
\alpha=1,
$$
an assumption often made in practice.
For the scalar-valued permeability $\diff$, we assume that it is piecewise constant on a partition $P_\Omega$ of $\Omega$ into bounded open polyhedra.
The treatment of more general permeability coefficients can be done following the ideas of~\cite{Di-Pietro.Ern.ea:08}.
Also, more general boundary conditions than~\eqref{eq:biot.strong:bc.u}--\eqref{eq:biot.strong:bc.p} can be considered up to minor modifications.

Our focus is here on a novel space discretization for the Biot problem (standard choices are made for the time discretization).
Several difficulties have to be accounted for in the design of the space discretization of problem~\eqref{eq:biot.strong}:
in the context of nonconforming methods, the linear elasticity operator has to be carefully engineered to ensure stability expressed by a discrete counterpart of the Korn's inequality;
the Darcy operator has to accomodate rough variations of the permeability coefficient;
the choice of discrete spaces for the displacement and the pressure must satisfy an inf-sup condition to contribute reducing spurious pressure oscillations for small time steps combined with small permeabilities when $c_0=0$.
An investigation of the role of the inf-sup condition in the context of finite element discretizations can be found, e.g., in Murad and Loula~\cite{Murad.Loula:92,Murad.Loula:94}.
A recent work of Rodrigo, Gaspar, Hu, and Zikatanov~\cite{Rodrigo.Gaspar.ea:15} has pointed out that, even for discretization methods leading to an inf-sup stable discretization of the Stokes problem in the steady case, pressure oscillations can arise owing to a lack of monotonicity. Therein, the authors suggest that stabilizing is possible by adding to the mass balance equation an artificial diffusion term with coefficient proportional to $h^2/\tau$ (with $h$ and $\tau$ denoting, respectively, the spatial and temporal meshsizes). However, computing the exact amount of stabilization required is in general feasible only in 1 space dimension.

Several space discretization methods for the Biot problem have been considered in the literature.
Finite element discretizations are discussed, e.g., in the monograph of Lewis and Schrefler~\cite{Lewis.Schrefler:98}; cf. also references therein.
A finite volume discretization for the three-dimensional Biot problem with discontinuous physical coefficients is considered by Naumovich~\cite{Naumovich:06}.
In~\cite{Phillips.Wheeler:07,Phillips.Wheeler:07*1}, Phillips and Wheeler propose and analyze an algorithm that models displacements with continuous elements and the flow with a mixed method.
In~\cite{Phillips.Wheeler:08}, the same authors also propose a different method where displacements are instead approximated using discontinuous Galerkin methods.
In~\cite{Wheeler.Xue.ea:14}, Wheeler, Xue and Yotov study the coupling of multipoint flux discretization for the flow with a discontinuous Galerkin discretization of the displacements.
While certainly effective on matching simplicial meshes, discontinuous Galerkin discretizations of the displacements usually do not allow to prove inf-sup stability on general polyhedral meshes.

In this work, we propose a novel space discretization of problem~\eqref{eq:biot.strong} where the linear elasticity operator is discretized using the Hybrid High-Order (HHO) method of~\cite{Di-Pietro.Ern:15} (c.f. also~\cite{Di-Pietro.Ern.ea:14,Di-Pietro.Drouniou:15,Di-Pietro.Ern:16}), while the flow relies on the Symmetric Weighted Interior Penalty (SWIP) discontinuous Galerkin method of~\cite{Di-Pietro.Ern.ea:08}, see also~\cite[Chapter~4]{Di-Pietro.Ern:12}.
The proposed method has several assets:%
\begin{inparaenum}[(i)]
\item It delivers an inf-sup stable discretization on general meshes including, e.g., polyhedral elements and nonmatching interfaces;
\item it allows to increase the space approximation order to accelerate convergence in the presence of (locally) regular solutions;
\item it is locally conservative on the primal mesh, a desirable property for practitioners and key for a posteriori estimates based on equilibrated fluxes;
\item it is robust with respect to the spatial variations of the permeability coefficient, with constants in the error estimates that depend on the square root of the heterogeneity ratio;
\item it is (relatively) inexpensive: at the lowest order, after static condensation of element unknowns for the displacement, we have 4 (resp. 9) unknowns per face for the displacements + 3 (resp. 4) unknowns per element for the pore pressure in 2d (resp. 3d).
\end{inparaenum}
Finally, the proposed construction is valid for arbitrary space dimension, a feature which can be exploited in practice to conceive dimension-independent implementations.

The material is organized as follows.
In Section~\ref{sec:discretization}, we introduce the discrete setting and formulate the method.
In Section~\ref{sec:stability}, we derive a priori bounds on the exact solution for regular-in-time volumetric load and mass source.
The convergence analysis of the method is carried out in Section~\ref{sec:err.anal}.
Implementation details are discussed in Section~\ref{sec:implementation}, while numerical tests proposed in Section~\ref{sec:num.tests}.
Finally, in Appendix~\ref{sec:flux.form}, we investigate the local conservation properties of the method by identifying computable conservative normal tractions and mass fluxes.


\section{Discretization}\label{sec:discretization}

In this section we introduce the assumptions on the mesh, define the discrete counterparts of the elasticity and Darcy operators and of the hydro-mechanical coupling terms, and formulate the discretization method.

\subsection{Mesh and notation}\label{sec:setting:mesh}

Denote by ${\cal H}\subset \Real_*^+ $ a countable set of meshsizes having $0$ as its unique accumulation point.
Following~\cite[Chapter~1]{Di-Pietro.Ern:12}, we consider $h$-refined spatial mesh sequences $(\Th)_{h \in {\cal H}}$ where, for all $ h \in {\cal H} $, $\Th$ is a finite collection of nonempty disjoint open polyhedral elements $T$
such that $\closure{\Omega}=\bigcup_{T\in\Th}\closure{T}$ and $h=\max_{T\in\Th} h_T$
with $h_T$ standing for the diameter of the element $T$.
We assume that mesh regularity holds in the following sense:
For all $h\in{\cal H}$, $\Th$ admits a matching simplicial submesh $\fTh$ and there exists a real number $\varrho>0$ independent of $h$ such that, for all $h\in{\cal H}$,%
\begin{inparaenum}[(i)]
\item for all simplex $S\in\fTh$ of diameter $h_S$ and inradius $r_S$, $\varrho h_S\le r_S$ and
\item for all $T\in\Th$, and all $S\in\fTh$ such that $S\subset T$, $\varrho h_T \le h_S$.
\end{inparaenum}
A mesh sequence with this property is called regular.
It is worth emphasizing that the simplicial submesh $\fTh$ is just an analysis tool, and it is not used in the actual construction of the discretization method.
These assumptions are essentially analogous to those made in the context of other recent methods supporting general meshes; cf., e.g.,~\cite[Section~2.2]{Beirao-da-Veiga.Brezzi.ea:13*1} for the Virtual Element method.
For a collection of useful geometric and functional inequalities that hold on regular mesh sequences we refer to~\cite[Chapter~1]{Di-Pietro.Ern:12} and~\cite{Di-Pietro.Droniou:15}.

\begin{remark}[Face degeneration]\label{rem:face.deg}
  The above regularity assumptions on the mesh imply that the diameter of the mesh faces is uniformly comparable to that of the cell(s) they belong to, i.e., face degeneration is not allowed.
  Face degeneration has been considered, on the other hand, in~\cite{Cangiani.Georgoulis.ea:14} in the context of interior penalty discontinuous Galerkin methods.
  One could expect that this framework could be used herein while adapting accordingly the penalty strategy~\eqref{eq:ah} and~\eqref{eq:ch}.
  This point lies out of the scope of the present work and will be inspected in the future.
\end{remark}

To avoid dealing with jumps of the permeability inside elements, we additionally assume that, for all $h\in{\cal H}$, $\Th$ is compatible with the known partition $P_\Omega$ on which the diffusion coefficient $\diff$ is piecewise constant, so that jumps can only occur at interfaces.

We define a face $F$ as a hyperplanar closed connected subset of $\closure{\Omega}$ with positive $ (d{-}1) $-dimensional Hausdorff measure and such that%
\begin{inparaenum}[(i)]
\item either there exist $T_1,T_2\in\Th $ such that $F\subset\partial
T_1\cap\partial T_2$ (with $\partial T_i$ denoting the boundary of $T_i$) and $F$ is called an interface or 
\item there exists $T\in\Th$ such that $F\subset\partial T\cap\partial\Omega$ and $F$ is called a boundary face.
\end{inparaenum}%
Interfaces are collected in the set $\Fhi$, boundary faces in $\Fhb$, and we let $\Fh\eqbydef\Fhi\cup\Fhb$.
The diameter of a face $F\in\Fh$ is denoted by $h_F$.
For all $T\in\Th$, $\Fh[T]\eqbydef\{F\in\Fh\st F\subset\partial T\}$ denotes the set of faces contained in $\partial T$ and, for all $F\in\Fh[T]$, $\normal_{TF}$ is the unit normal to $F$ pointing out of $T$.
For a regular mesh sequence, the maximum number of faces in $\Fh[T]$ can be bounded by an integer $\Np$ uniformly in $h$.
For each interface $F\in\Fhi$, we fix once and for all the ordering for the elements $T_1,T_2\in\Th$ such that $F\subset\partial T_1\cap\partial T_2$ and we let $\normal_F\eqbydef\normal_{T_1,F}$. 
For a boundary face, we simply take $\normal_F=\normal$, the outward unit normal to $\Omega$.

For integers $l\ge 0$ and $s\ge 1$, we denote by $\Poly{l}(\Th)$ the space of
fully discontinuous piecewise polynomial functions of total degree $\le l$ on $\Th$
and by $H^s(\Th)$ the space of functions in $L^2(\Omega)$ that lie in $H^s(T)$
for all $T\in\Th$. The notation $H^s(P_\Omega)$ will also be used with
obvious meaning.
Under the mesh regularity assumptions detailed above, using~\cite[Lemma~1.40]{Di-Pietro.Ern:12} together with the results of~\cite{Dupont.Scott:80}, one can prove that there exists a real number $C_{\rm app}$ depending on $\varrho$ and $l$, but independent of $h$, such that, denoting by $\lproj{l}$ the $L^2$-orthogonal projector on $\Poly{l}(\Th)$, the following holds:
For all $s\in\{1,\ldots,l+1\}$ and all $v\in H^s(\Th)$, 
\begin{equation}
  \label{eq:approx.lproj}
  \seminorm[H^m(\Th)]{v - \lproj{l} v }
  \le 
  C_{\rm app} h^{s-m} 
  \seminorm[H^s(\Th)]{v}
  \qquad \forall m \in \{0,\ldots,s-1\}.
\end{equation}

For an integer $l\ge 0$, we consider the space
$$
C^{l}(V)\eqbydef C^{l}([0,\tF];V),
$$
spanned by $V$-valued functions that are $l$ times continuously differentiable in the time interval $[0,\tF]$.
The space $C^{0}(V)$ is a Banach space when equipped with the norm $\norm[C^{0}(V)]{\varphi}\eqbydef\max_{t\in[0,\tF]}\norm[V]{\varphi(t)}$, and the space $C^{l}(V)$ is a Banach space when equipped with the norm $\norm[C^{l}(V)]{\varphi}\eqbydef\max_{0\le m\le l}\norm[C^{0}(V)]{\dt^{m}\varphi}$.
For the time discretization, we consider a uniform mesh of the time interval $(0,\tF)$ of step $\tau\eqbydef\tF/N$ with $N\in\Natural^*$, and introduce the discrete times $t^n\eqbydef n\tau$ for all $0\le n\le N$.
For any $\varphi\in C^{l}(V)$, we set $\varphi^n\eqbydef\varphi(t^n)\in V$, and we introduce the backward differencing operator $\ddt$ such that, for all $1\le n\le N$,
\begin{equation}
  \label{eq:ddt}
  \ddt\varphi^n\eqbydef\frac{\varphi^n-\varphi^{n-1}}{\tau}\in V.
\end{equation}

In what follows, for $X\subset\closure{\Omega}$, we respectively denote by ${(\cdot,\cdot)}_X$ and $\norm[X]{{\cdot}}$ the standard inner product and norm in $L^2(X)$, with the convention that the subscript is omitted whenever $X=\Omega$. The same notation is used in the vector- and tensor-valued cases.
For the sake of brevity, throughout the paper we will often use the notation $a\lesssim b$ for the inequality $a\le Cb$ with generic constant $C>0$ independent of $h$, $\tau$, $c_{0}$, $\lambda$, $\mu$, and $\diff$, but possibly depending on $\varrho$ and the polynomial degree $k$.
We will name generic constants only in statements or when this helps to follow the proofs.

\subsection{Linear elasticity operator}
\label{sec:setting:elasticity}
The discretization of the linear elasticity operator is based on the Hybrid High-Order method of~\cite{Di-Pietro.Ern:15}.
Let a polynomial degree $k\ge 1$ be fixed.
The degrees of freedom (DOFs) for the displacement are collected in the space
\begin{equation}
  \label{eq:Uh}
  \Uh\eqbydef\left\{
  \bigtimes_{T\in\Th}\Poly{k}(T)^d
  \right\}\times\left\{
  \bigtimes_{F\in\Fh}\Poly[d-1]{k}(F)^d
  \right\}.
\end{equation}
For a generic collection of DOFs in $\Uh$ we use the notation $\uvv\eqbydef\big((\vv[T])_{T\in\Th},(\vv[F])_{F\in\Fh}\big)$.
We also denote by $\vv[h]$ (not underlined) the function of $\Poly{k}(\Th)^d$ such that $\restrto{\vv[h]}{T}=\vv[T]$ for all $T\in\Th$.
The restrictions of $\Uh$ and $\uvv$ to an element $T$ are denoted by $\UT$ and $\uvv[T] = \big(\vv[T], (\vv[F])_{F\in\Fh[T]}\big)$, respectively.
For further use, we define the reduction map $\Ih:H^1(\Omega)^d\to\Uh$ such that, for all $\vv\in H^1(\Omega)^d$, 
\begin{equation}
  \label{eq:Ih}
  \Ih\vv = \big( (\lproj[T]{k}\vv)_{T\in\Th}, (\lproj[F]{k}\vv)_{F\in\Fh} \big),
\end{equation}
where $\lproj[T]{k}$ and $\lproj[F]{k}$ denote the $L^2$-orthogonal projectors on $\Poly{k}(T)$ and $\Poly[d-1]{k}(F)$, respectively.
For all $T\in\Th$, the reduction map on $\UT$ obtained by a restriction of $\Ih$ is denoted by $\IT$.

For all $T\in\Th$, we obtain a high-order polynomial reconstruction $\rT:\UT\to\Poly{k+1}(T)^d$ of the displacement field by solving the following local pure traction problem:
For a given local collection of DOFs $\uvv[T]=\big(\vv[T],(\vv[F])_{F\in\Fh[T]}\big)\in\UT$, find $\rT\uvv[T]\in\Poly{k+1}(T)^d$ such that
\begin{equation}
  \label{eq:rT}
  (\GRADs\rT\uvv[T],\GRADs\vw)_T
  = (\GRADs\vv[T],\GRADs\vw)_T
  + \sum_{F\in\Fh[T]}(\vv[F]-\vv[T],\GRADs\vw\normal_{TF})_F
  \qquad\forall\vw\in\Poly{k+1}(T)^d.
\end{equation}
In order to uniquely define the solution to~\eqref{eq:rT}, we prescribe the conditions $\int_T\rT\uvv[T]=\int_T\vv[T]$ and $\int_T\GRADss\rT\uvv[T]=\sum_{F\in\Fh[T]}\int_{F}\frac12\left(\normal_{TF}\otimes\vv[F]-\vv[F]\otimes\normal_{TF}\right)$, where $\GRADss$ denotes the skew-symmetric part of the gradient operator.
We also define the global reconstruction of the displacement $\rh:\Uh\to\Poly{k+1}(\Th)^d$ such that, for all $\uvv[h]\in\Uh$,
\begin{equation}
  \label{eq:rh}
  \restrto{(\rh\uvv[h])}{T} = \rT\uvv[T]\qquad\forall T\in\Th.
\end{equation}
The following approximation property is proved in~\cite[Lemma~2]{Di-Pietro.Ern:15}:
For all $\vv\in H^1(\Omega)^d\cap H^{k+2}(P_\Omega)^d$,
\begin{equation}
  \label{eq:rh.approx}
  \norm{\GRADs(\rh\Ih\vv - \vv)}\lesssim h^{k+1}\norm[H^{k+2}(P_\Omega)^d]{\vv}.
\end{equation}

We next introduce the discrete divergence operator $\DT:\UT\to\Poly{k}(T)$ such that, for all $q\in\Poly{k}(T)$
\begin{subequations}
  \begin{align}\label{eq:DT}
    (\DT\uvv[T],q)_T
    &= (\DIV\vv[T],q)_T
    + \sum_{F\in\Fh[T]}(\vv[F]-\vv[T],q\normal_{TF})_F
    \\
    \label{eq:DT.bis}
    &=
    -(\vv[T],\GRAD q)_{T}
    + \sum_{F\in\Fh[T]}(\vv[F],q\normal_{TF})_F,%
  \end{align}
\end{subequations}
where we have used integration by parts to pass to the second line.
The divergence operator satisfies the following commuting property: 
For all $T\in\Th$ and all $\vv\in H^1(T)^d$,
\begin{equation}
  \label{eq:commuting.DT}
  \DT\IT\vv = \lproj[T]{k}(\DIV\vv).
\end{equation}
The local contribution to the discrete linear elasticity operator is expressed by the bilinear form $a_T$ on $\UT\times\UT$ such that, for all $\uvw[T],\uvv[T]\in\UT$,
\begin{equation}
  \label{eq:aT}
  a_T(\uvw[T],\uvv[T])
  \eqbydef 2\mu\left\{
  (\GRADs\rT\uvw[T],\GRADs\rT\uvv[T])_T
  + s_T(\uvw[T], \uvv[T])
  \right\}
  + \lambda (\DT\uvw[T], \DT\uvv[T])_T,
\end{equation}
where the stabilization bilinear form $s_T$ is such that
\begin{equation}\label{eq:sT}
  s_T(\uvw[T],\uvv[T])
  \eqbydef
  \sum_{F\in\Fh[T]} h_F^{-1} (
  \vec{\Delta}_{TF}^k\uvw[T], 
  \vec{\Delta}_{TF}^k\uvv[T]
  )_F,
\end{equation}
with face-based residual such that, for all $\uvw[T]\in\UT$,
$$
\vec{\Delta}_{TF}^k\uvw[T]\eqbydef(\lproj[F]{k}\rT\uvw[T]-\vw[F])-(\lproj[T]{k}\rT\uvw[T]-\vw[T]).
$$
The global bilinear form $a_h$ on $\Uh\times\Uh$ is assembled element-wise from local contributions:
\begin{equation}
  \label{eq:ah}
  a_h(\uvw[h],\uvv[h])\eqbydef\sum_{T\in\Th} a_T(\uvw[T],\uvv[T]).
\end{equation}
To account for the zero-displacement boundary condition~\eqref{eq:biot.strong:bc.u}, we consider the subspace
	\begin{equation}
  \label{eq:UhD}
  \UhD\eqbydef\left\{
	\uvv[h]=
  \big((\vv[T])_{T\in\Th},(\vv[F])_{F\in\Fh}\big)\in\Uh\st
  \vv[F]\equiv\vec{0}\quad\forall F\in\Fhb
  \right\}	.
\end{equation}
Define on $\Uh$ the discrete strain seminorm
\begin{equation}
  \label{eq:norm1h}
  \norm[\epsilon,h]{\uvv[h]}^2\eqbydef\sum_{T\in\Th}\norm[\epsilon,T]{\uvv[h]}^2,\qquad
  \norm[\epsilon,T]{\uvv[h]}^2\eqbydef\norm[T]{\GRADs\vv[T]}^2
  + \sum_{F\in\Fh[T]}h_F^{-1}\norm[F]{\vv[F]-\vv[T]}^2.
\end{equation}
It can be proved that $\norm[\epsilon,h]{{\cdot}}$ defines a norm on $\UhD$.
Moreover, using~\cite[Corollary~6]{Di-Pietro.Ern:15}, one has the following coercivity and boundedness result for $a_h$:
\begin{equation}
  \label{eq:ah.coer}
  \eta^{-1}(2\mu)\norm[\epsilon,h]{\uvv[h]}^2\le
  \norm[a,h]{\uvv[h]}^2\eqbydef a_h(\uvv[h],\uvv[h])
  \le\eta(2\mu+d\lambda)\norm[\epsilon,h]{\uvv[h]}^2,
\end{equation}
where $\eta>0$ is a real number independent of $h$, $\tau$ and the physical coefficients.
Additionally, we know from~\cite[Theorem~8]{Di-Pietro.Ern:15} that, for all $\vw\in H_0^1(\Omega)^d\cap H^{k+2}(P_\Omega)^d$ such that $\DIV\vw\in H^{k+1}(P_\Omega)$ and all $\uvv[h]\in\UhD$, the following consistency result holds:
\begin{equation}
  \label{eq:ah.consist}
  \left|a_h(\Ih\vw,\uvv[h])+(\DIV\ms(\vw),\vv[h])\right|
  \lesssim h^{k+1}\left(
  2\mu\norm[H^{k+2}(P_\Omega)^d]{\vw} + \lambda\norm[H^{k+1}(P_\Omega)]{\DIV\vw}
  \right)\norm[\epsilon,h]{\uvv[h]}.
\end{equation}
To close this section, we prove the following discrete counterpart of Korn's inequality.

\begin{proposition}[Discrete Korn's inequality]
  \label{prop:poincare}
  There is a real number $C_{\rm K}>0$ depending on $\varrho$ and on $k$ but independent of $h$ such that, for all $\uvv[h]\in\UhD$, recalling that $\vv[h]\in\Poly{k}(\Th)^d$ denotes the broken polynomial function such that $\restrto{\vv[h]}{T}=\vv[T]$ for all $T\in\Th$,
  \begin{equation}
    \label{eq:korn}
    \norm{\vv[h]}\le C_{\rm K}d_{\Omega}\norm[\epsilon,h]{\uvv[h]},
  \end{equation}
  where $d_{\Omega}$ denotes the diameter of $\Omega$.
\end{proposition}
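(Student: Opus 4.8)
The plan is to reduce the claim to a Korn inequality for broken (piecewise $H^1$) vector fields, applied to the broken polynomial function $\vv[h]$, after rewriting the element--face differences appearing in $\norm[\epsilon,h]{\cdot}$ as interelement jumps. Since $\vv[h]\in\Poly{k}(\Th)^d$ is in particular piecewise $H^1$ on $\Th$ (and, a fortiori, on the matching simplicial submesh $\fTh$), the classical tools are available. The homogeneous condition $\vv[F]=\vec{0}$ for all $F\in\Fhb$ built into $\UhD$ will be precisely what rules out nontrivial rigid-body motions, so that the right-hand side becomes a genuine control.

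First I would record the algebraic identity relating the two kinds of jumps. For an interface $F\in\Fhi$ with $F\subset\partial T_1\cap\partial T_2$, writing $\jump{\vv[h]}\eqbydef\restrto{\vv[h]}{T_1}-\restrto{\vv[h]}{T_2}$ on $F$ and inserting the single-valued face unknown $\vv[F]$,
\[
\jump{\vv[h]} = (\restrto{\vv[T_1]}{F}-\vv[F]) - (\restrto{\vv[T_2]}{F}-\vv[F]),
\]
so that $h_F^{-1}\norm[F]{\jump{\vv[h]}}^2 \le 2\sum_{i\in\{1,2\}} h_F^{-1}\norm[F]{\restrto{\vv[T_i]}{F}-\vv[F]}^2$. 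For a boundary face $F\in\Fhb$ contained in $\partial T$ we have $\vv[F]=\vec{0}$, whence $h_F^{-1}\norm[F]{\vv[h]}^2 = h_F^{-1}\norm[F]{\restrto{\vv[T]}{F}-\vv[F]}^2$. Summing over all faces and using that each interface is shared by exactly two elements yields
\[
\sum_{F\in\Fhi} h_F^{-1}\norm[F]{\jump{\vv[h]}}^2 + \sum_{F\in\Fhb} h_F^{-1}\norm[F]{\vv[h]}^2 \lesssim \sum_{T\in\Th}\sum_{F\in\Fh[T]} h_F^{-1}\norm[F]{\vv[F]-\vv[T]}^2 \le \norm[\epsilon,h]{\uvv[h]}^2 .
\]

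Next I would invoke the discrete Korn inequality for piecewise $H^1$ fields (Brenner). Working on the simplicial submesh $\fTh$ is convenient: since $\vv[h]$ is a polynomial inside each $T\in\Th$, its jumps across the faces of $\fTh$ lying in the interior of an element vanish identically, so the only surviving interelement jumps are those across faces in $\Fh$; mesh regularity makes the local lengthscales uniformly equivalent, so the resulting jump seminorm is, up to a factor depending only on $\varrho$, $d$ and $k$, the one on the left of the displayed estimate above. Brenner's inequality then reads
\[
\norm{\vv[h]} \le C_{\rm Br}\, d_\Omega \left( \sum_{T\in\Th}\norm[T]{\GRADs\vv[T]}^2 + \sum_{F\in\Fhi} h_F^{-1}\norm[F]{\jump{\vv[h]}}^2 + \sum_{F\in\Fhb} h_F^{-1}\norm[F]{\vv[h]}^2\right)^{1/2},
\]
with $C_{\rm Br}$ independent of $h$; the explicit diameter factor $d_\Omega$ follows from a dilation argument, the two sides scaling as $\ell^{d/2}$ and $\ell^{(d-2)/2}$ respectively under $x\mapsto\ell x$. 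Combining this with the jump estimate of the previous step gives $\norm{\vv[h]}\le C_{\rm K}\, d_\Omega\,\norm[\epsilon,h]{\uvv[h]}$, as claimed.

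The main obstacle is obtaining the piecewise Korn inequality in the form above, with the correct diameter-explicit scaling and on a polytopal mesh. I would handle the polytopal aspect by transferring to $\fTh$ as indicated, and the scaling by first proving the inequality on a unit-diameter configuration---where the constant depends only on $\varrho$, $d$ and $k$---and then rescaling. A secondary point worth stressing is that the boundary terms $\sum_{F\in\Fhb}h_F^{-1}\norm[F]{\vv[h]}^2$ are indispensable: they encode the homogeneous Dirichlet condition of $\UhD$ and are exactly what prevents the right-hand side from degenerating on rigid-body motions, so the argument genuinely uses $\uvv[h]\in\UhD$ and not merely $\uvv[h]\in\Uh$.
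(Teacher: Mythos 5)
Your proof is correct and follows essentially the same route as the paper: both reduce the claim to Brenner's piecewise Korn inequality for broken vector fields and then convert the interface and boundary jumps of $\vv[h]$ into the element--face differences of $\norm[\epsilon,h]{{\cdot}}$ via the triangle inequality and the condition $\vv[F]=\vec{0}$ for $F\in\Fhb$ built into $\UhD$. The only differences are presentational: you carry the (dimensionally consistent) $h_F^{-1}$ weights on the jump terms and spell out the dilation and submesh arguments behind the $d_\Omega$-scaled broken Korn inequality, points the paper leaves implicit when citing Brenner's result.
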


\begin{proof}
  Using a broken Korn's inequality~\cite{Brenner:04} on $\Poly{k}(\Th)^d$ (this is possible since $k\ge 1$), one has
  \begin{equation}
    \label{eq:korn:1}
    d_{\Omega}^{-2}\norm{\vv[h]}^2\lesssim
    \norm{\GRADsh\vv[h]}^2 
    + \sum_{F\in\Fhi}\norm[F]{\jump{\vv[h]}}^2
    + \sum_{F\in\Fhb}\norm[F]{\restrto{\vv[h]}{F}}^2,
  \end{equation}
  where $\GRADsh$ denotes the broken symmetric gradient on $H^{1}(\Th)^{d}$.
  For an interface $F\in\Fh[T_{1}]\cap\Fh[T_{2}]$, we have introduced the jump $\jump{\vv[h]}\eqbydef\vv[T_{1}]-\vv[T_{2}]$.
  Thus, using the triangle inequality, we get $\norm[F]{\jump{\vv[h]}}\le\norm[F]{\vv[F]-\vv[T_1]}+\norm[F]{\vv[F]-\vv[T_2]}$.
  For a boundary face $F\in\Fhb$ such that $F\in\Fh[T]\cap\Fhb$ for some $T\in\Th$ we have, on the other hand, $\norm[F]{\restrto{\vv[h]}{F}}=\norm[F]{\vv[F]-\vv[T]}$ since $\vv[F]\equiv 0$ (cf.~\eqref{eq:UhD}).
  Using these relations in the right-hand side of~\eqref{eq:korn:1} and rearranging the sums yields the assertion.
\end{proof}

\subsection{Darcy operator}
\label{sec:setting:darcy}
The discretization of the Darcy operator is based on the Symmetric Weighted Interior Penalty method of~\cite{Di-Pietro.Ern.ea:08}, cf. also~\cite[Section~4.5]{Di-Pietro.Ern:12}.
At each time step, the discrete pore pressure is sought in the broken polynomial space
\begin{equation}
  \label{eq:Ph}
  \Ph\eqbydef
    \begin{cases}
      \Poly{k}(\Th) & \text{if $c_0>0$},
      \\
      \Poly[d,0]{k}(\Th) & \text{if $c_0=0$,}
    \end{cases}
\end{equation}
where we have introduced the zero-average subspace $\Poly[d,0]{k}(\Th)\eqbydef\left\{ q_h\in\Poly{k}(\Th)\st (q_h,1) = 0 \right\}$.
For all $F\in\Fhi$, we define the jump and (weighted) average operators such that, for all $\varphi\in H^{1}(\Th)$, denoting by $\varphi_T$ and $\diff[T]$ the restrictions of $\varphi$ and $\diff$ to $T\in\Th$, respectively,
\begin{equation}
  \label{eq:trace.op}
  \jump{\varphi}\eqbydef\varphi_{T_1}-\varphi_{T_2},\qquad
  \wavg{\varphi}\eqbydef\omega_{T_1}\varphi_{T_1} + \omega_{T_2}\varphi_{T_2},
\end{equation}
where $\omega_{T_1}=1-\omega_{T_2}\eqbydef\frac{\diff[T_2]}{(\diff[T_1]+\diff[T_2])}$.
Denoting by $\GRADh$ the broken gradient on $H^1(\Th)$ and letting, for all $F\in\Fhi$, $\lambda_{\diff,F}\eqbydef\frac{2\diff[T_1]\diff[T_2]}{(\diff[T_1]+\diff[T_2])}$, we define the bilinear form $c_h$ on $\Ph\times\Ph$ such that, for all $q_h,r_h\in\Ph$,
\begin{equation}
  \label{eq:ch}
  \begin{aligned}
    c_h(r_h,q_h)
    &\eqbydef
    (\diff\GRADh r_h,\GRADh q_h)
    -\sum_{F\in\Fhi}\big(
    (\wavg{\diff\GRADh r_h}\SCAL\normal_F,\jump{q_h})_F
    + (\jump{r_h},\wavg{\diff\GRADh q_h}\SCAL\normal_F)_F
    \big)
    \\
    &\qquad + \sum_{F\in\Fhi}\frac{\varsigma\lambda_{\diff,F}}{h_F}(\jump{r_h},\jump{q_h})_F,
  \end{aligned}
\end{equation}
where $\varsigma>0$ is a user-defined penalty parameter.
The fact that the boundary terms only appear on internal faces in~\eqref{eq:ch} reflects the Neumann boundary condition~\eqref{eq:biot.strong:bc.p}.
From this point on, we will assume that $\varsigma>C_{\rm tr}^2\Np$ with $C_{\rm tr}$ denoting the constant from the discrete trace inequality~\cite[Eq.~(1.37)]{Di-Pietro.Ern:12}, which ensures that the bilinear form $c_h$ is coercive
(in the numerical tests of Section~\ref{sec:num.tests}, we took $\varsigma = (N_{\partial}+0.1)k^2$).
Since the bilinear form $c_h$ is also symmetric, it defines a seminorm on $\Ph$, denoted hereafter by $\norm[c,h]{{\cdot}}$ (the map $\norm[c,h]{{\cdot}}$ is in fact a norm on $\Poly[d,0]{k}(\Th)$).

\begin{remark}[Alternative stabilization]\label{rem:BR2}
  To get rid of the dependence of the lower threshold of $\varsigma$ on $C_{\rm tr}$, one can resort to the BR2 stabilization; c.f.~\cite{Bassi.Rebay:97} and also~\cite[Section 5.3.2]{Di-Pietro.Ern:12}.
  In passing, this stabilization could also contribute to handle face degeneration since the penalty parameter no longer depends on the inverse of the face diameter (cf. Remark~\ref{rem:face.deg}). This topic will make the object of future investigations.
\end{remark}

The following known results will be needed in the analysis.
Let 
$$
P_*\eqbydef\left\{r\in H^1(\Omega)\cap H^2(P_\Omega)\st\text{$\diff\GRAD r\SCAL\normal=0$ on $\partial\Omega$}\right\},\qquad
P_{*h}^k\eqbydef P_* + \Ph.
$$
Extending the bilinear form $c_h$ to $P_{*h}^k\times P_{*h}^k$, the following consistency result can be proved adapting the arguments of~\cite[Chapter~4]{Di-Pietro.Ern:12} to account for the homogeneous Neumann boundary condition~\eqref{eq:biot.strong:bc.p}: 
\begin{equation}
  \label{eq:ch.consist}
  \forall r\in P_*,\qquad
  -(\DIV(\diff\GRAD r), q)
  = c_h(r, q)\qquad\forall q\in P_{*h}.
\end{equation}
Assuming, additionally, that $r\in H^{k+2}(P_{\Omega})$, as a consequence of~\cite[Lemma~5.52]{Di-Pietro.Ern:12} together with the optimal approximation properties~\eqref{eq:approx.lproj} of $\lproj{k}$ on regular mesh sequences one has,
\begin{equation}
  \label{eq:ch.approx}
  \sup_{q_h\in\Poly[d,0]{k}(\Th)\setminus\{0\}}\frac{c_h(r-\lproj{k}r, q_h)}{\norm[c,h]{q_h}}
  \lesssim \udiff^{\nicefrac12} h^k\norm[H^{k+1}(P_{\Omega})]{r}.
\end{equation}

\subsection{Hydro-mechanical coupling}
\label{sec:coupling}
The hydro-mechanical coupling is realized by means of the bilinear form $b_h$ on $\Uh\times\Poly{k}(\Th)$ such that, for all $\uvv[h]\in\Uh$ and all $q_h\in\Poly{k}(\Th)$,
\begin{equation}
  \label{eq:bh}
  b_h(\uvv[h],q_h)
  \eqbydef \sum_{T\in\Th} b_{T}(\uvv[T],\restrto{q_{h}}{T}),\qquad
  b_{T}(\uvv[T],\restrto{q_{h}}{T})
  \eqbydef- (\DT\uvv[T],\restrto{q_h}{T})_T,
\end{equation}
where $\DT$ is the discrete divergence operator defined by~\eqref{eq:DT}.
A simple verification shows that, for all $\uvv[h]\in\Uh$ and all $q_h\in\Poly{k}(\Th)$,
\begin{equation}\label{eq:bh.cont}
b_h(\uvv[h],q_h)\lesssim\norm[\epsilon,h]{\uvv[h]}\norm{q_h}.
\end{equation}
Additionally, using the definition~\eqref{eq:DT} of $\DT$ and~\eqref{eq:UhD} of $\UhD$, it can be proved that, for all $\uvv[h]\in\UhD$, it holds ($\charac{\Omega}$ denotes here the characteristic function of $\Omega$),
\begin{equation}\label{eq:bh.const}
  b_{h}(\uvv[h],\charac{\Omega})=0.
\end{equation}
The following inf-sup condition expresses the stability of the hydro-mechanical coupling:

\begin{lemma}[inf-sup condition for $b_h$]\label{lem:inf-sup}
  There is a real number $\beta$ depending on $\Omega$, $\varrho$ and $k$ but independent of $h$ such that, for all $q_h\in\Poly[d,0]{k}(\Th)$,
  \begin{equation}
    \label{eq:inf-sup}
    \norm{q_h}\le\beta
    \sup_{\uvv[h]\in\UhD\setminus\{\underline{\vec{0}}\}}\frac{b_h(\uvv[h],q_h)}{\norm[\epsilon,h]{\uvv[h]}}.
  \end{equation}
\end{lemma}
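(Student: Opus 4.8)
The plan is to exhibit an explicit Fortin-type test function realizing the supremum, combining the classical surjectivity of the divergence operator on $H_0^1(\Omega)^d$ with the commuting property~\eqref{eq:commuting.DT} of the discrete divergence. The mechanism is that $\DT$ commutes with $\DIV$ through the $L^2$-projector, so that testing against an interpolated divergence-lifting of $q_h$ reproduces $\norm{q_h}^2$ up to sign.

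First I would fix $q_h\in\Poly[d,0]{k}(\Th)$. Since $q_h$ has zero average over $\Omega$, the classical result on the surjectivity of the divergence (equivalently, the continuous inf-sup/Ne\v{c}as inequality) furnishes a field $\vec{v}\in H_0^1(\Omega)^d$ with $\DIV\vec{v}=q_h$ and $\norm[H^1(\Omega)^d]{\vec{v}}\le C_\Omega\norm{q_h}$, where $C_\Omega$ depends only on $\Omega$. I would then take $\Ih\vec{v}$ as candidate. Because $\vec{v}\in H_0^1(\Omega)^d$ its trace on $\partial\Omega$ vanishes, hence $\lproj[F]{k}\vec{v}\equiv\vec{0}$ for all $F\in\Fhb$, so that $\Ih\vec{v}\in\UhD$ by~\eqref{eq:UhD} and the candidate is admissible. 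Evaluating with the elementwise definition~\eqref{eq:bh}, the commuting property~\eqref{eq:commuting.DT}, and the $L^2$-orthogonality of $\lproj[T]{k}$ against $\restrto{q_h}{T}\in\Poly{k}(T)$ gives, for each $T\in\Th$,
\[
-(\DT\IT\vec{v},\restrto{q_h}{T})_T=-(\lproj[T]{k}(\DIV\vec{v}),\restrto{q_h}{T})_T=-(\DIV\vec{v},\restrto{q_h}{T})_T=-\norm[T]{q_h}^2,
\]
whence $b_h(\Ih\vec{v},q_h)=-\norm{q_h}^2$ (the sign is immaterial inside the supremum, as one may replace $\vec{v}$ by $-\vec{v}$).

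It remains to bound the candidate in the discrete strain seminorm, the key estimate being $\norm[\epsilon,h]{\Ih\vec{v}}\lesssim\norm[H^1(\Omega)^d]{\vec{v}}$. This I would establish elementwise: the volumetric contribution $\norm[T]{\GRADs\lproj[T]{k}\vec{v}}$ is controlled by $\norm[T]{\GRAD\vec{v}}$ via the $H^1$-stability of the $L^2$-projector on regular meshes, while each face term is rewritten as $\lproj[F]{k}\vec{v}-\lproj[T]{k}\vec{v}=\lproj[F]{k}(\vec{v}-\lproj[T]{k}\vec{v})$ and estimated by a discrete trace inequality together with the approximation property~\eqref{eq:approx.lproj}, again yielding $\norm[T]{\GRAD\vec{v}}$. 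Collecting the pieces,
\[
\sup_{\uvv[h]\in\UhD\setminus\{\underline{\vec{0}}\}}\frac{b_h(\uvv[h],q_h)}{\norm[\epsilon,h]{\uvv[h]}}\ge\frac{|b_h(\Ih\vec{v},q_h)|}{\norm[\epsilon,h]{\Ih\vec{v}}}\ge\frac{\norm{q_h}^2}{C\,\norm[H^1(\Omega)^d]{\vec{v}}}\ge\frac{1}{C\,C_\Omega}\norm{q_h},
\]
which is the claim with $\beta\eqbydef C\,C_\Omega$.

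The main obstacle is precisely the boundedness of the reduction map $\Ih$ in the discrete strain seminorm: it is the only step that invokes the mesh-regularity machinery (the $H^1$-stability of the $L^2$-projectors and the discrete trace inequalities), and it is what makes $\beta$ inherit the dependence on $\varrho$ and $k$. By contrast, the surjectivity of the divergence and the commuting property combine in an essentially algebraic way, and the admissibility $\Ih\vec{v}\in\UhD$ is a direct consequence of the homogeneous trace of $\vec{v}$.
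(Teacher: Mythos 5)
Your proposal is correct and follows essentially the same route as the paper's own proof: a divergence lifting $\vec{v}\in H_0^1(\Omega)^d$ of $q_h$, the candidate test function $\Ih\vec{v}$ (admissible in $\UhD$ since $\vec{v}$ has zero trace), the commuting property~\eqref{eq:commuting.DT} to reproduce $\norm{q_h}^2$, and the key boundedness estimate $\norm[\epsilon,h]{\Ih\vec{v}}\lesssim\norm[H^1(\Omega)^d]{\vec{v}}$ obtained elementwise from the $H^1$-stability of the $L^2$-projector and trace arguments. The only minor discrepancy is terminological: for the face terms the paper uses the \emph{continuous} trace inequality followed by a local Poincar\'e inequality applied to $\lproj[T]{k}\vec{v}-\vec{v}$, and your ``discrete trace inequality'' should likewise be a continuous one (plus~\eqref{eq:approx.lproj}), since $\vec{v}-\lproj[T]{k}\vec{v}$ is not a polynomial; with that reading the two arguments coincide.
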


\begin{proof}
  Let $q_h\in\Poly[d,0]{k}(\Th)$.
  Classically \cite{Boffi.Brezzi.ea:13}, there is $\vv[q_h]\in H_0^1(\Omega)^d$ such that $\DIV\vv[q_h]=q_h$ and $\norm[H^1(\Omega)^d]{\vv[q_h]}\lesssim\norm{q_h}$.
  Let $T\in\Th$. Using the $H^1$-stability of the $L^2$-orthogonal projector (cf., e.g.,~\cite[Corollary~3.7]{Di-Pietro.Droniou:15}), it is inferred that
$$
  \norm[T]{\GRADs\lproj[T]{k}\vv[q_h]}
  \le\norm[T]{\GRAD\vv[q_h]}.
$$
Moreover, for all $F\in\Fh[T]$, using the boundedness of $\lproj[F]{k}$ and the continuous trace inequality of~\cite[Lemma~1.49]{Di-Pietro.Ern:12} followed by a local Poincaré's inequality for the zero-average function $(\lproj[T]{k}\vv[q_h]-\vv[q_h])$, we have 
$$
h_F^{-\nicefrac12}\norm[F]{\lproj[F]{k}(\lproj[T]{k}\vv[q_h]-\vv[q_h])}
\le h_F^{-\nicefrac12}\norm[F]{\lproj[T]{k}\vv[q_h]-\vv[q_h]}
\lesssim\norm[T]{\GRAD\vv[q_h]}.
$$
As a result, recalling the definition~\eqref{eq:Ih} of the local reduction map $\IT$ and~\eqref{eq:norm1h} of the strain norm $\norm[\epsilon,T]{{\cdot}}$, it follows that $\norm[\epsilon,T]{\IT\vv[q_h]}\lesssim\norm[H^1(T)^d]{\vv[q_h]}$. Squaring and summing over $T\in\Th$ the latter inequality, we get
\begin{equation}
\label{bnd1h_H1}
\norm[\epsilon,h]{\Ih\vv[q_h]}\lesssim\norm[H^1(\Omega)^d]{\vv[q_h]}\lesssim\norm{q_h}.
\end{equation}
  Using~\eqref{bnd1h_H1}, the commuting property~\eqref{eq:commuting.DT}, and denoting by $\dollar$ the supremum in~\eqref{eq:inf-sup}, one has
  $$
  \norm{q_h}^2
  = (\DIV\vv[q_{h}],q_{h})
  = \sum_{T\in\Th}(\DT\IT\vv[q_h],q_h)_T
  = -b_h(\Ih\vv[q_h],q_h)
  \le\dollar\norm[\epsilon,h]{\Ih\vv[q_h]}
  \lesssim\dollar\norm{q_h}.\qedhere
  $$
\end{proof}

\subsection{Formulation of the method}\label{sec:discrete}

For all $1\le n\le N$, the discrete solution $(\uvu[h]^n,p_h^n)\in\UhD\times\Ph$ at time $t^n$ is such that, for all $(\uvv[h],q_{h})\in\UhD\times\Poly{k}(\Th)$,
\begin{subequations}
  \label{eq:biot.h}
  \begin{alignat}{2}
    \label{eq:biot.h:mech}
    a_h(\uvu[h]^n,\uvv[h]) + b_h(\uvv[h],p_h^n) &= l_h^n(\uvv[h]),
    \\
    \label{eq:biot.h:flow}
    (c_0\ddt p_h^n,q_h) -b_h(\ddt\uvu[h]^n,q_h) + c_h(p_h^n,q_h) &= (g^n,q_h),
  \end{alignat}
\end{subequations}
where the linear form $l_h^n$ on $\Uh$ is defined as
\begin{equation}
  \label{eq:lh}
  l_h^n(\uvv[h])\eqbydef(\vf^n,\vv[h])=\sum_{T\in\Th}(\vf^n,\vv[T])_T.
\end{equation}
In petroleum engineering, the usual way to enforce the initial condition is to compute a displacement from an initial (usually hydrostatic) pressure distribution.
For a given scalar-valued initial pressure field $p^{0}\in L^{2}(\Omega)$, we let $\tph^0\eqbydef\lproj{k}p^0$ and set $\uvu[h]^{0}=\tuvu[h]^{0}$ with $\tuvu[h]^{0}\in\UhD$ unique solution of
\begin{equation}
  \label{eq:tuvu0}
  a_h(\tuvu[h]^0,\uvv[h])
  = l_h^0(\uvv[h]) - b_h(\uvv[h], \tph^0)\qquad
  \forall\uvv[h]\in\UhD.
\end{equation}
If $c_0=0$, the value of $\tph^0$ is only needed to enforce the initial condition on the displacement while, if $c_{0}>0$, we also set $p_{h}^{0}=\tph^{0}$ to initialize the discrete pressure.

\begin{remark}[Discrete compatibility condition for $c_0=0$]\label{rem:comp.cond}
  Also when $c_0=0$ it is possible to take the test function $q_{h}$ in~\eqref{eq:biot.h:flow} in the full space $\Poly{k}(\Th)$ instead of the zero-average subspace $\Poly[d,0]{k}(\Th)$, since the compatibility condition is verified at the discrete level.
  To check it, it suffices to let $q_{h}=\charac{\Omega}$ in~\eqref{eq:biot.h:flow}, observe that the right-hand side is equal to zero since $g^{n}$ has zero average on $\Omega$ (cf.~\eqref{eq:compatibility}), and use the definition~\eqref{eq:ch} of $c_{h}$ together with~\eqref{eq:bh.const} to prove that the left-hand side also vanishes.
  This remark is crucial to ensure the local conservation properties of the method detailed in Section~\ref{sec:flux.form}.
\end{remark}

\section{Stability analysis}\label{sec:stability}

In this section we study the stability of problem~\eqref{eq:biot.h} and prove its well-posedness.
We recall the following discrete Gronwall's inequality, which is a minor variation of~\cite[Lemma~5.1]{Heywood.Rannacher:90}.

\begin{lemma}[Discrete Gronwall's inequality]\label{lem:disc.gronwall}
  Let an integer N and reals $\delta,G>0$, and $K\ge 0$ be given, and let $(a^n)_{0\le n\le N}$, $(b^n)_{0\le n\le N}$, and $(\gamma^n)_{0\le n\le N}$ denote three sequences of nonnegative real numbers such that, for all $0\le n\le N$
  $$
  a^n + \delta\sum_{m=0}^n b^m + K \le \delta\sum_{m=0}^n\gamma^ma^m + G.
  $$
  Then, if $\gamma^m\delta<1$ for all $0\le m\le N$, letting $\varsigma^m\eqbydef (1-\gamma^m\delta)^{-1}$, it holds, for all $0\le n\le N$,
  \begin{equation}
    \label{eq:disc.gronwall}
    a^n + \delta\sum_{m=0}^n b^m + K \le \exp\left(
    \delta\sum_{m=0}^n\varsigma^m\gamma^m
    \right)\times G.
  \end{equation}
\end{lemma}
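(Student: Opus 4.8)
The plan is to turn the cumulative hypothesis into a one-step recursion and then integrate it by induction, the exponential arising from a single elementary convexity estimate. Write $A^n\eqbydef a^n+\delta\sum_{m=0}^n b^m+K$ for the full left-hand side, so that the assumption reads $A^n\le\delta\sum_{m=0}^n\gamma^m a^m+G$ and the goal is $A^n\le E^n G$ with $E^n\eqbydef\exp\!\big(\delta\sum_{m=0}^n\varsigma^m\gamma^m\big)$. The first step is to isolate the top summand $m=n$ on the right: since every term dropped from the left is nonnegative we have $a^m\le A^m$ for all $m$, and because $\gamma^m,\delta\ge 0$ we may replace each $a^m$ by $A^m$, giving
\begin{equation*}
A^n\le\delta\gamma^n A^n+\delta\sum_{m=0}^{n-1}\gamma^m A^m+G.
\end{equation*}
As $\delta\gamma^n<1$, the coefficient $1-\delta\gamma^n$ is positive and multiplying by $\varsigma^n=(1-\delta\gamma^n)^{-1}$ yields the recursion
\begin{equation*}
A^n\le\varsigma^n\,\delta\sum_{m=0}^{n-1}\gamma^m A^m+\varsigma^n G.
\end{equation*}

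The crux is the elementary inequality $\varsigma^m\le\exp(\delta\gamma^m\varsigma^m)$, which is exactly what upgrades the naive product bound $\prod_m\varsigma^m$ into the claimed exponential. It follows at once from the algebraic identity $\varsigma^m=1+\delta\gamma^m\varsigma^m$ (obtained by expanding $\varsigma^m(1-\delta\gamma^m)=1$) combined with $1+y\le e^y$ applied to $y=\delta\gamma^m\varsigma^m\ge 0$.

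I would then conclude by induction on $n$. A convenient auxiliary claim, proved first by its own induction, is
\begin{equation*}
1+\delta\sum_{m=0}^{n-1}\gamma^m E^m\le E^{n-1}\qquad\text{for all }n\ge 1,
\end{equation*}
whose inductive step reduces, via $E^n=E^{n-1}\exp(\delta\varsigma^n\gamma^n)$, precisely to the scalar estimate of the previous paragraph. Granting this, the main induction is immediate: assuming $A^m\le E^m G$ for all $m\le n-1$ and feeding it into the recursion gives
\begin{equation*}
A^n\le\varsigma^n G\Big(1+\delta\sum_{m=0}^{n-1}\gamma^m E^m\Big)\le\varsigma^n G\,E^{n-1}\le E^n G,
\end{equation*}
where the last inequality is once more $\varsigma^n E^{n-1}\le E^{n-1}\exp(\delta\varsigma^n\gamma^n)=E^n$. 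The base case $n=0$ is the same computation with the sums empty, reducing to $A^0\le\varsigma^0 G\le E^0 G$.

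The routine parts are the rearrangement in the first step and the index bookkeeping in the two nested inductions; the only genuinely nontrivial ingredient is the scalar estimate $\varsigma^m\le\exp(\delta\gamma^m\varsigma^m)$, so I expect the main (and rather minor) obstacle to be organizing the inductions cleanly around it rather than any hard analysis.
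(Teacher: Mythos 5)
The paper does not prove this lemma at all: it is quoted as a minor variation of Lemma~5.1 in Heywood--Rannacher and used as a black box, so there is no internal proof to compare against. Your argument is correct and complete---the absorption of the top term via $\varsigma^n=(1-\gamma^n\delta)^{-1}$, the scalar estimate $\varsigma^m\le\exp(\delta\gamma^m\varsigma^m)$ from $1+y\le e^y$, and the two nested inductions all check out---and it is essentially the standard proof of such implicit discrete Gronwall inequalities, i.e., the same route as the cited reference.
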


\begin{lemma}[A priori bounds] \label{lem:a-priori}
  Assume $\vf\in C^1(L^2(\Omega)^d)$ and $g\in C^0(L^2(\Omega))$, and let $(\uvu[h]^{0},p_{h}^{0})=(\tuvu[h]^{0},\tph^{0})$ with $(\tuvu[h]^{0},\tph^{0})$ defined as in Section~\ref{sec:discrete}.
  For all $1\le n\le N$, denote by $(\uvu[h]^n,p_h^n)$ the solution to~\eqref{eq:biot.h}.
  Then, for $\tau$ small enough, it holds that
  \begin{multline}\label{eq:a-priori}
  \norm[a,h]{\uvu[h]^N}^2
    + \norm{c_{0}^{\nicefrac12}p_{h}^{N}}^{2}
    + \frac{1}{2\mu+d\lambda}\norm{p_h^N-\overline{p}_h^N}^2
    + \sum_{n=1}^N \tau \norm[c,h]{p_h^n}^2
    \lesssim
    \left((2\mu)^{-1}+c_0\right)\norm{p^0}^2
    \\
    + (2\mu)^{-1}d_{\Omega}^{2}\norm[C^1(L^2(\Omega)^d)]{\vf}^2
    + (2\mu+d\lambda)\tF^{2}\norm[C^0(L^2(\Omega))]{g}^2    
    + c_0^{-1}\tF^{2}\norm[C^0(L^2(\Omega))]{\overline{g}}^2,
  \end{multline}
  with the convention that $c_0^{-1}\norm[C^0(L^2(\Omega))]{\overline{g}}^2=0$ if $c_{0}=0$ and, 
  for $0\le n\le N$, $\overline{p}_h^n \eqbydef (p_h^n,1)$.
\end{lemma}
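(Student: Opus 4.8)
The plan is to derive a discrete energy identity through a judicious choice of test functions, estimate the data terms in a way that is robust with respect to $c_0$, $\lambda$ and $\diff$, and close the argument using the inf-sup condition of Lemma~\ref{lem:inf-sup} together with the discrete Gronwall inequality of Lemma~\ref{lem:disc.gronwall}. First I would fix $1\le m\le N$, test the mechanics equation~\eqref{eq:biot.h:mech} at time $t^n$ with $\uvv[h]=\ddt\uvu[h]^n\in\UhD$ and the flow equation~\eqref{eq:biot.h:flow} with $q_h=p_h^n$, and add the two relations: the coupling terms $\pm\,b_h(\ddt\uvu[h]^n,p_h^n)$ cancel, leaving $a_h(\uvu[h]^n,\ddt\uvu[h]^n)+(c_0\ddt p_h^n,p_h^n)+\norm[c,h]{p_h^n}^2=l_h^n(\ddt\uvu[h]^n)+(g^n,p_h^n)$. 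Exploiting the symmetry of $a_h$ and of $(c_0\,\cdot,\cdot)$ through the identity $2(x^n,x^n-x^{n-1})=\norm{x^n}^2-\norm{x^{n-1}}^2+\norm{x^n-x^{n-1}}^2$, multiplying by $\tau$ and summing over $n=1,\dots,m$ telescopes the energy and, after dropping the nonnegative increment terms, yields
\[
\frac12\norm[a,h]{\uvu[h]^m}^2+\frac{c_0}2\norm{p_h^m}^2+\sum_{n=1}^m\tau\norm[c,h]{p_h^n}^2\le\frac12\norm[a,h]{\uvu[h]^0}^2+\frac{c_0}2\norm{p_h^0}^2+\sum_{n=1}^m\Big(l_h^n(\uvu[h]^n-\uvu[h]^{n-1})+\tau(g^n,p_h^n)\Big).
\]

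Next I would estimate the two data sums. For the mechanical load, I would rewrite $l_h^n(\uvu[h]^n-\uvu[h]^{n-1})=(\vf^n,\vu[h]^n-\vu[h]^{n-1})$ and perform a discrete integration by parts (Abel summation) in $n$ to transfer the increment onto $\vf$; this is precisely what requires the $C^1$-in-time regularity of $\vf$. Bounding the boundary term $(\vf^m,\vu[h]^m)$ and the remaining sum $\sum_n\tau(\ddt\vf^{n+1},\vu[h]^n)$ by the discrete Korn inequality~\eqref{eq:korn} and the coercivity in~\eqref{eq:ah.coer}, followed by Young's inequality, I would absorb a fraction of $\norm[a,h]{\uvu[h]^m}^2$ into the left-hand side, produce the $(2\mu)^{-1}d_\Omega^2\norm[C^1(L^2(\Omega)^d)]{\vf}^2$ contribution, and leave behind a remainder of the form $\sum_n\tau\norm[a,h]{\uvu[h]^n}^2$. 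For the fluid source, I would split $p_h^n$ into its mean value and its zero-average part $p_h^n-\overline{p}_h^n$. The zero-average part I would control \emph{not} through the seminorm $\norm[c,h]{\cdot}$, but by applying the inf-sup condition~\eqref{eq:inf-sup} to $q_h=p_h^n-\overline{p}_h^n$ and substituting~\eqref{eq:biot.h:mech} (using $b_h(\uvv[h],\overline{p}_h^n)=0$ from~\eqref{eq:bh.const}), which bounds $\norm{p_h^n-\overline{p}_h^n}$ by $d_\Omega\norm{\vf^n}+(2\mu+d\lambda)^{\nicefrac12}\norm[a,h]{\uvu[h]^n}$ with constants independent of $\diff$; a discrete Cauchy--Schwarz and Young in $n$ together with $\sum_n\tau\le\tF$ then deliver the $(2\mu+d\lambda)\tF^2\norm[C^0(L^2(\Omega))]{g}^2$ term and a further $\sum_n\tau\norm[a,h]{\uvu[h]^n}^2$ remainder. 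The mean value I would handle by testing~\eqref{eq:biot.h:flow} with $\charac{\Omega}$: since $b_h(\ddt\uvu[h]^n,\charac{\Omega})=0$ and $c_h(p_h^n,\charac{\Omega})=0$, this gives the scalar recursion $c_0\ddt(p_h^n,1)=(g^n,1)$, yielding the $c_0^{-1}\tF^2\norm[C^0(L^2(\Omega))]{\overline{g}}^2$ term when $c_0>0$ and vanishing identically when $c_0=0$ by the compatibility condition~\eqref{eq:compatibility}.

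I would then collect all remainders of the form $\sum_n\tau\norm[a,h]{\uvu[h]^n}^2$ and invoke Lemma~\ref{lem:disc.gronwall} with $a^n=\norm[a,h]{\uvu[h]^n}^2$, $\delta=\tau$ and bounded $\gamma^n$, the hypothesis $\gamma^m\tau<1$ being exactly the ``$\tau$ small enough'' assumption and $\sum_n\tau\gamma^n\lesssim\tF$ rendering the exponential factor harmless; this bounds $\norm[a,h]{\uvu[h]^N}^2$, $c_0\norm{p_h^N}^2$ and $\sum_n\tau\norm[c,h]{p_h^n}^2$. To recover the remaining left-hand side term $\frac1{2\mu+d\lambda}\norm{p_h^N-\overline{p}_h^N}^2$, I would apply~\eqref{eq:inf-sup} once more at $n=N$ and substitute~\eqref{eq:biot.h:mech} to get $\norm{p_h^N-\overline{p}_h^N}\lesssim d_\Omega\norm{\vf^N}+(2\mu+d\lambda)^{\nicefrac12}\norm[a,h]{\uvu[h]^N}$; squaring and dividing by $(2\mu+d\lambda)$, the weight is exactly what tames the $\lambda$-growth, and the already-bounded $\norm[a,h]{\uvu[h]^N}^2$ closes this term. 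Finally, the initial contributions are handled by testing~\eqref{eq:tuvu0} with $\tuvu[h]^0$ and using~\eqref{eq:korn},~\eqref{eq:bh.cont} and~\eqref{eq:ah.coer} to get $\norm[a,h]{\uvu[h]^0}^2\lesssim(2\mu)^{-1}(d_\Omega^2\norm{\vf^0}^2+\norm{\tph^0}^2)$, together with $c_0\norm{p_h^0}^2=c_0\norm{\tph^0}^2\le c_0\norm{p^0}^2$ and the $L^2$-stability $\norm{\tph^0}\le\norm{p^0}$ of $\tph^0=\lproj{k}p^0$, which yields the $((2\mu)^{-1}+c_0)\norm{p^0}^2$ contribution.

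The main obstacle is the robust treatment of the fluid-source sum $\sum_n\tau(g^n,p_h^n)$ uniformly in $c_0$ and $\diff$. The natural estimate through the coercivity seminorm $\norm[c,h]{\cdot}$ would introduce a forbidden factor $\ldiff^{-1}$; avoiding it forces one to route the zero-average pressure through the inf-sup condition and the discrete momentum balance instead, at the price of the weight $(2\mu+d\lambda)$. Simultaneously, the mean of the pressure must be isolated and governed by $c_0^{-1}$ when $c_0>0$ while disappearing altogether when $c_0=0$, and all the $\sum_n\tau\norm[a,h]{\uvu[h]^n}^2$ remainders must be arranged so that the discrete Gronwall inequality closes the estimate with constants independent of $h$, $\tau$, $c_0$, $\lambda$, $\mu$ and $\diff$.
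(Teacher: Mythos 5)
Your proposal is correct and follows essentially the same route as the paper's own proof: the same energy identity obtained by testing the mechanics with $\tau\ddt\uvu[h]^n$ and the flow with $\tau p_h^n$, the same discrete integration by parts (Abel summation) in time for the load term with Korn's inequality~\eqref{eq:korn} and~\eqref{eq:ah.coer}, the same splitting of the source term with the zero-average pressure part routed through the inf-sup condition~\eqref{eq:inf-sup} and the discrete momentum balance, and the same conclusion via the discrete Gronwall inequality with initial data controlled as in~\eqref{eq:ic.stab}. The only deviations are harmless bookkeeping: you bound the pressure-mean contribution by explicitly solving the scalar recursion $c_0\ddt\overline{p}_h^n=\overline{g}^n$ obtained by taking $q_h=\charac{\Omega}$ (the paper instead applies Cauchy--Schwarz and Young to $\term_2$ and absorbs $\sum_{n}\tau\norm{c_0^{\nicefrac12}p_h^n}^2$ through Gronwall, which is why its Gronwall variable is $a^n=\norm[a,h]{\uvu[h]^n}^2+4\norm{c_0^{\nicefrac12}p_h^n}^2$ rather than your $\norm[a,h]{\uvu[h]^n}^2$), and you recover the term $\frac{1}{2\mu+d\lambda}\norm{p_h^N-\overline{p}_h^N}^2$ by a second application of the inf-sup bound after Gronwall, whereas the paper inserts it into the left-hand side beforehand and carries it through Gronwall as the constant $K$.
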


\begin{remark}[Well-posedness]
Owing to linearity, the well-posedness of~\eqref{eq:biot.h} is an immediate consequence of Lemma~\ref{lem:a-priori}.
\end{remark}

\begin{remark}[A priori bound for $c_{0}=0$]
  When $c_{0}=0$, the choice~\eqref{eq:Ph} of the discrete space for the pressure ensures that $\overline{p}_{h}^{n}=0$ for all $0\le n\le N$. 
  Thus, the third term in the left-hand side of~\eqref{eq:a-priori} yields an estimate on $\norm{p_{h}^{N}}^{2}$, and the a priori bound reads
    \begin{multline}
    \norm[a,h]{\uvu[h]^N}^2
    + \frac{1}{2\mu+d\lambda}\norm{p_h^N}^2
    + \sum_{n=1}^N \tau \norm[c,h]{p_h^n}^2
    \lesssim 
    \\
    (2\mu)^{-1}\left(
      d_{\Omega}^{2}\norm[C^1(L^2(\Omega)^d)]{\vf}^2 + \norm{p^0}^2 
    \right)
    + (2\mu+d\lambda)\tF^{2}\norm[C^0(L^2(\Omega))]{g}^2.
  \end{multline}
  The convention $c_0^{-1}\norm[C^0(L^2(\Omega))]{\overline{g}}^2=0$ if
$c_{0}=0$ is justified since the term $\term_{2}$ in point (4) of the following proof vanishes in this case thanks to the compatibility condition~\eqref{eq:compatibility}.
\end{remark}

\begin{proof}[Proof of Lemma~\ref{lem:a-priori}]
  Throughout the proof, $C_i$ with $i\in\Natural^*$ will denote a generic positive constant independent of $h$, $\tau$, and of the physical parameters $c_{0}$, $\lambda$, $\mu$, and $\diff$.
  \\
  (1) \emph{Estimate of $\norm{p_h^n-\overline{p}_h^n}$.} Using the inf-sup
condition~\eqref{eq:inf-sup} followed by~\eqref{eq:bh.const} to infer that
$b_{h}(\uvv[h],\overline p_{h}^{n})=0$, the mechanical equilibrium
equation~\eqref{eq:biot.h:mech}, and the second inequality
in~\eqref{eq:ah.coer}, for all $1\le n\le N$ we get
  $$
  \begin{aligned}
    \norm{p_h^n-\overline{p}_h^n}
    &\le \beta
    \sup_{\uvv[h]\in\UhD\setminus\{\underline{\vec{0}}\}}\frac{b_h(\uvv[h],p_h^n-\overline{p}_h^n)}{\norm[\epsilon,h]{\uvv[h]}}
    =
      \beta\sup_{\uvv[h]\in\UhD\setminus\{\underline{\vec{0}}\}}\frac{b_h(\uvv[h],p_h^n)}{\norm[\epsilon,h]{\uvv[h]}
    }
    \\
    &= \beta\sup_{\uvv[h]\in\UhD\setminus\{\underline{\vec{0}}\}}\frac{l_h^n(\uvv[h])-a_h(\uvu[h]^n,\uvv[h])}{\norm[\epsilon,h]{\uvv[h]}}
    \le C_1^{\nicefrac12}\left(
    d_{\Omega}\norm{\vf^n}
    +	(2\mu+d\lambda)^{\nicefrac12}\norm[a,h]{\uvu[h]^n}
    \right),
  \end{aligned}
  $$
  where we have set, for the sake of brevity, $C_{1}^{\nicefrac12}\eqbydef\beta\max(C_{\rm K},\eta)$.
  This implies, in particular,
  \begin{equation}
    \label{eq:stability:bnd.phn}
      \norm{p_h^n - \overline{p}_h^n}^2
      \le 2C_1\left(
      d_{\Omega}^{2}\norm{\vf^n}^2 + (2\mu + d\lambda)\norm[a,h]{\uvu[h]^n}^2
      \right)
  \end{equation}
  \\
  (2) \emph{Energy balance.}
  Adding~\eqref{eq:biot.h:mech} with $\uvv[h]=\tau\ddt\uvu[h]^n$ to~\eqref{eq:biot.h:flow} with $q_h=\tau p_h^n$, and summing the resulting equation over $1\le n\le N$, it is inferred
  \begin{equation}
    \label{eq:stability:1}
    \sum_{n=1}^N \tau a_h(\uvu[h]^n,\ddt\uvu[h]^n)
    + \sum_{n=1}^N \tau (c_0\ddt p_h^n, p_h^n)
    + \sum_{n=1}^N \tau \norm[c,h]{p_h^n}^2
    = \sum_{n=1}^N \tau l_h^n(\ddt\uvu[h]^n)
    + \sum_{n=1}^N \tau (g^n,p_h^n).
  \end{equation}
  We denote by $\mathcal{L}$ and $\mathcal{R}$ the left- and right-hand side of~\eqref{eq:stability:1} and proceed to find suitable lower and upper bounds, respectively.
  \\
  (3) \emph{Lower bound for $\mathcal{L}$.}
    Using twice the formula 
    \begin{equation}\label{eq:magic:BE}
    2x(x-y) = x^2 + (x-y)^2 - y^2,
    \end{equation} and telescoping out the appropriate summands, the first two terms in the left-hand side of~\eqref{eq:stability:1} can be rewritten as, respectively,
  \begin{equation}
    \label{eq:stability:2}
    \begin{aligned}
      \sum_{n=1}^N \tau a_h(\uvu[h]^n,\ddt\uvu[h]^n)
      &=\frac12\norm[a,h]{\uvu[h]^N}^2
      + \frac12\sum_{n=1}^N\tau^2\norm[a,h]{\ddt\uvu[h]^n}^2
      -\frac12\norm[a,h]{\uvu[h]^0}^2,
      \\
      \sum_{n=1}^N \tau (c_0\ddt p_h^n, p_h^n)
      &=\frac12\norm{c_0^{\nicefrac12}p_h^N}^2
      +\frac12\sum_{n=1}^N\tau^2\norm{c_{0}^{\nicefrac12}\ddt p_h^n}^2
      -\frac12\norm{c_0^{\nicefrac12}p_h^0}^2.
    \end{aligned}
  \end{equation}
  Using the above relation together with~\eqref{eq:stability:bnd.phn} and $\norm{\vf^N}\le\norm[C^1(L^2(\Omega)^d)]{\vf}$, it is inferred that
  \begin{multline}
    \label{eq:stability:bnd.L}
    \frac14\norm[a,h]{\uvu[h]^N}^2
    -\frac12\norm[a,h]{\uvu[h]^0}^2
      +\frac12\norm{c_0^{\nicefrac12}p_h^N}^2
      -\frac12\norm{c_0^{\nicefrac12}p_h^0}^2
    \\
    + \frac1{8 C_1(2\mu+d\lambda)}\norm{p_h^N - \overline{p}_h^N}^2
    + \sum_{n=1}^N \tau \norm[c,h]{p_h^n}^2
    \le
    \mathcal{L}
    + \frac{d_{\Omega}^{2}}{4(2\mu+d\lambda)}\norm[C^1(L^2(\Omega)^d)]{\vf}^2.
    \end{multline}
  \\
  (4) \emph{Upper bound for $\mathcal{R}$.}
  For the first term in the right-hand side of~\eqref{eq:stability:1}, discrete integration by parts in time yields
  \begin{equation}
    \label{eq:stability:3}
    \sum_{n=1}^N \tau l_h^n(\ddt\uvu[h]^n)
    = (\vf^N,\vu[h]^N) - (\vf^0,\vu[h]^0) - \sum_{n=1}^{N}\tau(\ddt\vf^n,\vu[h]^{n-1}),
  \end{equation}
  hence, using the Cauchy--Schwarz inequality, the discrete Korn's inequality followed by~\eqref{eq:ah.coer} to estimate $\norm{\vu[h]^n}^{2}\le \frac{C_2 d_{\Omega}^{2}}{\mu}\norm[a,h]{\uvu[h]^n}^{2}$ for all $1\le n\le N$ (with $C_{2}\eqbydef C_{\rm K}^{2}\eta/2$), and Young's inequality, one has
  \begin{equation}
    \label{eq:stability:bnd.R1}
    \begin{aligned}
      \left|\sum_{n=1}^N \tau l_h^n(\ddt\uvu[h]^n)\right|
      &\le\frac18\left(
      \norm[a,h]{\uvu[h]^N}^2
      + \norm[a,h]{\uvu[h]^0}^2
      + \frac1{2\tF}\sum_{n=1}^{N}\tau\norm[a,h]{\uvu[h]^{n-1}}^2
      \right)
      \\
      &\qquad
      + \frac{C_2 d_{\Omega}^{2}}{\mu}\left(
      \norm{\vf^N}^2
      + \norm{\vf^0}^2
      + 2\tF\sum_{n=1}^N\tau\norm{\ddt\vf^n}^2        
      \right)
      \\
      &\le
      \frac18\left(
      \norm[a,h]{\uvu[h]^N}^2
      + \norm[a,h]{\uvu[h]^0}^2
      + \frac1{2\tF}\sum_{n=0}^{N}\tau\norm[a,h]{\uvu[h]^n}^2
      \right)
      + \frac{C_2C_3d_{\Omega}^{2}}{\mu}\norm[C^1(L^2(\Omega)^d)]{\vf}^2,
    \end{aligned}
  \end{equation}
  where we have used the classical bound $\norm{\vf^N}^2  + \norm{\vf^0}^2 + 2\tF\sum_{n=1}^N\tau\norm{\ddt\vf^n}^2\le C_3\norm[C^1(L^2(\Omega)^d)]{\vf}^2$ to conclude.
  We proceed to estimate the second term in the right-hand side of~\eqref{eq:stability:1} by splitting it into two contributions as follows (here, $\overline{g}^{n}\eqbydef(g^{n},1)$):
\begin{equation}
  \label{eq:stability:split.g}
    \sum_{n=1}^N \tau (g^n,p_h^n) = 
    \sum_{n=1}^N \tau (g^n,p_h^n-\overline{p}_h^n)
    + \sum_{n=1}^N \tau (\overline{g}^n,p_h^n) \eqbydef \term_1 + \term_2.
\end{equation} 
Using the Cauchy--Schwarz inequality, the bound $\sum_{n=1}^N\tau\norm{g^n}^2\le\tF\norm[C^0(L^2(\Omega))]{g}^2$ together with~\eqref{eq:stability:bnd.phn} and Young's inequality, it is inferred that
  \begin{equation}
    \label{eq:stability:bnd.R2}
    \begin{aligned}
      \left|\term_1\right|
      &\le \left\{
        \sum_{n=1}^N\tau\norm{g^n}^2
      \right\}^{\nicefrac12}\times\left\{
      \sum_{n=1}^N\tau\norm{p_h^n-\overline{p}_h^n}^2
      \right\}^{\nicefrac12}
      \\
      &\le
        \tF\norm[C^0(L^2(\Omega))]{g}\times
        \left\{
          \frac{2C_{1}}{\tF}\sum_{n=1}^N\tau\left(d_{\Omega}^{2}\norm{\vf^n}^2
          + (2\mu+d\lambda)\norm[a,h]{\uvu[h]^n}^2\right)
        \right\}^{\nicefrac12}
      \\
      &\le 
        8C_1\tF^{2}(2\mu+d\lambda)\norm[C^0(L^2(\Omega))]{g}^2
        + \frac{d_{\Omega}^{2}}{16(2\mu+d\lambda)}\norm[C^1(L^2(\Omega)^{d})]{\vf}^2
        + \frac{1}{16\tF}\sum_{n=1}^N\tau\norm[a,h]{\uvu[h]^n}^2.
    \end{aligned}
  \end{equation}
  Owing the compatibility condition~\eqref{eq:compatibility}, $\term_2=0$ if $c_0=0$.
  If $c_{0}>0$, using the Cauchy--Schwarz and Young's inequalities, we have
  \begin{equation}  
    \label{eq:stability:bnd.R2b}
      \left|\term_2\right|
      \le \left\{
        \tF\sum_{n=1}^N\tau c_0^{-1}\norm{\overline{g}^n}^2
      \right\}^{\nicefrac12}\hspace{-1ex}\times\left\{
        \tF^{-1}\sum_{n=1}^N\tau \norm{c_0^{\nicefrac12}p_h^n}^2
      \right\}^{\nicefrac12}\hspace{-1ex}
      \le
      \frac{\tF^{2}}{2c_0}\norm[C^0(L^2(\Omega))]{\overline{g}}^2
      +\frac1{2\tF}\sum_{n=1}^N\tau\norm{c_0^{\nicefrac12}p_h^n}^2.
  \end{equation}
Using~\eqref{eq:stability:bnd.R1},~\eqref{eq:stability:bnd.R2}, and~\eqref{eq:stability:bnd.R2b}, we infer
  \begin{multline}
    \label{eq:stability:bnd.R}
    \mathcal{R}
    \le\frac18\left(
    \norm[a,h]{\uvu[h]^N}^2
    + \tF^{-1}\sum_{n=0}^{N}\tau\norm[a,h]{\uvu[h]^n}^2
    +  \norm[a,h]{\uvu[h]^0}^2
    \right)
    +\frac1{2\tF}\sum_{n=1}^N\tau\norm{c_0^{\nicefrac12}p_h^n}^2
    + \frac{\tF^{2}}{2c_0}\norm[C^0(L^2(\Omega))]{\overline{g}}^2
    \\
    + 8C_1\tF^{2}(2\mu+d\lambda)\norm[C^0(L^2(\Omega))]{g}^2
    + \left(
      \frac{1}{16(2\mu+d\lambda)}+\frac{C_2C_3}{\mu}
    \right)d_{\Omega}^{2}\norm[C^1(L^2(\Omega)^d)]{\vf}^2.
  \end{multline}
  \\
  (5) \emph{Conclusion.}
  Using~\eqref{eq:stability:bnd.L}, the fact that $\mathcal{L}=\mathcal{R}$ owing to~\eqref{eq:stability:1}, and~\eqref{eq:stability:bnd.R}, it is inferred that
  \begin{multline}
    \label{eq:stability:5}
    \norm[a,h]{\uvu[h]^N}^2
    + 4\norm{c_0^{\nicefrac12}p_h^N}^2
    + \frac1{(2\mu+d\lambda)}\norm{p_h^N - \overline{p}_h^N}^2
    + 8\sum_{n=1}^N \tau \norm[c,h]{p_h^n}^2
    \le
    \\
    \frac{C_4}{\tF} \sum_{n=0}^{N}\tau\norm[a,h]{\uvu[h]^n}^2
    + \frac{C_4}{\tF}\sum_{n=1}^N\tau4\norm{c_0^{\nicefrac12}p_h^n}^2\
    + G,
  \end{multline}
  where $C_4 \eqbydef \max(1,C_1)$ while, observing that $\norm{c_0^{\nicefrac12}p_h^0}\le\norm{c_0^{\nicefrac12}p^0}$ since $\lproj{k}$ is a bounded operator, and that it follows from~\eqref{eq:ic.stab} below that $\norm[a,h]{\uvu^{0}}^{2}\le C_5 (2\mu)^{-1}\left(
      d_{\Omega}^{2}\norm{\vf^{0}}^{2} + \norm{p^{0}}^{2}\right)$,
  \begin{multline*}
  C_4^{-1} G
  \eqbydef 
  \frac{5C_5}{2\mu}\left( d_{\Omega}^{2}\norm{\vf^0}^2 + \norm{p^0}^2 \right)
  + 4\norm{c_0^{\nicefrac12}p^0}^2
  + \frac{4\tF^{2}}{c_0}\norm[C^0(L^2(\Omega))]{\overline{g}}^2
  \\
  + 64 C_1\tF^{2}(2\mu+d\lambda)\norm[C^0(L^2(\Omega))]{g}^2
  + \left(
  \frac{5}{2(2\mu+d\lambda)}+\frac{8C_2C_3}{\mu}
  \right)d_{\Omega}^{2}\norm[C^1(L^2(\Omega)^d)]{\vf}^2.
  \end{multline*}
  Using Gronwall's Lemma~\ref{lem:disc.gronwall} with $a^{0}\eqbydef\norm[a,h]{\uvu[h]^{0}}^{2}$ and $a^n\eqbydef\norm[a,h]{\uvu[h]^n}^2+ 4\norm{c_0^{\nicefrac12}p_h^n}^2$ for $1\le n\le N$, $\delta\eqbydef\tau$, $b^{0}\eqbydef 0$ and $b^n\eqbydef \norm[c,h]{p_h^n}^2$ for $1\le n\le N$, $K=\frac{1}{(2\mu+d\lambda)}\norm{p_h^N-\overline{p}_h^N}^2$, and $\gamma^n=\frac{C_{4}}{\tF}$, the desired result follows.
\end{proof}

\begin{proposition}[Stability and approximation properties for $\tuvu^{0}$]
  The initial displacement~\eqref{eq:tuvu0} satisfies the following stability condition:
  \begin{equation}\label{eq:ic.stab}
    \norm[a,h]{\tuvu^{0}}\lesssim (2\mu)^{-\nicefrac12}\left(
      d_{\Omega}\norm{\vf^{0}} + \norm{p^{0}}
    \right).
  \end{equation}
  Additionally, recalling the global reduction map $\Ih$ defined by~\eqref{eq:Ih}, and assuming the additional regularity $p_0\in H^{k+1}(P_\Omega)$, $\vu^{0}\in H^{k+2}(P_\Omega)^d$, and $\DIV\vu^{0}\in H^{k+1}(P_\Omega)$, it holds
  \begin{equation}\label{eq:tuvu0.approx}
    (2\mu)^{\nicefrac12}\norm[a,h]{\tuvu[h]^{0}-\Ih\vu^{0}}
    \lesssim h^{k+1}\left(
    2\mu\norm[H^{k+2}(P_\Omega)^d]{\vu^{0}}
    + \lambda\norm[H^{k+1}(P_\Omega)]{\DIV\vu^{0}}
    + \ar^{\nicefrac12}\norm[H^{k+1}(P_\Omega)]{p^{0}}
    \right).
  \end{equation}
\end{proposition}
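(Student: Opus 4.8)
To establish the stability bound~\eqref{eq:ic.stab}, I would simply test the defining relation~\eqref{eq:tuvu0} with $\uvv[h]=\tuvu[h]^0$, which gives $\norm[a,h]{\tuvu[h]^0}^2=(\vf^0,\vu[h]^0)-b_h(\tuvu[h]^0,\tph^0)$. The first term is controlled by Cauchy--Schwarz followed by the discrete Korn inequality~\eqref{eq:korn}, namely $|(\vf^0,\vu[h]^0)|\le C_{\rm K}d_{\Omega}\norm{\vf^0}\,\norm[\epsilon,h]{\tuvu[h]^0}$, and the second by the continuity~\eqref{eq:bh.cont} of $b_h$ together with the $L^2$-stability of $\lproj{k}$ (so that $\norm{\tph^0}\le\norm{p^0}$), yielding $|b_h(\tuvu[h]^0,\tph^0)|\lesssim\norm[\epsilon,h]{\tuvu[h]^0}\norm{p^0}$. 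Inserting $\norm[\epsilon,h]{\tuvu[h]^0}\le\eta^{\nicefrac12}(2\mu)^{-\nicefrac12}\norm[a,h]{\tuvu[h]^0}$ from the coercivity in~\eqref{eq:ah.coer} and dividing by $\norm[a,h]{\tuvu[h]^0}$ gives~\eqref{eq:ic.stab}.

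For the approximation estimate~\eqref{eq:tuvu0.approx}, set $\uve[h]^0\eqbydef\tuvu[h]^0-\Ih\vu^0$, which belongs to $\UhD$ since $\vu^0\in H_0^1(\Omega)^d$ forces the boundary-face components of $\Ih\vu^0$ to vanish. Writing $\norm[a,h]{\uve[h]^0}^2=a_h(\uve[h]^0,\uve[h]^0)$, substituting $a_h(\tuvu[h]^0,\uve[h]^0)=(\vf^0,\ve[h]^0)-b_h(\uve[h]^0,\tph^0)$ from~\eqref{eq:tuvu0}, and replacing $\vf^0=-\DIV\ms(\vu^0)+\GRAD p^0$ by the strong form of the mechanical equilibrium at $t=0$, I would arrive at
\begin{equation*}
  \norm[a,h]{\uve[h]^0}^2
  = \underbrace{\big(-(\DIV\ms(\vu^0),\ve[h]^0)-a_h(\Ih\vu^0,\uve[h]^0)\big)}_{\mathfrak{T}_1}
  + \underbrace{\big((\GRAD p^0,\ve[h]^0)-b_h(\uve[h]^0,\tph^0)\big)}_{\mathfrak{T}_2}.
\end{equation*}
The term $\mathfrak{T}_1$ is precisely the quantity controlled by the elasticity consistency~\eqref{eq:ah.consist} with $\vw=\vu^0$ and $\uvv[h]=\uve[h]^0$, so $|\mathfrak{T}_1|\lesssim h^{k+1}\big(2\mu\norm[H^{k+2}(P_\Omega)^d]{\vu^0}+\lambda\norm[H^{k+1}(P_\Omega)]{\DIV\vu^0}\big)\norm[\epsilon,h]{\uve[h]^0}$.

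The heart of the proof is the coupling consistency term $\mathfrak{T}_2$, for which I would establish $|\mathfrak{T}_2|\lesssim h^{k+1}\norm[H^{k+1}(P_\Omega)]{p^0}\norm[\epsilon,h]{\uve[h]^0}$. Since $\DT\uve[T]^0\in\Poly{k}(T)$, the $L^2$-orthogonality of $\lproj{k}$ lets me replace $\tph^0=\lproj{k}p^0$ by $p^0$ in $b_h(\uve[h]^0,\tph^0)=-\sum_{T\in\Th}(\DT\uve[T]^0,\lproj[T]{k}p^0)_T$. Expanding $\DT$ via~\eqref{eq:DT} and integrating $(\GRAD p^0,\ve[T]^0)_T$ by parts on each element, the cell contributions combine into $(\DIV\ve[T]^0,\lproj[T]{k}p^0-p^0)_T$, which vanishes because $\DIV\ve[T]^0\in\Poly{k-1}(T)$ is orthogonal to $p^0-\lproj[T]{k}p^0$. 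Using that $p^0\in H^1(\Omega)$ has single-valued interface traces and that $\ve[F]^0=\vec 0$ on $\Fhb$, the remaining face terms carrying $p^0$ telescope to zero across interfaces, leaving
\begin{equation*}
  \mathfrak{T}_2 = -\sum_{T\in\Th}\sum_{F\in\Fh[T]}\big((\ve[F]^0-\ve[T]^0)\SCAL\normal_{TF},\,p^0-\lproj[T]{k}p^0\big)_F.
\end{equation*}
A face-wise Cauchy--Schwarz, the trace approximation bound $\norm[F]{p^0-\lproj[T]{k}p^0}\lesssim h_T^{k+\nicefrac12}\seminorm[H^{k+1}(T)]{p^0}$ (from~\eqref{eq:approx.lproj} and a trace inequality), the mesh regularity $h_F\simeq h_T$, and a discrete Cauchy--Schwarz over the pairs $(T,F)$ then reduce the estimate to $\sum_{T,F}h_F^{-1}\norm[F]{\ve[F]^0-\ve[T]^0}^2\le\norm[\epsilon,h]{\uve[h]^0}^2$, which gives the claimed bound on $\mathfrak{T}_2$.

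Finally, collecting the bounds on $\mathfrak{T}_1$ and $\mathfrak{T}_2$, factoring out $\norm[\epsilon,h]{\uve[h]^0}$, using $\norm[\epsilon,h]{\uve[h]^0}\le\eta^{\nicefrac12}(2\mu)^{-\nicefrac12}\norm[a,h]{\uve[h]^0}$ from~\eqref{eq:ah.coer}, dividing by $\norm[a,h]{\uve[h]^0}$, and multiplying by $(2\mu)^{\nicefrac12}$ yields~\eqref{eq:tuvu0.approx}; the pressure contribution appears as $\norm[H^{k+1}(P_\Omega)]{p^0}$, which is bounded by $\ar^{\nicefrac12}\norm[H^{k+1}(P_\Omega)]{p^0}$ since $\ar\ge1$, matching the form used in the companion convergence analysis. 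I expect the main obstacle to be the manipulation of $\mathfrak{T}_2$: recasting the coupling consistency error into a form involving the face residual $\ve[F]^0-\ve[T]^0$ rather than $\ve[T]^0$ alone is exactly what upgrades the naive $O(h^k)$ estimate (obtained by bounding $\norm{\ve[h]^0}$ through Korn's inequality) to the optimal order $h^{k+1}$.
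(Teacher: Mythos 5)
Your proof is correct and follows essentially the same route as the paper's: the paper establishes~\eqref{eq:ic.stab} exactly as you do (coercivity of $a_h$, Cauchy--Schwarz with the discrete Korn inequality~\eqref{eq:korn}, continuity~\eqref{eq:bh.cont} of $b_h$ with $L^2$-stability of $\lproj{k}$), and proves~\eqref{eq:tuvu0.approx} by repeating point (3) of Lemma~\ref{lem:tuvu.tph.approx} --- the same splitting into an elasticity consistency term bounded via~\eqref{eq:ah.consist} and a coupling term treated by element-wise integration by parts --- with the approximation properties~\eqref{eq:approx.lproj} of $\lproj{k}$ replacing the elliptic-projection estimate~\eqref{eq:tph.approx.L2}, which is precisely your argument. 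Your only departures are cosmetic: you test with the error itself instead of dualizing over $\uvv[h]$, and you note that the volume part of the coupling term vanishes by $L^2$-orthogonality rather than bounding it at order $h^{k+1}$; both give the same estimate.
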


\begin{proof}
  (1) \emph{Proof of~\eqref{eq:ic.stab}.}
  Using the first inequality in~\eqref{eq:ah.coer} followed by the definition~\eqref{eq:tuvu0} of $\tuvu^{0}$, we have
  $$
    \begin{aligned}
      \norm[a,h]{\tuvu^{0}}
      &\lesssim\sup_{\uvv[h]\in\UhD\setminus\{\vec{0}\}}\frac{a_h(\tuvu^{0},\uvv[h])}{(2\mu)^{\nicefrac12}\norm[\epsilon,h]{\uvv[h]}}
      \\
      &=(2\mu)^{-\nicefrac12}\sup_{\uvv[h]\in\UhD\setminus\{\vec{0}\}}
      \frac{l_h^0(\uvv[h])-b_h(\uvv[h],\lproj{k}p^{0})}{\norm[\epsilon,h]{\uvv[h]}}
      \lesssim (2\mu)^{-\nicefrac12}\left(
        d_{\Omega}\norm{\vf^0}+\norm{p^{0}}
      \right),
    \end{aligned}
  $$
    where to conclude we have used the Cauchy--Schwarz and discrete Korn's~\eqref{eq:korn} inequalities for the first term in the numerator and the continuity~\eqref{eq:bh.cont} of $b_h$ together with the $L^2(\Omega)$-stability of $\lproj{k}$ for the second.
    (2) \emph{Proof of~\eqref{eq:tuvu0.approx}.}
    The proof is analogous to that of point (3) in Lemma~\ref{lem:tuvu.tph.approx} except that we use the approximation properties~\eqref{eq:approx.lproj} of $\lproj{k}$ instead of~\eqref{eq:tph.approx.L2}.
    For this reason, elliptic regularity is not needed.
 \end{proof}%


\section{Error analysis}\label{sec:err.anal}

In this section we carry out the error analysis of the method.

\subsection{Projection}
\label{sec:err.anal:projection}
We consider the error with respect to the sequence of projections $(\tuvu[h]^n,\tph^n)_{1\le n\le N}$, of the exact solution defined as follows:
For $1\le n\le N$,  $\tph^n\in\Ph$ solves
\begin{subequations}\label{eq:projection}
\begin{equation}
  \label{eq:tph}
  c_h(\tph^n,q_h) = c_h(p^n,q_h)\qquad\forall q_h\in\Poly{k}(\Th),
\end{equation}
with the closure condition $\int_\Omega \tph^n = \int_\Omega{p^n}$.
Once $\tph^n$ has been computed, $\tuvu[h]^n\in\UhD$ solves
\begin{equation}
  \label{eq:tuvu}
  a_h(\tuvu[h]^n,\uvv[h])
  = l_h^n(\uvv[h]) - b_h(\uvv[h], \tph^n)\qquad
  \forall\uvv[h]\in\UhD.
\end{equation}
\end{subequations}
The well-posedness of problems~\eqref{eq:tph} and~\eqref{eq:tuvu} follow, respectively, from the coercivity of $c_h$ on $\Poly[d,0]{k}(\Th)$ and of $a_h$ on $\UhD$.
The projection $(\tuvu[h]^n,\tph^n)$ is chosen so that a convergence rate of $(k+1)$ in space analogous to the one derived in~\cite{Di-Pietro.Ern:15} can be proved for the $\norm[a,h]{{\cdot}}$-norm of the displacement at final time $\tF$.
To this purpose, we also need in what follows the following elliptic regularity, which holds, e.g., when $\Omega$ is convex:
There is a real number $C_{\rm ell}>0$ only depending on $\Omega$ such that, for all $\psi\in L^{2}_{0}(\Omega)$, with $L^{2}_{0}(\Omega)\eqbydef \left\{q\in L^{2}(\Omega) \st (q,1)=0 \right\}$, the unique function $\zeta\in H^1(\Omega)\cap L^{2}_{0}(\Omega)$  solution of the homogeneous Neumann problem
\begin{equation}
  \label{eq:ell.reg.prob}
  -\DIV(\diff\GRAD\zeta)=\psi\quad\text{in $\Omega$},
  \qquad
  \diff\GRAD\zeta\SCAL\normal=0\quad\text{on $\partial\Omega$},
\end{equation}
is such that 
\begin{equation}\label{eq:ell.reg}
  \norm[H^2(P_\Omega)]{\zeta}\le C_{\rm ell}\ldiff^{-\nicefrac12}\norm{\psi}.
\end{equation}
For further insight on the role of the choice~\eqref{eq:projection} and of the elliptic regularity assumption we refer to Remark~\ref{rem:projection}.

\begin{lemma}[{Approximation properties for $(\tuvu[h]^n,\tph^n)$}] \label{lem:tuvu.tph.approx}
  Let a time step $1\le n\le N$ be fixed.
  Assuming the regularity $p^{n}\in H^{k+1}(P_\Omega)$, it holds
  \begin{equation}
    \label{eq:tph.approx.en}
    \norm[c,h]{\tph^n-p^n} 
    \lesssim
    h^k \udiff^{\nicefrac12}\norm[H^{k+1}(P_{\Omega})]{p^n}.
  \end{equation}
  Moreover, recalling the global reduction map $\Ih$ defined by~\eqref{eq:Ih}, further assuming the regularity $\vu^{n}\in H^{k+2}(P_\Omega)^d$, $\DIV\vu^{n}\in H^{k+1}(P_\Omega)$, and provided that the elliptic regularity~\eqref{eq:ell.reg} holds, one has
  \begin{align}
    \label{eq:tph.approx.L2}
    \norm{\tph^n-p^n}
    &\lesssim h^{k+1}\ar^{\nicefrac12}\norm[H^{k+1}(P_\Omega)]{p^n},
    \\
    \label{eq:tuvu.approx}
    (2\mu)^{\nicefrac12}\norm[a,h]{\tuvu[h]^n-\Ih\vu^n}
    &\lesssim h^{k+1}\left(
    2\mu\norm[H^{k+2}(P_\Omega)^d]{\vu^n}
    + \lambda\norm[H^{k+1}(P_\Omega)]{\DIV\vu^n}
    + \ar^{\nicefrac12}\norm[H^{k+1}(P_\Omega)]{p^n}
    \right).
  \end{align}
  with global heterogeneity ratio $\ar\eqbydef\nicefrac{\udiff}{\ldiff}$.
\end{lemma}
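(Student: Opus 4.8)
The plan is to regard $\tph^n$ and $\tuvu[h]^n$ as elliptic projections and to establish \eqref{eq:tph.approx.en}, \eqref{eq:tph.approx.L2} and \eqref{eq:tuvu.approx} in this order, since each feeds the next. The engine throughout is the Galerkin orthogonality built into the definitions: \eqref{eq:tph} gives $c_h(\tph^n-p^n,q_h)=0$ for all $q_h\in\Poly{k}(\Th)$, while \eqref{eq:tuvu} reads $a_h(\tuvu[h]^n,\uvv[h])=l_h^n(\uvv[h])-b_h(\uvv[h],\tph^n)$ for all $\uvv[h]\in\UhD$. For the energy bound \eqref{eq:tph.approx.en} I would split $\tph^n-p^n=(\tph^n-\lproj{k}p^n)+(\lproj{k}p^n-p^n)$ and treat the polynomial part $e\eqbydef\tph^n-\lproj{k}p^n$ by testing against itself: orthogonality kills $c_h(\tph^n-p^n,e)$ (note $e\in\Poly{k}(\Th)$, and since $c_h$ annihilates constants one may take $e$ zero-average), leaving $\norm[c,h]{e}^2=c_h(p^n-\lproj{k}p^n,e)$, which \eqref{eq:ch.approx} bounds by $\udiff^{\nicefrac12}h^k\norm[H^{k+1}(P_\Omega)]{p^n}\norm[c,h]{e}$. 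The remaining contribution $\norm[c,h]{\lproj{k}p^n-p^n}$ is the energy-norm approximation error of the $L^2$-projection, controlled at the same rate by \eqref{eq:approx.lproj} and discrete trace inequalities; a triangle inequality closes the estimate.

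For the $L^2$ bound \eqref{eq:tph.approx.L2} I would run an Aubin--Nitsche duality. The closure condition in \eqref{eq:tph} forces $\psi\eqbydef\tph^n-p^n\in L^2_0(\Omega)$, so I let $\zeta$ solve the dual problem \eqref{eq:ell.reg.prob} with datum $\psi$. Consistency \eqref{eq:ch.consist} with $r=\zeta$ gives $\norm{\psi}^2=(\psi,\tph^n-p^n)=c_h(\zeta,\tph^n-p^n)$, and subtracting $\lproj{k}\zeta$ (licit by orthogonality) leaves $\norm{\psi}^2=c_h(\tph^n-p^n,\zeta-\lproj{k}\zeta)\le\norm[c,h]{\tph^n-p^n}\norm[c,h]{\zeta-\lproj{k}\zeta}$. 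The first factor is \eqref{eq:tph.approx.en}; the second gains one power of $h$ because $\zeta\in H^2(P_\Omega)$, and \eqref{eq:ell.reg} turns $\norm[H^2(P_\Omega)]{\zeta}$ into a multiple of $\ldiff^{-\nicefrac12}\norm{\psi}$. Dividing by $\norm{\psi}$ produces the $h^{k+1}$ rate.

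For \eqref{eq:tuvu.approx} I set $\uve\eqbydef\tuvu[h]^n-\Ih\vu^n\in\UhD$ and expand $\norm[a,h]{\uve}^2=a_h(\tuvu[h]^n,\uve)-a_h(\Ih\vu^n,\uve)$. Inserting \eqref{eq:tuvu} and the strong momentum balance $\vf^n=-\DIV\ms(\vu^n)+\GRAD p^n$ (from \eqref{eq:biot.strong:mech} with $\alpha=1$) splits the right-hand side into the elasticity consistency residual $-(a_h(\Ih\vu^n,\uve)+(\DIV\ms(\vu^n),\ve[h]))$, bounded by \eqref{eq:ah.consist} — this supplies exactly the $2\mu\norm[H^{k+2}(P_\Omega)^d]{\vu^n}$ and $\lambda\norm[H^{k+1}(P_\Omega)]{\DIV\vu^n}$ terms — plus the coupling residual $(\GRAD p^n,\ve[h])-b_h(\uve,\tph^n)$. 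For the latter I would insert $\lproj{k}p^n$: rewriting $-b_h(\uve,\lproj{k}p^n)$ through \eqref{eq:DT.bis} and integrating $(\GRAD p^n,\ve[h])$ by parts elementwise, the volumetric contributions cancel and only face terms in $p^n-\lproj{k}p^n$ survive (controlled at rate $h^{k+1}$ by \eqref{eq:approx.lproj} and trace inequalities), while the leftover $-b_h(\uve,\tph^n-\lproj{k}p^n)$ is bounded through \eqref{eq:bh.cont} by $\norm[\epsilon,h]{\uve}\norm{\tph^n-\lproj{k}p^n}$, and $\norm{\tph^n-\lproj{k}p^n}\lesssim h^{k+1}\ar^{\nicefrac12}\norm[H^{k+1}(P_\Omega)]{p^n}$ by \eqref{eq:tph.approx.L2} — this is where the $L^2$ pressure estimate enters. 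Coercivity \eqref{eq:ah.coer} finally converts $\norm[\epsilon,h]{\uve}$ into $(2\mu)^{-\nicefrac12}\norm[a,h]{\uve}$, which I divide out before rescaling by $(2\mu)^{\nicefrac12}$.

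The step I expect to be delicate is the heterogeneity bookkeeping in \eqref{eq:tph.approx.L2}: a crude Cauchy--Schwarz on $c_h(\tph^n-p^n,\zeta-\lproj{k}\zeta)$ pulls a factor $\udiff^{\nicefrac12}$ out of each factor and, after \eqref{eq:ell.reg}, overshoots the advertised $\ar^{\nicefrac12}$. Recovering precisely the square root of the heterogeneity ratio hinges on carrying the element-wise weights $\omega_{T_i}$ and the harmonic-mean penalty $\lambda_{\diff,F}$ of $c_h$ through both the projection-error estimate and the regularity bound, so that the local $\diff[T]$ in the numerator pairs with the $\ldiff^{-\nicefrac12}$ of the dual solution rather than with a global $\udiff^{\nicefrac12}$. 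A secondary point is checking that the coupling residual in \eqref{eq:tuvu.approx} genuinely collapses to $\norm{\tph^n-p^n}$ up to optimal approximation terms, i.e. that $b_h$ is consistent with the continuous pressure--displacement pairing on the hybrid space.
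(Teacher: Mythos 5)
Your proposal is correct and follows essentially the same path as the paper's proof: the energy estimate~\eqref{eq:tph.approx.en} via Galerkin orthogonality/best approximation with $q_h=\lproj{k}p^n$, the $L^2$ estimate~\eqref{eq:tph.approx.L2} via Aubin--Nitsche duality with the Neumann problem~\eqref{eq:ell.reg.prob} and the consistency property~\eqref{eq:ch.consist}, and the displacement estimate~\eqref{eq:tuvu.approx} by splitting into the elasticity consistency residual (bounded by~\eqref{eq:ah.consist}) plus a coupling residual that collapses to volume and face terms in $\tph^n-p^n$, bounded through~\eqref{eq:tph.approx.L2}. The heterogeneity bookkeeping you flag is resolved in the paper exactly as you suggest: the $\diff$-weights inside $\norm[c,h]{\zeta-\lproj{1}\zeta}$ are paired with the $\ldiff^{-\nicefrac12}$ from the elliptic regularity~\eqref{eq:ell.reg}, so that only the single $\udiff^{\nicefrac12}$ coming from~\eqref{eq:tph.approx.en} survives and the final constant is $\ar^{\nicefrac12}$ rather than $\udiff^{\nicefrac12}\ar^{\nicefrac12}$.
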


\begin{proof}
  (1) \emph{Proof of~\eqref{eq:tph.approx.en}.}
  By definition, we have that $\norm[c,h]{\tph^{n}-p^{n}}=\inf_{q_h\in\Poly{k}(\Th)}\norm[c,h]{q_h-p^{n}}$.
  To prove~\eqref{eq:tph.approx.en}, it suffices to take $q_h=\lproj{k}p^{n}$ in the right-hand side of the previous expression and use the approximation properties~\eqref{eq:approx.lproj} of $\lproj{k}$.
  \\
    (2) \emph{Proof of~\eqref{eq:tph.approx.L2}.} Let $\zeta\in H^1(\Omega)$ solve~\eqref{eq:ell.reg.prob} with $\psi=p^n-\tph^n$. 
  From the consistency property~\eqref{eq:ch.consist}, it follows that
  $$
  \norm{\tph^n-p^n}^2
  = -(\DIV(\diff\GRAD\zeta),\tph^n-p^n)
  = c_h(\zeta,\tph^n-p^n)
  = c_h(\zeta-\lproj{1}\zeta,\tph^n-p^n).
  $$
  Then, using the Cauchy--Schwarz inequality, the estimate~\eqref{eq:tph.approx.en} together with the approximation properties~\eqref{eq:approx.lproj} of $\lproj{1}$, and elliptic regularity, it is inferred that
  $$
  \begin{aligned}
    \norm{\tph^n-p^n}^2
    &=
    c_h(\zeta-\lproj{1}\zeta,\tph^n-p^n)
    \le\norm[c,h]{\zeta-\lproj{1}\zeta}\norm[c,h]{\tph^n-p^n}
    \\
    &\lesssim h^{k+1}\udiff^{\nicefrac12}\norm[H^2(P_\Omega)]{\zeta}\norm[H^{k+1}(P_\Omega)]{p^n}
    \lesssim h^{k+1}\ar^{\nicefrac12}\norm{\tph^n-p^n}\norm[H^{k+1}(P_\Omega)]{p^n},
  \end{aligned}
  $$
  and~\eqref{eq:tph.approx.L2} follows.
  \\
  (3) \emph{Proof of~\eqref{eq:tuvu.approx}.}
  We start by observing that
  \begin{equation}
    \label{eq:tuvu.approx:1}
    \norm[a,h]{\tuvu[h]^n-\Ih\vu^n}
    =
    \sup_{\uvv[h]\in\Uh\setminus\{\underline{\vec{0}}\}}
    \frac{a_h(\tuvu[h]^n-\Ih\vu^n,\uvv[h])}{\norm[a,h]{\uvv[h]}}
    \lesssim
    \sup_{\uvv[h]\in\Uh\setminus\{\underline{\vec{0}}\}}
    \frac{a_h(\tuvu[h]^n-\Ih\vu^n,\uvv[h])}{(2\mu)^{\nicefrac12}\norm[\epsilon,h]{\uvv[h]}},
  \end{equation}
  where we have used the first inequality in~\eqref{eq:ah.coer}. 
  Recalling the definition~\eqref{eq:lh} of the linear form $l_h^n$, the fact that $\vf^n=-\DIV\ms(\vu) + \GRAD p$, and using~\eqref{eq:tph}, it is inferred that
  \begin{equation}
    \label{eq:tuvu.approx:base}
    \begin{aligned}
      a_h(\tuvu[h]^n-\Ih\vu^n,\uvv[h])
      &= l_h^n(\uvv[h]^n) - a_h(\Ih\vu^n,\uvv[h]) - b_h(\uvv[h],\tph{n})
      \\
      &= 
      \big\{
      -a_h(\Ih\vu^n,\uvv[h]) - (\DIV\ms(\vu^n),\vv[h])
      \big\}
      + \big\{
      (\GRAD p^n,\vv[h]) - b_h(\uvv[h],\tph^{n})
      \big\}.
    \end{aligned}
  \end{equation}
  Denote by $\term_1$ and $\term_2$ the terms in braces.
  Using~\eqref{eq:ah.consist}, it is readily inferred that 
  \begin{equation}
    \label{eq:tuvu.approx:T1}
    |\term_1|\lesssim h^{k+1}\left(
    2\mu\norm[H^{k+2}(P_\Omega)^d]{\vu^n}
    + \lambda\norm[H^{k+1}(P_\Omega)]{\DIV\vu^n}
    \right)\norm[\epsilon,h]{\uvv[h]}.
  \end{equation}
  For the second term, performing an element-wise integration by parts on $(\GRAD p,\vv[h])$ and recalling the definition~\eqref{eq:bh} of $b_h$ and~\eqref{eq:DT} of $\DT$ with $q=\tph^{n}$, it is inferred that
  \begin{equation}
    \label{eq:tuvu.approx:T2}
    \begin{aligned}
      |\term_2| 
      &= \left|\sum_{T\in\Th}\left\{
      (\tph^{n}-p^n,\DIV\vv[T])_T
      + \sum_{F\in\Fh[T]}(\tph^{n}-p^n,(\vv[F]-\vv[T])\normal_{TF})_F
      \right\}
      \right|
      \\
      &\lesssim h^{k+1}\ar^{\nicefrac12}\norm[H^{k+1}(P_\Omega)]{p^n}\norm[\epsilon,h]{\uvv[h]},
    \end{aligned}
  \end{equation}
  where the conclusion follows from the Cauchy--Schwarz inequality together with~\eqref{eq:tph.approx.L2}.
  Plugging~\eqref{eq:tuvu.approx:T1}--\eqref{eq:tuvu.approx:T2} into~\eqref{eq:tuvu.approx:base} we obtain~\eqref{eq:tuvu.approx}.
\end{proof}

\subsection{Error equations}\label{sec:err.anal:err.eq}

We define the discrete error components as follows: For all $1\le n\le N$,
\begin{equation}
  \label{eq:err.comp}
  \uve[h]^n\eqbydef\uvu[h]^n-\tuvu[h]^n,\qquad
  \rho_h^n\eqbydef p_h^n-\tph^n.
\end{equation}
Owing to the choice of the initial condition detailed in Section~\ref{sec:discrete}, the inital error $(\uve[h]^0,\rho_h^0) \eqbydef(\uvu[h]^{0}-\tuvu[h]^{0},p_h^0-\tph^0)$ is the null element in the product space $\UhD\times\Ph$.
On the other hand, for all $1\le n\le N$, $(\uve[h]^n,\rho_h^n)$ solves
\begin{subequations}
  \label{eq:err.eq}
  \begin{alignat}{2}
    \label{eq:err.eq:mech}
    a_h(\uve[h]^n,\uvv[h]) + b_h(\uvv[h],\rho_h^n) &= 0 
    &\qquad&\forall\uvv[h]\in\Uh,
    \\
    \label{eq:err.eq:flow}
    (c_0\ddt\rho_h^n,q_h)-b_h(\ddt\uve[h]^n,q_h) + c_h(\rho_h^n,q_h) &= \mathcal{E}_h^n(q_h),
    &\qquad&\forall q_h\in\Ph,
  \end{alignat}
\end{subequations}
with consistency error 
\begin{equation}
  \label{eq:Eh}
  \mathcal{E}_h^n(q_h)
  \eqbydef
  (g^n,q_h) -(c_0\ddt\tph^n,q_h) - c_h(\tph^n,q_h) + b_h(\ddt\tuvu[h]^n,q_h).
\end{equation}
\subsection{Convergence}
\label{sec:err.anal:conv}

\begin{theorem}[Estimate for the discrete errors] \label{thm:err.est}
  Let $(\vu,p)$ denote the unique solution to~\eqref{eq:biot.strong}, for which we assume the regularity
  \begin{equation}
    \label{eq:reg.up}
    \vu\in C^2(H^1(P_\Omega)^d)\cap C^1(H^{k+2}(P_\Omega)^d),\qquad
    p\in C^1(H^{k+1}(P_\Omega)).
  \end{equation}
  If $c_0>0$, we further assume $p\in C^2(L^2(\Omega))$. 
  Define, for the sake of brevity, the bounded quantities
  $$
  \begin{aligned}
    \mathcal{N}_1&\eqbydef
    \left( 2\mu+d\lambda \right)^{\nicefrac12} 
     \norm[C^2(H^1(P_\Omega)^d)]{\vu} 
     + \norm[C^2(L^2(\Omega)^d)]{c_0^{\nicefrac12} p},
    \\
    \mathcal{N}_2&\eqbydef
    \frac{(2\mu+d\lambda)^{\nicefrac12}}{2\mu}
    \left(
    2\mu\norm[C^1(H^{k+2}(P_\Omega)^d)]{\vu}
    +\lambda\norm[C^1(H^{k+1}(P_\Omega))]{\DIV\vu}
    + \ar^{\nicefrac12}\norm[C^1(H^{k+1}(P_\Omega))]{p}
  \right)
  \\
  &\qquad + \norm[C^0(H^{k+1}(P_{\Omega}))]{c_0^{\nicefrac12}p}.
  \end{aligned}
  $$
  Then, assuming the elliptic regularity~\eqref{eq:ell.reg}, it holds, letting $\overline{\rho}_{h}^{n}\eqbydef(\rho_{h}^{n},1)$,
  \begin{equation}
    \label{eq:err.est}
    \norm[a,h]{\uve[h]^N}^2
    + \norm{c_{0}^{\nicefrac12}\rho_{h}^{N}}^{2}
    + \frac{1}{2\mu+d\lambda}\norm{\rho_h^N-\overline{\rho}_h^N}^2
    +\sum_{n=1}^N\tau\norm[c,h]{\rho_h^n}^2
    \lesssim
    \left(\tau\mathcal{N}_1+h^{k+1}\mathcal{N}_2\right)^2.
  \end{equation}
\end{theorem}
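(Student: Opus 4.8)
The plan is to reproduce the mechanism of the a priori bound (Lemma~\ref{lem:a-priori}), now driven by the consistency error $\mathcal{E}_h^n$ instead of the data. First I would test the error equations against the discrete time increments: take $\uvv[h]=\tau\ddt\uve[h]^n$ in~\eqref{eq:err.eq:mech} and $q_h=\tau\rho_h^n$ in~\eqref{eq:err.eq:flow}, then add and sum over $1\le n\le N$. The coupling contributions $\pm\tau b_h(\ddt\uve[h]^n,\rho_h^n)$ cancel exactly, leaving
\[
\sum_{n=1}^N\tau\,a_h(\uve[h]^n,\ddt\uve[h]^n)+\sum_{n=1}^N\tau\,(c_0\ddt\rho_h^n,\rho_h^n)+\sum_{n=1}^N\tau\norm[c,h]{\rho_h^n}^2=\sum_{n=1}^N\tau\,\mathcal{E}_h^n(\rho_h^n),
\]
the exact analogue of~\eqref{eq:stability:1}. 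For the left-hand side I would apply~\eqref{eq:magic:BE} and telescope; since the initial error vanishes ($\uve[h]^0=\underline{\vec{0}}$, $\rho_h^0=0$) the negative endpoint terms drop and I retain $\tfrac12\norm[a,h]{\uve[h]^N}^2+\tfrac12\norm{c_0^{\nicefrac12}\rho_h^N}^2+\sum_n\tau\norm[c,h]{\rho_h^n}^2$ together with the nonnegative dissipation $\tfrac12\sum_n\tau^2(\norm[a,h]{\ddt\uve[h]^n}^2+\norm{c_0^{\nicefrac12}\ddt\rho_h^n}^2)$. In parallel, testing~\eqref{eq:err.eq:mech} with an arbitrary $\uvv[h]$ and invoking the inf-sup condition~\eqref{eq:inf-sup}, together with~\eqref{eq:bh.const} and the boundedness in~\eqref{eq:ah.coer}, gives $\norm{\rho_h^n-\overline{\rho}_h^n}\lesssim(2\mu+d\lambda)^{\nicefrac12}\norm[a,h]{\uve[h]^n}$, exactly as in step~(1) of Lemma~\ref{lem:a-priori}; this handles the third term of~\eqref{eq:err.est}.

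The heart of the proof is the right-hand side $\sum_n\tau\,\mathcal{E}_h^n(\rho_h^n)$, which I would first simplify structurally. Substituting $g^n$ from~\eqref{eq:biot.strong:flow}, using $c_h(\tph^n,q_h)=c_h(p^n,q_h)=-(\DIV(\diff\GRAD p^n),q_h)$ (definition~\eqref{eq:tph} and consistency~\eqref{eq:ch.consist}), and the commuting property~\eqref{eq:commuting.DT} in the form $b_h(\Ih\vw,q_h)=-(\DIV\vw,q_h)$ valid for $q_h\in\Ph$, the diffusion contributions cancel and I obtain
\[
\mathcal{E}_h^n(q_h)=c_0(\dt p^n-\ddt\tph^n,q_h)+(\DIV(\dt\vu^n-\ddt\vu^n),q_h)+b_h(\ddt(\tuvu[h]^n-\Ih\vu^n),q_h).
\]
This isolates a purely temporal defect $\dt(\cdot)-\ddt(\cdot)$ of size $O(\tau)$ and the spatial projection errors of size $O(h^{k+1})$ controlled by~\eqref{eq:tuvu.approx} and~\eqref{eq:tph.approx.L2}.

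For the divergence and displacement terms I would exploit orthogonality to constants. Since $\dt\vu^n-\ddt\vu^n$ vanishes on $\partial\Omega$, the divergence term pairs only with $\rho_h^n-\overline{\rho}_h^n$, and a Taylor estimate bounds $\norm{\DIV(\dt\vu^n-\ddt\vu^n)}\lesssim\tau\norm[C^2(H^1(P_\Omega)^d)]{\vu}$, feeding the $\tau\mathcal{N}_1$ part of~\eqref{eq:err.est}. For the displacement term, $\ddt(\tuvu[h]^n-\Ih\vu^n)\in\UhD$, so~\eqref{eq:bh.const} again reduces $\rho_h^n$ to $\rho_h^n-\overline{\rho}_h^n$; using continuity~\eqref{eq:bh.cont}, coercivity~\eqref{eq:ah.coer}, and the fact that by linearity $\ddt(\tuvu[h]^n-\Ih\vu^n)$ is the object bounded by~\eqref{eq:tuvu.approx} with $\vu^n,p^n$ replaced by their increments (hence by $C^1$-in-time norms), I recover precisely the first group in $\mathcal{N}_2$, weighted by $\tfrac{(2\mu+d\lambda)^{\nicefrac12}}{2\mu}$ and carrying the $\ar^{\nicefrac12}$ factor. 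Each contribution is then split by Young's inequality into a data part and a multiple of $\tfrac1\tF\sum_n\tau(\norm[a,h]{\uve[h]^n}^2+\norm{c_0^{\nicefrac12}\rho_h^n}^2)$.

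The step I expect to be the main obstacle is the compressibility term $c_0(\dt p^n-\ddt\tph^n,\rho_h^n)$, which carries the factor $c_0$ and therefore must be paired with $\norm{c_0^{\nicefrac12}\rho_h^n}$ and cannot be reduced to the zero-average part. I would write it as $c_0(\dt p^n-\ddt p^n,\rho_h^n)+c_0(\ddt(p^n-\tph^n),\rho_h^n)$; the first summand is the temporal defect bounded by $\tau\norm[C^2(L^2(\Omega))]{c_0^{\nicefrac12}p}$ (another $\mathcal{N}_1$ contribution), while the second involves the $L^2$ projection error of the increment $\ddt p^n$, estimated through~\eqref{eq:tph.approx.L2}. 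The subtlety is that a naive Cauchy--Schwarz in $n$ produces a $C^1$-in-time norm, whereas $\mathcal{N}_2$ only features $\norm[C^0(H^{k+1}(P_\Omega))]{c_0^{\nicefrac12}p}$: reaching the sharper $C^0$ form requires a discrete integration by parts in time (mirroring~\eqref{eq:stability:3}) that shifts the increment off the projection error and leaves an endpoint term $c_0(p^N-\tph^N,\rho_h^N)$ to be absorbed against the energy term $\norm{c_0^{\nicefrac12}\rho_h^N}^2$. Closing this term with constants that are simultaneously robust in $\lambda$, $\mu$ and the heterogeneity $\ar$, and exploiting the retained dissipation $\sum_n\tau^2\norm{c_0^{\nicefrac12}\ddt\rho_h^n}^2$ to control the interior remainder, is where the delicate bookkeeping lies. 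Once all data parts are collected into the right-hand side of~\eqref{eq:err.est}, I would conclude as in Lemma~\ref{lem:a-priori} by the discrete Gronwall inequality (Lemma~\ref{lem:disc.gronwall}), carrying $K=\tfrac1{2\mu+d\lambda}\norm{\rho_h^N-\overline{\rho}_h^N}^2$ along via the inf-sup estimate.
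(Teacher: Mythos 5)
Your proposal follows the paper's proof in all of its main steps: the energy identity obtained by testing with $(\tau\ddt\uve[h]^n,\tau\rho_h^n)$, the inf-sup bound $\norm{\rho_h^n-\overline{\rho}_h^n}\lesssim(2\mu+d\lambda)^{\nicefrac12}\norm[a,h]{\uve[h]^n}$, the telescoping via~\eqref{eq:magic:BE} with vanishing initial error, and the concluding discrete Gronwall argument are exactly the paper's step~(1) and its conclusion, while your rewritten consistency error
\begin{equation*}
  \mathcal{E}_h^n(q_h)=c_0(\dt p^n-\ddt\tph^n,q_h)+(\DIV(\dt\vu^n-\ddt\vu^n),q_h)+b_h(\ddt(\tuvu[h]^n-\Ih\vu^n),q_h)
\end{equation*}
is algebraically the paper's decomposition $\term_1+\term_2+\term_3$ with $\term_2=0$ (the cancellation of the diffusion contributions is precisely the purpose of the projection~\eqref{eq:tph}) and with the insertion of $\pm\Ih\vu^n$ and the commuting property~\eqref{eq:commuting.DT} already built in; the reduction of the divergence and displacement terms to $\rho_h^n-\overline{\rho}_h^n$ by orthogonality to constants also coincides with the paper's treatment of $\term_3$.

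The one point where you depart from the paper is the compressibility term, and there your plan has a genuine gap. The summation-by-parts mechanism you propose does not close: after shifting the increment, the interior remainder is $\sum_{n=1}^N\tau\,(c_0(p^{n-1}-\tph^{n-1}),\ddt\rho_h^n)$, and the only available control on $\ddt\rho_h^n$ is the dissipation $\sum_{n=1}^N\tau^2\norm{c_0^{\nicefrac12}\ddt\rho_h^n}^2$ retained from~\eqref{eq:magic:BE}, which carries the weight $\tau^2$ rather than $\tau$. Cauchy--Schwarz against it forces the companion factor $\bigl\{\sum_{n=1}^N\norm{c_0^{\nicefrac12}(p^{n-1}-\tph^{n-1})}^2\bigr\}^{\nicefrac12}\sim(\tF/\tau)^{\nicefrac12}h^{k+1}$, and Young's inequality then leaves a term of order $\tF h^{2(k+1)}/\tau$, which is not $\lesssim(\tau\mathcal{N}_1+h^{k+1}\mathcal{N}_2)^2$ uniformly in $\tau$ (take $\tau\sim h^{k+1}$: the leftover is of order $h^{k+1}$ while the target is of order $h^{2(k+1)}$). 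The idea you are missing --- and which you in fact already invoke for the displacement term --- is linearity of the projection: since~\eqref{eq:tph} is a linear, time-independent problem, $\ddt\tph^n$ is, for $n\ge2$, itself the projection of $\ddt p^n$, so~\eqref{eq:tph.approx.L2} applies directly to the increment and yields $\norm{\ddt(p^n-\tph^n)}\lesssim h^{k+1}\ar^{\nicefrac12}\norm[H^{k+1}(P_\Omega)]{\ddt p^n}$, with no $1/\tau$ and no summation by parts. This is exactly the paper's direct Cauchy--Schwarz bound~\eqref{eq:err.est:est.T1}: no endpoint absorption into $\norm{c_0^{\nicefrac12}\rho_h^N}^2$ is needed, and the $C^0$-in-time norm in $\mathcal{N}_2$ is not produced by integration by parts in time but enters only through the $n=1$ increment, where $\tph^0=\lproj{k}p^0$ is the $L^2$ projection rather than the elliptic one and the paper invokes~\eqref{eq:approx.lproj}. (When $c_0=0$ this term vanishes entirely, so in that case your argument is already complete and identical to the paper's.)
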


\begin{remark}[Pressure estimate for $c_0=0$]
  In the incompressible case $c_0=0$, the third term in the left-hand side of~\eqref{eq:err.est} delivers an estimate on the $L^2$-norm of the pressure 
since $\overline{\rho}_h^N=0$ (cf.~\eqref{eq:biot.strong:zero.p}).
\end{remark}

\begin{proof}[Proof of Theorem~\ref{thm:err.est}]
  Throughout the proof, $C_i$ with $i\in\Natural^*$ will denote a generic positive constant independent of $h$, $\tau$, and of the physical parameters $c_{0}$, $\lambda$, $\mu$, and $\diff$.
  \\
(1) \emph{Basic error estimate.}
Using the inf-sup condition~\eqref{eq:inf-sup}, equation~\eqref{eq:bh.const} followed by~\eqref{eq:err.eq:mech}, and the second inequality in~\eqref{eq:ah.coer}, it is readily seen that 
\begin{equation}
\label{eq:err.average.inf-sup}
    \norm{\rho_h^n-\overline{\rho}_h^n}
    \le \beta
    \sup_{\uvv[h]\in\UhD\setminus\{\underline{\vec{0}}\}}\frac{b_h(\uvv[h],\rho_h^n-\overline{\rho}_h^n)}{\norm[\epsilon,h]{\uvv[h]}}
    =  \beta\sup_{\uvv[h]\in\UhD\setminus\{\underline{\vec{0}}\}}\frac{-a_h(\uve[h]^n,\uvv[h])}{\norm[\epsilon,h]{\uvv[h]}}
    \le C_1^{\nicefrac12}(2\mu+d\lambda)^{\nicefrac12}\norm[a,h]{\uve[h]^n},
\end{equation}
with $C_1^{\nicefrac12}=\beta\eta^{\nicefrac12}$.
Adding~\eqref{eq:err.eq:mech} with $\uvv[h]=\tau\ddt\uve[h]$ to~\eqref{eq:err.eq:flow} with $q_{h}=\tau\rho_{h}^{n}$ and summing the resulting equation over $1\le n \le N$, it is inferred that
\begin{equation}
    \label{eq:err.eq.sum}
    \sum_{n=1}^N\tau a_h{(\uve[h]^n,\ddt\uve[h]^n)}
    + \sum_{n=1}^N\tau(c_0\ddt\rho_h^n,\rho_h^n)
    + \sum_{n=1}^N\tau\norm[c,h]{\rho_h^n}^2
    =
    \sum_{n=1}^N\tau\mathcal{E}_h^n(\rho_h^n).
  \end{equation}
Proceeding as in point (3) of the proof of Lemma~\ref{lem:a-priori}, and recalling that $(\uve[h]^{0},\rho_h^0)=(\underline{\vec{0}},0)$, we arrive at the following error estimate:
  \begin{equation}
    \label{eq:err.est:1}
    \frac14\norm[a,h]{\uve[h]^N}^2
    + \frac{1}{4C_1(2\mu+d\lambda)}\norm{\rho_h^N-\overline{\rho}_h^N}^2
    + \frac12\norm{c_0^{\nicefrac12}\rho_h^N}^2
    + \sum_{n=1}^N\tau\norm[c,h]{\rho_h^n}^2
    \le
    \sum_{n=1}^N\tau\mathcal{E}_h^n(\rho_h^n).
  \end{equation}
  \\
  (2) \emph{Bound of the consistency error.}
  Using $g^n=c_0\dt p^n+\DIV(\dt\vu^n-\diff\GRAD p^n)$, the consistency property~\eqref{eq:ch.consist}, and observing that, using the definition~\eqref{eq:ch} of $c_{h}$, integration by parts together with the homogeneous displacement boundary condition~\eqref{eq:biot.strong:bc.u}, and~\eqref{eq:bh.const},
  $$
  c_h(p^n-\tph^n,\overline\rho_h^n)
  + (\DIV(\dt\vu^{n}),\overline\rho_{h}^{n})
  + b_h(\ddt\tuvu[h]^n,\overline\rho_h^n)=0,
  $$
  we can decompose the right-hand side of~\eqref{eq:err.est:1} as follows:
  \begin{equation}
    \label{eq:err.est:Ehn.dec}
    \begin{aligned}
        \sum_{n=1}^N\tau\mathcal{E}_h^n(\rho_h^n)
        &=  
        \sum_{n=1}^N\tau (c_0(\dt p^n-\ddt\tph^n),\rho_h^n)
        + \sum_{n=1}^N \tau c_h(p^n-\tph^n,\rho_h^n-\overline\rho_h^n)
        \\
        &\qquad 
        + \sum_{n=1}^N\tau\left\{
        (\DIV(\dt\vu^n),\rho_h^n-\overline\rho_h^n) + b_h(\ddt\tuvu[h]^n,\rho_h^n-\overline\rho_h^n)
        \right\}
        \eqbydef\term_1 + \term_2 + \term_3.
    \end{aligned}
  \end{equation}
  For the first term, inserting $\pm\ddt p^n$ into the first factor and using the Cauchy-Schwarz inequality followed by the approximation properties of $\tph^{0}$ (a consequence  of~\eqref{eq:approx.lproj}) and~\eqref{eq:tph.approx.L2} of $\tph^n$, it is inferred that
  \begin{equation}
  \label{eq:err.est:est.T1}
      \begin{aligned}
        \left|\term_1\right| 
        &\lesssim 
        \left\{ 
          c_{0}\sum_{n=1}^N\tau \left[\norm{\dt p^n-\ddt p^n}^2 + \norm{\ddt (p^n-\tph^n)}^2\right] 
        \right\}^{\nicefrac12} \times\left\{
          \sum_{n=1}^N\tau\norm{c_{0}^{\nicefrac12}\rho_h^n}^2
        \right\}^{\nicefrac12}
        \\
        &\le 
        C_2\left(
          \tau\mathcal{N}_1 + h^{k+1}\mathcal{N}_2
          \right)
        +\frac12\sum_{n=1}^N\tau\norm{c_{0}^{\nicefrac12}\rho_h^n}^2.
      \end{aligned}
  \end{equation}
For the second term, the choice~\eqref{eq:tph} of the pressure projection readily yields 
  \begin{equation}\label{eq:err.est:T2}
    \term_{2}=0.
  \end{equation}
For the last term, inserting $\pm\Ih \vu^n$ into the first argument of $b_h$, and using the commuting property~\eqref{eq:commuting.DT} of $\DT$, it is inferred that
  $$
  \term_3
  =\sum_{n=1}^N\tau\left\{
  \sum_{T\in\Th}\left[
  (\DIV(\dt\vu^n-\ddt\vu^n),\rho_h^n-\overline\rho_h^n)_T + (\DT\ddt(\IT\vu^n-\tuvu[T]^n),\rho_h^n-\overline\rho_h^n)_T
  \right]
  \right\}.
  $$
  Using the Cauchy--Schwarz inequality, the bound $\norm[T]{\DT\ddt(\IT\vu^n-\tuvu[T]^n)}\lesssim\norm[\epsilon,T]{\ddt(\IT\vu^n-\tuvu[T]^n)}$ valid for all $T\in\Th$, and the approximation properties~\eqref{eq:tuvu0.approx} and~\eqref{eq:tuvu.approx} of $\tuvu^{0}$ and $\tuvu^{n}$, respectively, we obtain
  \begin{equation}
    \begin{aligned}
    \label{eq:err.est:est.T3}
      \left|\term_3\right|
      &\lesssim\left\{
      \sum_{n=1}^N\tau\left[
      \norm[H^1(\Omega)^d]{\dt\vu^n-\ddt\vu^n}^2
      + \norm[\epsilon,h]{\ddt(\Ih\vu^n-\tuvu[h]^n)}^2
      \right]
      \right\}^{\nicefrac12}\times\left\{
      \sum_{n=1}^N\tau\norm{\rho_h^n-\overline\rho_h^n}^2
      \right\}^{\nicefrac12}
      \\
      &\le 
      C_3C_1\left(\tau\mathcal{N}_1 + h^{k+1}\mathcal{N}_2\right)^2
      +\frac{1}{4C_1(2\mu+d\lambda)}\sum_{n=1}^N\tau\norm{\rho_h^n-\overline\rho_h^n}^2.
    \end{aligned}
  \end{equation}
  Using~\eqref{eq:err.est:est.T1}--\eqref{eq:err.est:est.T3} to bound the right-hand side of~\eqref{eq:err.est:Ehn.dec}, it is inferred  
  \begin{multline}
    \norm[a,h]{\uve[h]^N}^2
    +\frac{1}{C_1(2\mu+d\lambda)}\norm{\rho_h^N-\overline\rho_h^N}^2
    + 2\norm{c_0^{\nicefrac12}\rho_h^N}^2
    +4\sum_{n=1}^N\tau\norm[c,h]{\rho_h^n}^2
    \\
    \le    
    \frac{1}{C_1(2\mu+d\lambda)}\sum_{n=1}^N\tau\norm{\rho_h^n-\overline\rho_h^n}^2
    +2\sum_{n=1}^N\tau\norm{c_0^{\nicefrac12}\rho_h^n}^2
    +G,
  \end{multline}
  with $G\eqbydef 4(C_1 C_3+C_2)\left(\tau\mathcal{N}_1+ h^{k+1}\mathcal{N}_2\right)^2$.
  The conclusion follows using the discrete Gronwall's inequality~\eqref{eq:disc.gronwall} with $\delta=\tau$, $K=\norm[a,h]{\uve[h]^N}^2$, $a^0=0$ and $a^n=\frac{1}{C_1(2\mu+d\lambda)}\norm{\rho_h^n-\overline\rho_h^n}^2+2\norm{c_0^{\nicefrac12}\rho_h^n}^2$ for $1\le n\le N$, $b^n=4\norm[c,h]{\rho_h^n}^2$, and $\gamma^n=1$.
\end{proof}

\begin{remark}[Role of the choice~\eqref{eq:projection} and of elliptic regularity]\label{rem:projection}
  The choice~\eqref{eq:projection} for the projection ensures that the term $\term_{2}$ in step (2) of the proof of Theorem~\ref{thm:err.est} vanishes.
  This is a key point to obtain an order of convergence of $(k+1)$ in space.
  For a different choice, say $\tph^{n}=\lproj{k}p^{n}$, this term would be of order $k$, and therefore yield a suboptimal estimate for the terms in the left-hand side of~\eqref{eq:en.u.L2.p} below (the estimate~\eqref{eq:en.p} would not change and remain optimal).
  This would also be the case if we removed the elliptic regularity assumption~\eqref{eq:ell.reg}.
\end{remark}

\begin{remark}[BDF2 time discretization]\label{rem:bdf2}
  In some of the numerical test cases of Section~\ref{sec:num.tests}, we have used a BDF2 time discretization, which corresponds to the backward differencing operator
\begin{equation}
\label{eq:BDF2}
  \bdf\varphi^{n+2}\eqbydef\frac{3\varphi^{n+2}-4\varphi^{n+1}+\varphi^n}{2\tau},
\end{equation}
  used in place of~\eqref{eq:ddt}. As BDF2 requires two starting values, we perform a first march in time using the backward Euler scheme
  (another possibility would have been to resort to the second-order Crank--Nicolson scheme).
  For the BDF2 time discretization, stability estimates similar to those of Lemma~\ref{lem:a-priori} can be proved with this initialization, while the error can be shown to scale as $\tau^2+h^{k+1}$ (compare with~\eqref{eq:err.est}).
  The main difference with respect to the present analysis focused on the backward Euler scheme is that formula~\eqref{eq:magic:BE} is replaced in the proofs by
  $$
  2x (3x-4y+z) = x^{2} - y^{2} + (2x-y)^{2} - (2y-z)^{2} + (x-2y+z)^{2}.
  $$
  The modifications of the proofs are quite classical and are not detailed here for the sake of conciseness (for a pedagogic exposition, one can consult, e.g.,~\cite[Chapter~6]{Ern.Guermond:04}).
\end{remark}

\begin{corollary}[Convergence]
  Under the assumptions of Theorem~\ref{thm:err.est}, it holds that
  \begin{multline}
    \label{eq:en.u.L2.p}
    (2\mu)^{\nicefrac12}\norm{\GRADsh(\rh\uvu[h]^N-\vu^N)}
    + \norm{c_0^{\nicefrac12}(p_h^N-p^N)}
    +\frac{1}{2\mu+d\lambda}
    \norm{(p_h^N-p^N)-(\overline{p}_h^N-\overline{p}^N)}
    \\
    \lesssim
    \tau \mathcal{N}_1 + h^{k+1}\mathcal{N}_2
    + c_0^{\nicefrac12} h^{k+1}\norm[H^{k+1}(P_\Omega)]{p^N},
  \end{multline}
  \begin{equation}
    \label{eq:en.p}
    \left\{
    \sum_{n=1}^N\tau\norm[c,h]{p_h^n-p^n}^2
    \right\}^{\nicefrac12}
    \lesssim 
    \tau \mathcal{N}_1 + h^{k+1}\mathcal{N}_2
    + h^k\udiff^{\nicefrac12}\tF^{\nicefrac12}\norm[C^0(H^{k+1}(P_\Omega))]{p}.
  \end{equation}
\end{corollary}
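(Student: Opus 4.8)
The plan is to reduce both estimates to Theorem~\ref{thm:err.est} and to the projection approximation properties of Lemma~\ref{lem:tuvu.tph.approx} by a systematic use of the triangle inequality: each \emph{total} error (discrete solution versus exact solution) is written as the sum of a \emph{discrete} error (discrete solution versus projection, controlled by Theorem~\ref{thm:err.est}) and a \emph{projection} error (projection versus exact solution, controlled by Lemma~\ref{lem:tuvu.tph.approx}). First I would take square roots in~\eqref{eq:err.est}, using $\sqrt{a+b}\lesssim\sqrt a+\sqrt b$, to isolate the four quantities governing the discrete error,
$$
\norm[a,h]{\uve[h]^N}
+ \norm{c_0^{\nicefrac12}\rho_h^N}
+ \frac{1}{(2\mu+d\lambda)^{\nicefrac12}}\norm{\rho_h^N-\overline{\rho}_h^N}
+ \Big(\sum_{n=1}^N\tau\norm[c,h]{\rho_h^n}^2\Big)^{\nicefrac12}
\lesssim \tau\mathcal{N}_1 + h^{k+1}\mathcal{N}_2.
$$

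For the displacement term of~\eqref{eq:en.u.L2.p} I would split
$$
\rh\uvu[h]^N - \vu^N
= \rh\uve[h]^N + \rh(\tuvu[h]^N-\Ih\vu^N) + (\rh\Ih\vu^N - \vu^N),
$$
and observe that the definitions~\eqref{eq:aT}--\eqref{eq:ah} of $a_h$, in which the stabilization and the term in $\lambda$ are nonnegative, yield $2\mu\norm{\GRADsh\rh\uvw[h]}^2\le\norm[a,h]{\uvw[h]}^2$ for any $\uvw[h]$. Hence the first summand is controlled by $\norm[a,h]{\uve[h]^N}$ through Theorem~\ref{thm:err.est}, and the second by $(2\mu)^{\nicefrac12}\norm[a,h]{\tuvu[h]^N-\Ih\vu^N}$ through~\eqref{eq:tuvu.approx}, while the third is handled directly by the reconstruction estimate~\eqref{eq:rh.approx}; each contribution is absorbed into $\tau\mathcal{N}_1+h^{k+1}\mathcal{N}_2$ after checking the coefficient weights. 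For the two pressure terms I would write $p_h^N-p^N=\rho_h^N+(\tph^N-p^N)$ (and similarly for the mean-subtracted parts), bound the $\rho_h^N$ contributions by Theorem~\ref{thm:err.est}, and estimate $\norm{c_0^{\nicefrac12}(\tph^N-p^N)}$ and the zero-average part of $\tph^N-p^N$ through~\eqref{eq:tph.approx.L2} (subtraction of the integral being an $L^2$-bounded operation on $\Omega$); the former produces precisely the extra term $c_0^{\nicefrac12}h^{k+1}\norm[H^{k+1}(P_\Omega)]{p^N}$ on the right of~\eqref{eq:en.u.L2.p}.

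The estimate~\eqref{eq:en.p} is the cleanest: from $\norm[c,h]{p_h^n-p^n}\le\norm[c,h]{\rho_h^n}+\norm[c,h]{\tph^n-p^n}$, weighting by $\tau$ and summing, the first group is bounded by the fourth term of the square root of~\eqref{eq:err.est}, while for the second I would insert~\eqref{eq:tph.approx.en} and use $\sum_{n=1}^N\tau=\tF$ together with $\norm[H^{k+1}(P_\Omega)]{p^n}\le\norm[C^0(H^{k+1}(P_\Omega))]{p}$ to produce $h^k\udiff^{\nicefrac12}\tF^{\nicefrac12}\norm[C^0(H^{k+1}(P_\Omega))]{p}$. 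I do not expect a genuine obstacle here, since the argument is structurally a triangle inequality; the only point demanding care is the bookkeeping of the weights $\mu$, $\lambda$, $c_0$ and the heterogeneity ratio $\ar$, so as to confirm that every projection contribution is indeed dominated by $h^{k+1}\mathcal{N}_2$ or by the two displayed extra terms, and the passage between the $\norm[a,h]{\cdot}$-control inherited from Theorem~\ref{thm:err.est} and~\eqref{eq:tuvu.approx} and the $(2\mu)^{\nicefrac12}\norm{\GRADsh\rh(\cdot)}$ form in which~\eqref{eq:en.u.L2.p} is stated.
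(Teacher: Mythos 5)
Your proof is correct and follows essentially the same route as the paper's: the same three-term decomposition $\rh\uvu[h]^N-\vu^N=\rh\uve[h]^N+\rh(\tuvu[h]^N-\Ih\vu^N)+(\rh\Ih\vu^N-\vu^N)$ and the same pressure splittings $p_h^N-p^N=\rho_h^N+(\tph^N-p^N)$, with the discrete errors controlled by Theorem~\ref{thm:err.est} and the projection errors by~\eqref{eq:tuvu.approx}, \eqref{eq:tph.approx.L2}, \eqref{eq:tph.approx.en} and~\eqref{eq:rh.approx}, including the identical treatment of~\eqref{eq:en.p}. The only cosmetic differences are that you derive $2\mu\norm{\GRADsh\rh\uvw[h]}^2\le\norm[a,h]{\uvw[h]}^2$ directly from the definition~\eqref{eq:aT} (the paper invokes~\eqref{eq:ah.coer}, and your version is in fact the more precise justification), and that you handle the mean-subtracted projection error via $L^2$-boundedness of mean subtraction where the paper uses the closure condition $\int_\Omega\tph^N=\int_\Omega p^N$ from~\eqref{eq:projection} to drop the means outright.
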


\begin{proof}
  Using the triangular inequality, recalling the definition~\eqref{eq:err.comp} of $\uve[h]^N$ and $\tph^N$ and~\eqref{eq:ah.coer} of $\norm[a,h]{{\cdot}}$-norm, it is inferred that 
  $$
  \begin{aligned}
    (2\mu)^{\nicefrac12}\norm{\GRADsh(\rh\uvu[h]^N-\vu^N)}
    &\lesssim\norm[a,h]{\uve[h]^N}
    +(2\mu)^{\nicefrac12}\norm{\GRADsh(\rh\tuvu[h]-\rh\Ih\vu^N)}
    \\
    &\qquad +(2\mu)^{\nicefrac12}\norm{\GRADs(\rh\Ih\vu^N-\vu^N)},
    \\
    \norm{p_h^N-p^N-(\overline{p}_h^N-\overline{p}^N)}
    &\le\norm{\rho_h^N-\overline{\rho}_h^N} + \norm{\tph^N-p^N},
    \\
    \norm{c_0^{\nicefrac12}(p_h^N-p^N)}
    &\le\norm{c_0^{\nicefrac12}\rho_h^N}+\norm{c_0^{\nicefrac12}(\tph^N-p^N)}.
  \end{aligned}
  $$
  To conclude, use~\eqref{eq:err.est} to estimate the left-most terms in the right-hand sides of the above equations. 
  Use~\eqref{eq:tuvu.approx} and~\eqref{eq:tph.approx.L2}, the approximation properties~\eqref{eq:rh.approx} of $\rh\Ih$, respectively, for the right-most terms.
  This proves~\eqref{eq:en.u.L2.p}. A similar decomposition of the error yields~\eqref{eq:en.p}.
\end{proof}


\section{Implementation}\label{sec:implementation}

In this section we discuss practical aspects including, in particular, static condensation.
The implementation is based on the \textsf{hho} platform\footnote{DL15105 Université de Montpellier}, which relies on the linear algebra facilities provided by the \textsf{Eigen3} library~\cite{Guennebaud.Jacob:10}.

The starting point consists in selecting a basis for each of the polynomial spaces appearing in the construction.
Let $\vec{s}=(s_1,...,s_d)$ be a $d$-dimensional multi-index with the usual notation $|\vec{s}|_{1}=\sum_{i=1}^d s_i$, and let $\vec{x}=(x_1,...,x_d)\in\Real^d$.
Given $k\ge 0$ and $T\in\Th$, we denote by $\cB{T}$ a basis for the polynomial space $\Poly{k}(T)$.
In the numerical experiments of Section~\ref{sec:num.tests}, we have used the set of locally scaled monomials:
\begin{equation}
  \label{eq:basis_fun} 
   \cB{T}\eqbydef\left\{\left(\frac{\vec{x}-\vec{x}_T}{h_T}\right)^{\vec{s}},\;|\vec{s}|_1\le k\right\},
\end{equation}
with $\vec{x}_T$ denoting the barycenter of $T$.
Similarly, for all $F\in\Fh$, we denote by $\cB{F}$ a basis for the polynomial space $\Poly[d-1]{k}(F)$ which, in the proposed implementation, is again a set of locally scaled monomials similar to~\eqref{eq:basis_fun}.

\begin{remark}[Choice of the polynomial bases]
  The choice of the polynomial bases can have a sizeable impact on the conditioning of both the local problems defining the displacement reconstruction $\rT$ (cf.~\eqref{eq:rT}) and the global problem.
  This is particularly the case when using high polynomial orders (typically, $k\ge 7$).
  The scaled monomial basis~\eqref{eq:basis_fun} is appropriate when dealing with isotropic elements.
  In the presence of anisotropic elements, a better choice is to use for each element a local frame aligned with its principal axes of rotation together with normalization factors tailored for each direction.
  A further improvement, originally investigated in~\cite{Bassi.Botti.ea:12} in the context of dG methods, consists in performing a Gram--Schmidt orthonormalization with respect to a suitably selected inner product.
  In the numerical test cases of Section~\ref{sec:num.tests}, which focus on isotropic meshes and moderate polynomial degrees ($k\le 3$), the basis~\eqref{eq:basis_fun} proved fully satisfactory.
\end{remark}%

Introducing the vector bases $\bcB{T}\eqbydef(\cB{T})^d$, $T\in\Th$, and $\bcB{F}\eqbydef(\cB{F})^d$, $F\in\Fhi$, a basis $\ubcUh$ for the space $\UhD$ (cf.~\eqref{eq:UhD}) is given by
$$
\ubcUh\eqbydef\bcU{\cal T}\times\bcU{\cal F},\qquad
\bcU{\cal T}\eqbydef\bigtimes_{T\in\Th}\bcB{T},\qquad
\bcU{\cal F}\eqbydef\bigtimes_{F\in\Fhi}\bcB{F},
$$
while a basis $\cPh$ for the space $\Ph$ (cf.~\eqref{eq:Ph}) is obtained setting
$$
\cPh\eqbydef\bigtimes_{T\in\Th}\cB{T}.
$$
When $c_0=0$, the zero average constraint in $\Ph$ can be accounted for using as a Lagrange multiplier the characteristic function of $\Omega$.
Notice also that boundary faces have been excluded from the Cartesian product in the definition of $\bcU{\cal F}$ to strongly account for boundary conditions.
Letting, for the sake of brevity, $N_n^k\eqbydef{k+n\choose k}$, $n\in\Natural$, a simple computation shows that
$$
\dim(\bcU{\cal T})=d\card{\Th}N_d^k,\qquad
\dim(\bcU{\cal F})=d\card{\Fhi}N_{d-1}^k,\qquad
\dim(\cPh)=\card{\Th}N_d^k.
$$
The total DOF count thus yields
\begin{equation}\label{eq:total.DOFs}
  d\card{\Th}N_d^k + d\card{\Fhi}N_{d-1}^k + \card{\Th}N_d^k.
\end{equation}
In what follows, for a given time step $0\le n\le N$, we denote by $\matalg{U}_{\cal T}^n$ and $\matalg{U}_{\cal F}^n$ the vectors collecting element-based and face-based displacement DOFs, respectively, and by $\matalg{P}^n$ the vector collecting pressure DOFs.

Denote now by $\matalg{A}$ and $\matalg{B}$, respectively, the matrices that represent the bilinear forms $a_h$ (cf.~\eqref{eq:ah}) and $b_h$ (cf.~\eqref{eq:bh}) in the selected basis.
Distinguishing element-based and face-based displacement DOFs, the matrices $\matalg{A}$ and $\matalg{B}$ display the following block structure:
$$
\matalg{A}=
\begin{pmat}[{|}]
  \matalg{A}_{{\cal T}{\cal T}} & \matalg{A}_{{\cal T}{\cal F}} \cr\-
  \matalg{A}_{{\cal T}{\cal F}}\trans & \matalg{A}_{{\cal F}{\cal F}} \cr
\end{pmat},\qquad
\matalg{B}=
\begin{pmat}[{.}]
  \matalg{B}_{\cal T}\vphantom{\matalg{A}_{{\cal T}{\cal T}}} \cr\-
  \matalg{B}_{\cal F}\vphantom{\matalg{A}_{{\cal T}{\cal F}}\trans} \cr
\end{pmat}.
$$
For every mesh element $T\in\Th$, the element-based displacement DOFs are only coupled with those face-based displacement DOFs that lie on the boundary of $T$ and with the (element-based) pressure DOFs in $T$.
This translates into the fact that the submatrix $\matalg{A}_{{\cal T}{\cal T}}$ is block-diagonal, i.e.,
$$
\matalg{A}_{{\cal T}{\cal T}}={\rm diag}(\matalg{A}_{TT})_{T\in\Th},
$$
with each elementary block $\matalg{A}_{TT}$ of size $\dim(\bcB{T})^2$.
Additionally, it can be proved that the blocks $\matalg{A}_{TT}$, $T\in\Th$, are invertible, so that the inverse of $\matalg{A}_{{\cal T}{\cal T}}$ can be efficiently computed setting
\begin{equation}\label{eq:ATT.inv}
  \matalg{A}_{{\cal T}{\cal T}}^{-1} = {\rm diag}(\matalg{A}_{TT}^{-1})_{T\in\Th}.
\end{equation}
The above remark can be exploited in practice to efficiently eliminate the element-based displacement DOFs from the global system.
This process, usually referred to as ``static condensation'', is detailed in what follows.

For a given time step $1\le n\le N$, the linear system corresponding to the discrete problem~\eqref{eq:biot.h} is of the form
\begin{equation}\label{eq:biot.ls}
  \begin{pmat}[{||}]
    \matalg{A}_{{\cal T}{\cal T}} & \matalg{A}_{{\cal T}{\cal F}} & \matalg{B}_{\cal T} \cr\-
    \matalg{A}_{{\cal T}{\cal F}}\trans & \matalg{A}_{{\cal F}{\cal F}} & \matalg{B}_{\cal F} \cr\-
    -\matalg{B}_{\cal T}\trans & -\matalg{B}_{\cal F}\trans & \tfrac{\tau}{\theta}\matalg{C} + c_0\matalg{M} \cr
  \end{pmat}
  \begin{pmat}[{.}]
    \matalg{U}_{\cal T}^n \cr\-
    \matalg{U}_{\cal F}^n \cr\-
    \matalg{P}^n \cr
  \end{pmat}
  =
  \begin{pmat}[{.}]
    \matalg{F}_{\cal T}^n \cr\-
    \matalg{0}_{\cal F} \cr\-
    \widetilde{\matalg{G}}^n \cr
  \end{pmat},
\end{equation}
where $\matalg{C}$ denotes the matrix that represents the bilinear form $c_h$ in the selected basis, $\matalg{M}$ is the (block diagonal) pressure mass matrix, $\matalg{F}_{\cal T}^n$ is the vector corresponding to the discretization of the volumetric load $\vf^n$, while $\matalg{0}_{\cal F}$ is the zero vector of length $\dim(\bcU{\cal F})$.
Denoting by $\matalg{G}^n$ the vector corresponding to the discretization of the fluid source $g^n$, when the backward Euler method is used to march in time, we let $\theta=1$ and set
$$
\widetilde{\matalg{G}}^n
\eqbydef
\tau\matalg{G}^n - \matalg{B}\matalg{U}^{n-1}.
$$
For the BDF2 method (and $n\ge 2$) , we let $\theta=\nicefrac32$ and set
$$
\widetilde{\matalg{G}}^n
\eqbydef
\frac23\tau\matalg{G}^n
-\frac43\matalg{B}\matalg{U}^{n-1}
-\frac13\matalg{B}{\matalg{U}^{n-2}}.
$$

Recalling~\eqref{eq:ATT.inv}, instead of assemblying the full system, we can effectively compute the Schur complement of $\matalg{A}_{{\cal T}{\cal T}}$ and code, instead, the following reduced version, where the element-based displacement DOFs collected in the subvector $\matalg{U}_{\cal T}^n$ no longer appear:
\begin{equation}\label{eq:biot.ls:reduced}
  \begin{pmat}[{|}]
    \matalg{A}_{{\cal F}{\cal F}} - \matalg{A}_{{\cal T}{\cal F}}\trans\matalg{A}_{{\cal T}{\cal T}}^{-1}\matalg{A}_{{\cal T}{\cal F}}
    & \matalg{B}_{\cal F} - \matalg{A}_{{\cal T}{\cal F}}\trans\matalg{A}_{{\cal T}{\cal T}}^{-1}\matalg{B}_{\cal T} \cr\-
    -\matalg{B}_{\cal F}\trans + \matalg{B}_{\cal T}\trans\matalg{A}_{{\cal T}{\cal T}}^{-1}\matalg{A}_{{\cal T}{\cal F}}
    & \tfrac{\tau}{\theta}\matalg{C} + c_0\matalg{M} + \matalg{B}_{\cal T}\trans\matalg{A}_{{\cal T}{\cal T}}^{-1}\matalg{B}_{\cal T} \cr
  \end{pmat}
  \begin{pmat}[{.}]
    \matalg{U}_{\cal F}^n \cr\-
    \matalg{P}^n \cr
  \end{pmat}
  =
  \begin{pmat}[{.}]
    - \matalg{A}_{{\cal T}{\cal F}}\trans\matalg{A}_{{\cal T}{\cal T}}^{-1}\matalg{F}_{\cal T}^n \cr\-
    \widetilde{\matalg{G}}^n + \matalg{B}_{\cal T}\trans\matalg{A}_{{\cal T}{\cal T}}^{-1}\matalg{F}_{\cal T}^n \cr    
  \end{pmat}.
\end{equation}
All matrix products appearing in~\eqref{eq:biot.ls:reduced} are directly assembled from their local counterparts (i.e., the factors need not be constructed separately).
Specifically, introducing, for all $T\in\Th$, the following local matrices $\matalg{A}(T)$ and $\matalg{B}(T)$ representing the local bilinear forms $a_T$ (cf.~\eqref{eq:aT}) and $b_T$ (cf.~\eqref{eq:bh}), respectively:
$$
\matalg{A}(T)=\begin{pmat}[{|}]
\matalg{A}_{TT} & \matalg{A}_{T\Fh[T]} \cr\- \matalg{A}_{T\Fh[T]}\trans & \matalg{A}_{\Fh[T]\Fh[T]} \cr
\end{pmat},\qquad
\matalg{B}(T)=\begin{pmat}[{.}]
\matalg{B}_T \cr\- \matalg{B}_{\Fh[T]} \cr
\end{pmat},
$$
one has for the left-hand side matrix, denoting by $\xleftarrow[T\in\Th]{}$ the usual assembly procedure based on a global DOF map,
$$
\begin{alignedat}{2}
  \matalg{A}_{{\cal T}{\cal F}}\trans\matalg{A}_{{\cal T}{\cal T}}^{-1}\matalg{A}_{{\cal T}{\cal F}}
  &\xleftarrow[T\in\Th]{}
  \matalg{A}_{T\Fh[T]}\trans\matalg{A}_{TT}^{-1}\matalg{A}_{T\Fh[T]},
  &\qquad
  \matalg{A}_{{\cal T}{\cal F}}\trans\matalg{A}_{{\cal T}{\cal T}}^{-1}\matalg{B}_{\cal T}
  &\xleftarrow[T\in\Th]{}
  \matalg{A}_{T\Fh[T]}\trans\matalg{A}_{TT}^{-1}\matalg{B}_T,
  \\
  \matalg{B}_{\cal T}\trans\matalg{A}_{{\cal T}{\cal T}}^{-1}\matalg{A}_{{\cal T}{\cal F}}
  &\xleftarrow[T\in\Th]{}
  \matalg{B}_{T}\trans\matalg{A}_{TT}^{-1}\matalg{A}_{T\Fh[T]},
  &\qquad
  \matalg{B}_{\cal T}\trans\matalg{A}_{{\cal T}{\cal T}}^{-1}\matalg{B}_{\cal T}
  &\xleftarrow[T\in\Th]{}
  \matalg{B}_{T}\trans\matalg{A}_{TT}^{-1}\matalg{B}_{T},
\end{alignedat}
$$
and, similarly, for the right-hand side vector
$$
\matalg{A}_{{\cal T}{\cal F}}\trans\matalg{A}_{{\cal T}{\cal T}}^{-1}\matalg{F}_{\cal T}^n
\xleftarrow[T\in\Th]{}
\matalg{A}_{T\Fh[T]}\trans\matalg{A}_{TT}^{-1}\matalg{F}_T^n,\qquad
\matalg{B}_{\cal T}\trans\matalg{A}_{{\cal T}{\cal T}}^{-1}\matalg{F}_{\cal T}^n
\xleftarrow[T\in\Th]{}
\matalg{B}_T\trans\matalg{A}_{TT}^{-1}\matalg{F}_T^n.
$$

The advantage of implementing~\eqref{eq:biot.ls:reduced} over~\eqref{eq:biot.ls} is that the number of DOFs appearing in the linear system reduces to (compare with~\eqref{eq:total.DOFs})
\begin{equation}\label{eq:coupled.DOFs}
  d\card{\Fhi}N_{d-1}^k + \card{\Th}N_d^k.
\end{equation}
Additionally, since the reduced left-hand side matrix in~\eqref{eq:biot.ls:reduced} does not depend on the time step $n$, it can be assembled (and, possibly, factored) once and for all in a preliminary stage, thus leading to a further reduction in the computational cost.
Finally, for all $T\in\Th$, the local vector $\matalg{U}_T^n$ of element-based displacement DOFs can be recovered from the local right-hand side vector $\matalg{F}_T^n$ and the local vector of face-based displacement DOFs and (element-based) pressure DOFs $(\matalg{U}_{\Fh[T]}^n,\matalg{P}^n_T)$ by the following element-by-element post-processing:
$$
\matalg{U}_T^n=\matalg{A}_{TT}^{-1}\left(
\matalg{F}_T^n - \matalg{A}_{T\Fh[T]}\matalg{U}_{\Fh[T]}^n - \matalg{B}_T\matalg{P}^n_T
\right).
$$


\section{Numerical tests}\label{sec:num.tests}

In this section we present a comprehensive set of numerical tests to assess the properties of our method.

\subsection{Convergence}
We first consider a manufactured regular exact solution to confirm the convergence rates predicted in~\eqref{eq:err.est}.
Specifically, we solve the two-dimensional incompressible Biot problem ($c_{0}=0$) in the unit square domain $\Omega=(0,1)^{2}$ with $\tF=1$ and physical parameters $\mu=1$, $\lambda=1$, and $\diff =1$.
The exact displacement $\vu$ and exact pressure $p$ are given by, respectively
  \begin{align*}
    \vu(\vec{x},t)
    &= \big(
    -\sin(\pi t)\cos(\pi x_1)\cos(\pi x_2), \sin(\pi t)\sin(\pi x_1)\sin(\pi x_2)
    \big),
    \\
    p(\vec{x},t)
    &= - \cos(\pi t)\sin(\pi x_1)\cos(\pi x_2).
  \end{align*}
  The volumetric load is given by
  $$
  \vf(\vec{x},t)
   = 6\pi^2(\sin(\pi t)+ \pi \cos(\pi t)) \times\big(
   -\cos(\pi x_1)\cos(\pi x_2), \sin(\pi x_1)\sin(\pi x_2)
   \big),
  $$
  while $g(\vec{x},t)\equiv 0$. 
  Dirichlet boundary conditions for the displacement and Neumann boundary conditions for the pressure are inferred from exact solutions to $\partial\Omega$.
  
  \begin{figure}
  \centering
    \includegraphics[height=3cm]{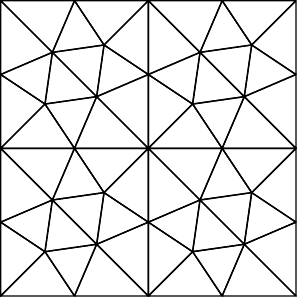}
    \hspace{0.2cm}
    \includegraphics[scale=1]{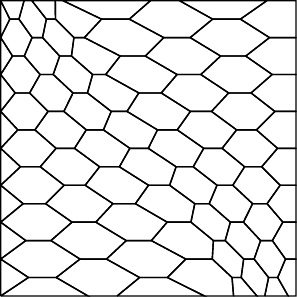}
    \hspace{0.2cm}
    \includegraphics[height=3cm]{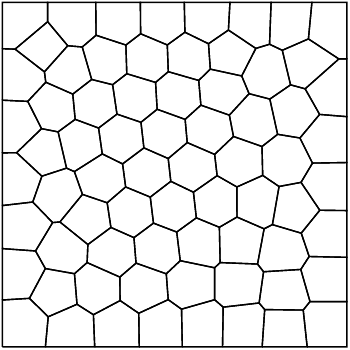}
    \hspace{0.2cm}
    \includegraphics[height=3cm]{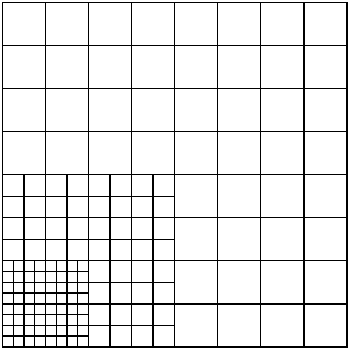}
  \caption{Triangular, hexagonal-dominant, Voronoi, and nonmatching quadrangular meshes for the numerical tests. The triangular and nonmatching quadrangular meshes were originally proposed for the FVCA5 benchmark~\cite{Herbin.Hubert:08}, whereas the hexagonal-dominant mesh is the same used in~\cite[Section~4.2.3]{Di-Pietro.Lemaire:15}. \label{fig:meshes}}
\end{figure}   
   
We consider the triangular, (predominantly) hexagonal, Voronoi, and nonmatching quadrangular mesh families depicted in 
  Figure~\ref{fig:meshes}. The Voronoi mesh family was obtained using the \textsf{PolyMesher} algorithm of~\cite{Talischi2012}.
The nonmatching mesh is simply meant to show that the method supports nonconforming interfaces: refining in the corner has no particular meaning for the selected solution.
The time discretization is based on the second order Backward Differentiation Formula (BDF2); cf.~Remark~\ref{rem:bdf2}.
The time step $\tau$ on the coarsest mesh is taken to be $\pgfmathprintnumber{0.1}/2^{\frac{(k+1)}{2}}$ for every choice of the spatial degree $k$, and it decreases with the mesh size $h$ according to the theoretical convergence rates, thus, if $h_2 = h_1/2$, then $\tau_2 = \tau_1 / 2^{\frac{(k+1)}{2}}$.
Figure~\ref{fig:convergence} displays convergence results for the various mesh families and polynomial degrees up to $3$.
The error measures are $\norm{p_{h}^{N}-\lproj{k}p^{N}}$ for the pressure and $\norm[a,h]{\uvu[h]^{N}-\Ih\vu^{N}}$ for the displacement.
Using the triangle inequality together with~\eqref{eq:err.est} and the approximation properties~\eqref{eq:approx.lproj} of $\lproj{k}$ and~\eqref{eq:rh.approx} of $(\rh\circ\Ih)$, it is a simple matter to prove that these quantities have the same convergence behaviour as the terms in the left-hand side of~\eqref{eq:err.est}.
In all the cases, the numerical results show asymptotic convergence rates that are in agreement with theoretical predictions.
This test was also used to numerically check that the mechanical equilibrium and mass conservation relations of Lemma~\ref{lem:flux_form} hold up to machine precision.
\begin{figure}
  \centering
  \begin{minipage}[b]{0.45\textwidth}
    \includegraphics[scale=0.9]{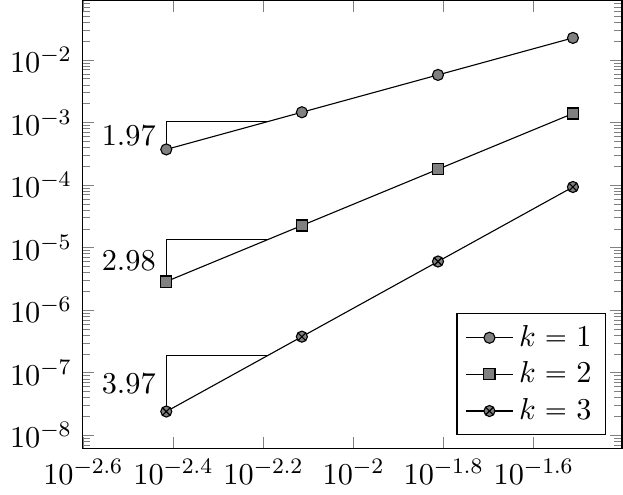}
    \subcaption{$\norm{p_h^N - \lproj{k} p^N}$, triangular}
  \end{minipage}
  \begin{minipage}[b]{0.45\textwidth}
    \includegraphics[scale=0.9]{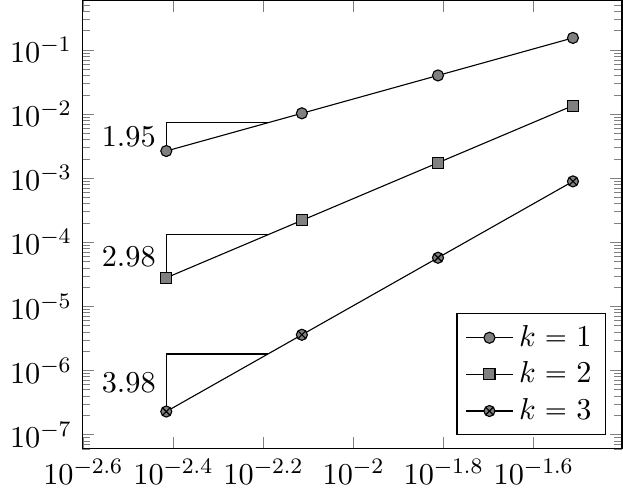}
    \subcaption{$\norm[a,h]{\uvu[h]^N-\Ih\vu^N}$, triangular}
  \end{minipage}
  \vspace{2mm} \\
  \begin{minipage}[b]{0.45\textwidth}
    \includegraphics[scale=0.9]{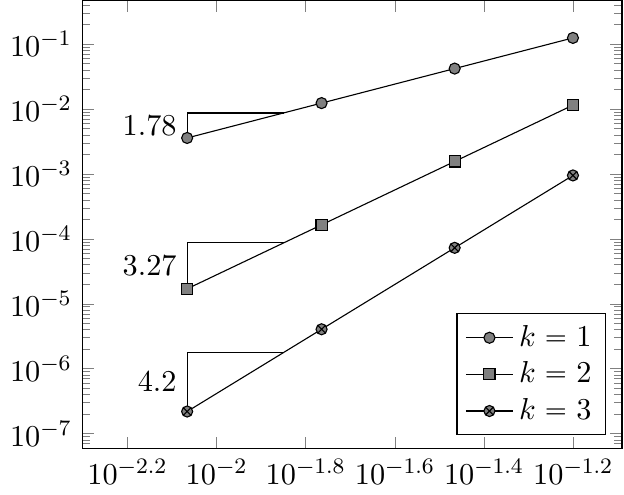}
    \subcaption{$\norm{p_h^N - \lproj{k} p^N}$, hexagonal}
  \end{minipage}
  \begin{minipage}[b]{0.45\textwidth}
    \includegraphics[scale=0.9]{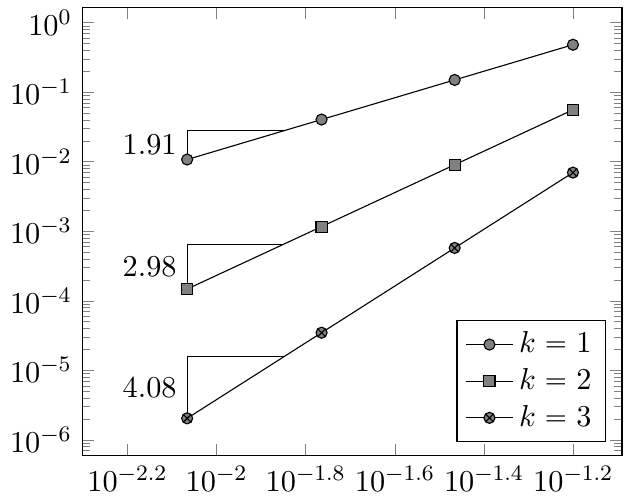}
    \subcaption{$\norm[a,h]{\uvu[h]^N-\Ih\vu^N}$, hexagonal}
  \end{minipage}
  \vspace{2mm} \\
  \begin{minipage}[b]{0.45\textwidth}
    \includegraphics[scale=0.4]{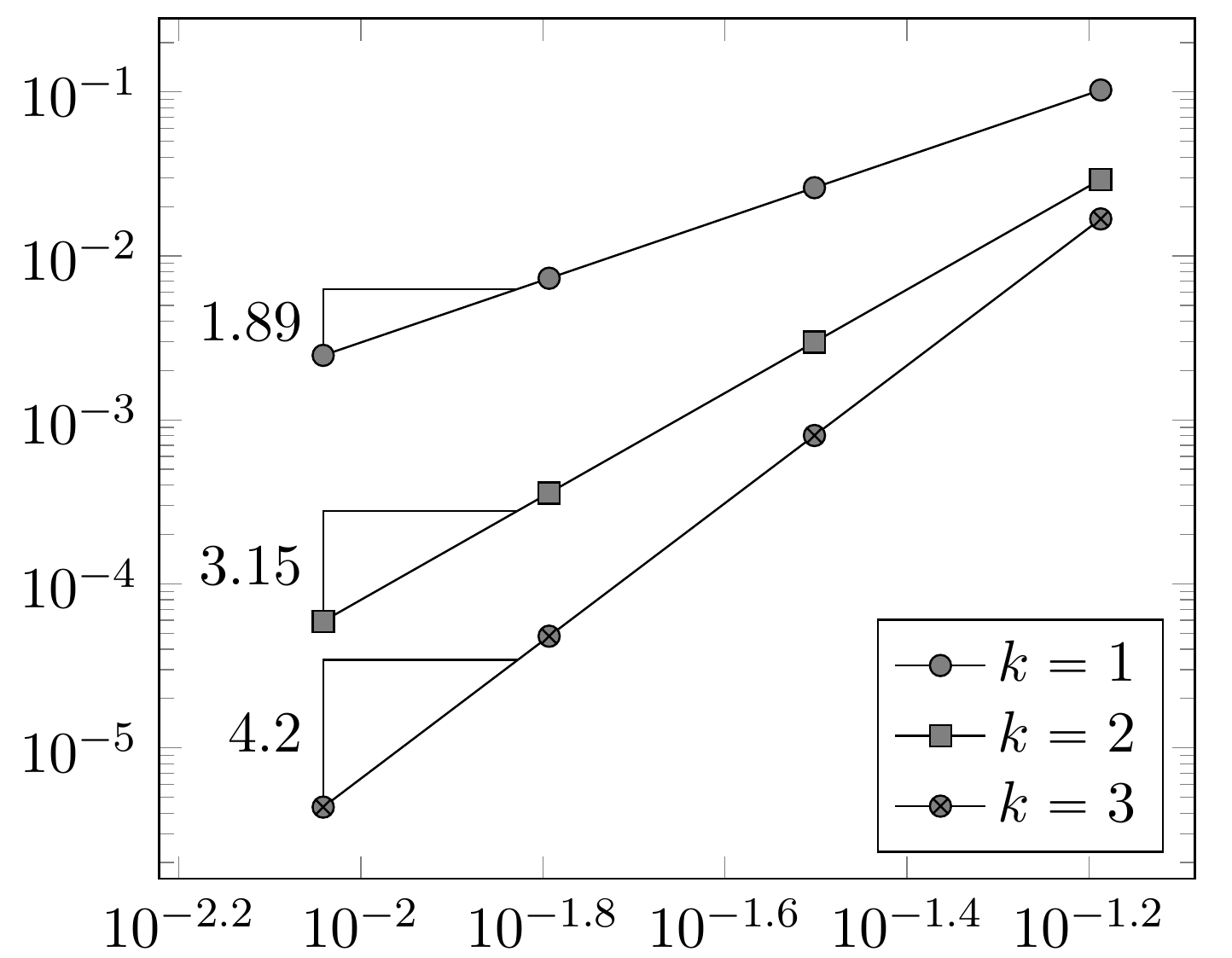}
    \subcaption{$\norm{p_h^N - \lproj{k} p^N}$, Voronoi}
  \end{minipage}
  \begin{minipage}[b]{0.45\textwidth}
    \includegraphics[scale=0.4]{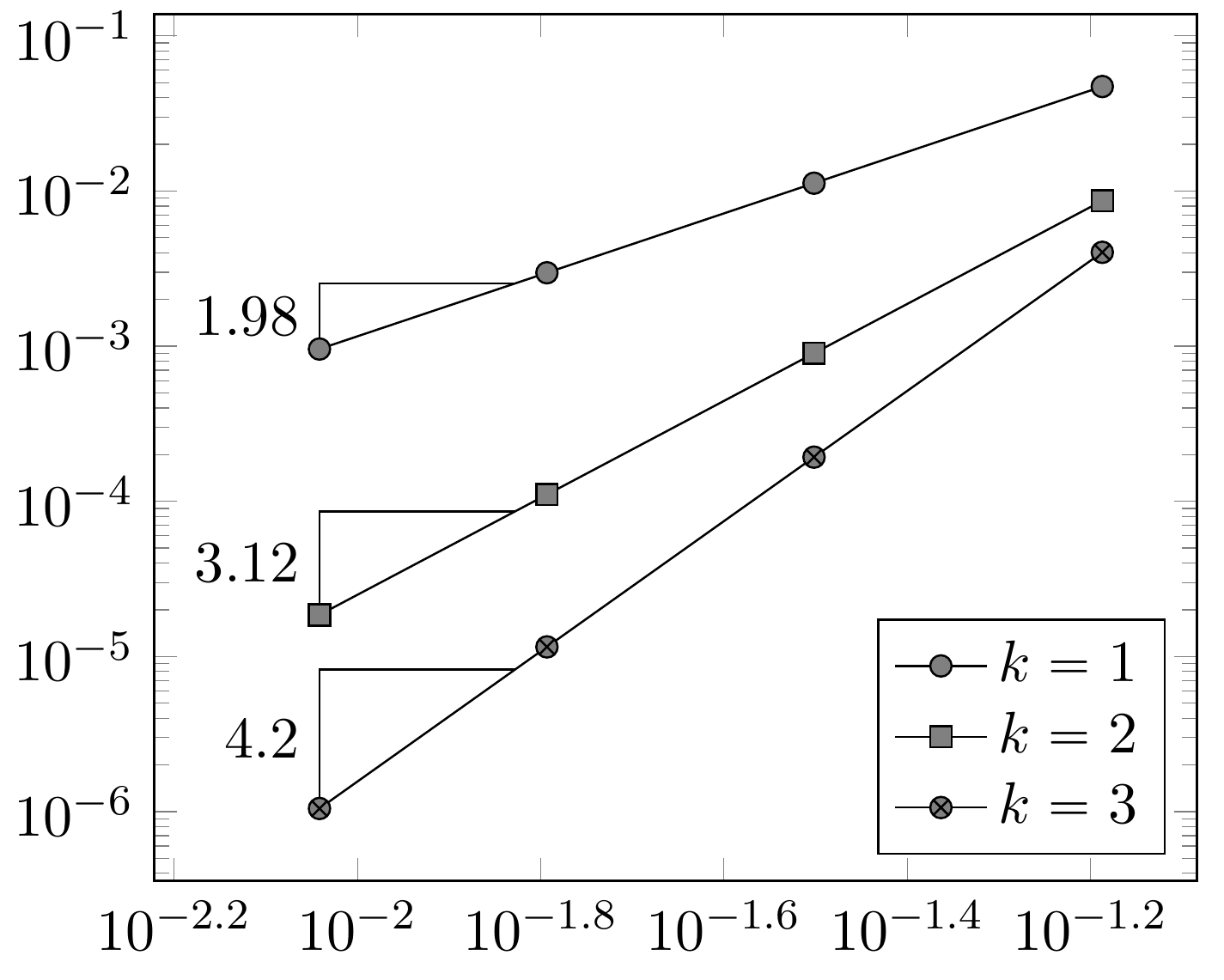}
    \subcaption{$\norm[a,h]{\uvu[h]^N-\Ih\vu^N}$, Voronoi}
  \end{minipage}
  \vspace{2mm} \\
  \begin{minipage}[b]{0.45\textwidth}
    \includegraphics[scale=0.4]{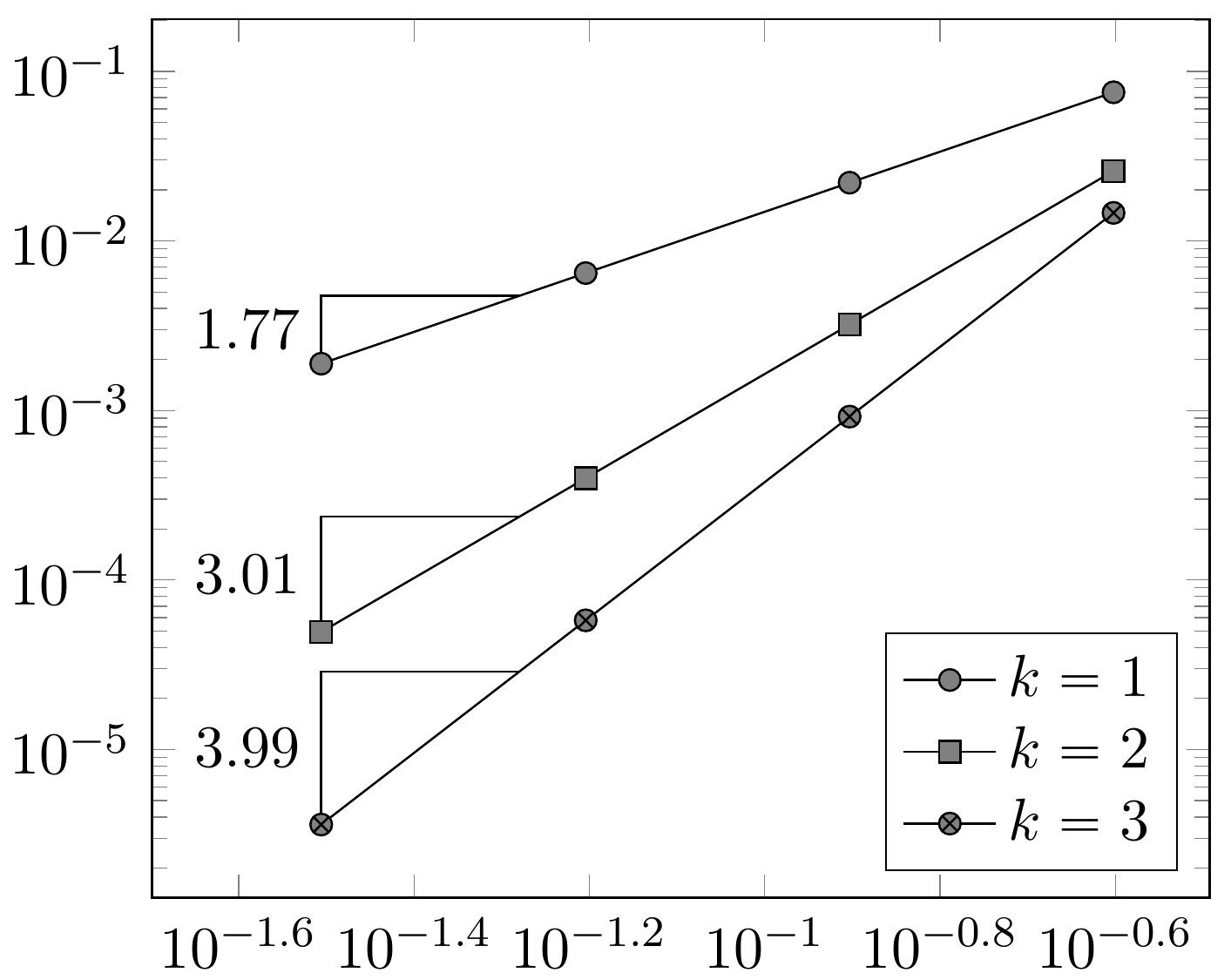}
    \subcaption{$\norm{p_h^N - \lproj{k} p^N}$, nonmatching}
  \end{minipage}
  \begin{minipage}[b]{0.45\textwidth}
    \includegraphics[scale=0.4]{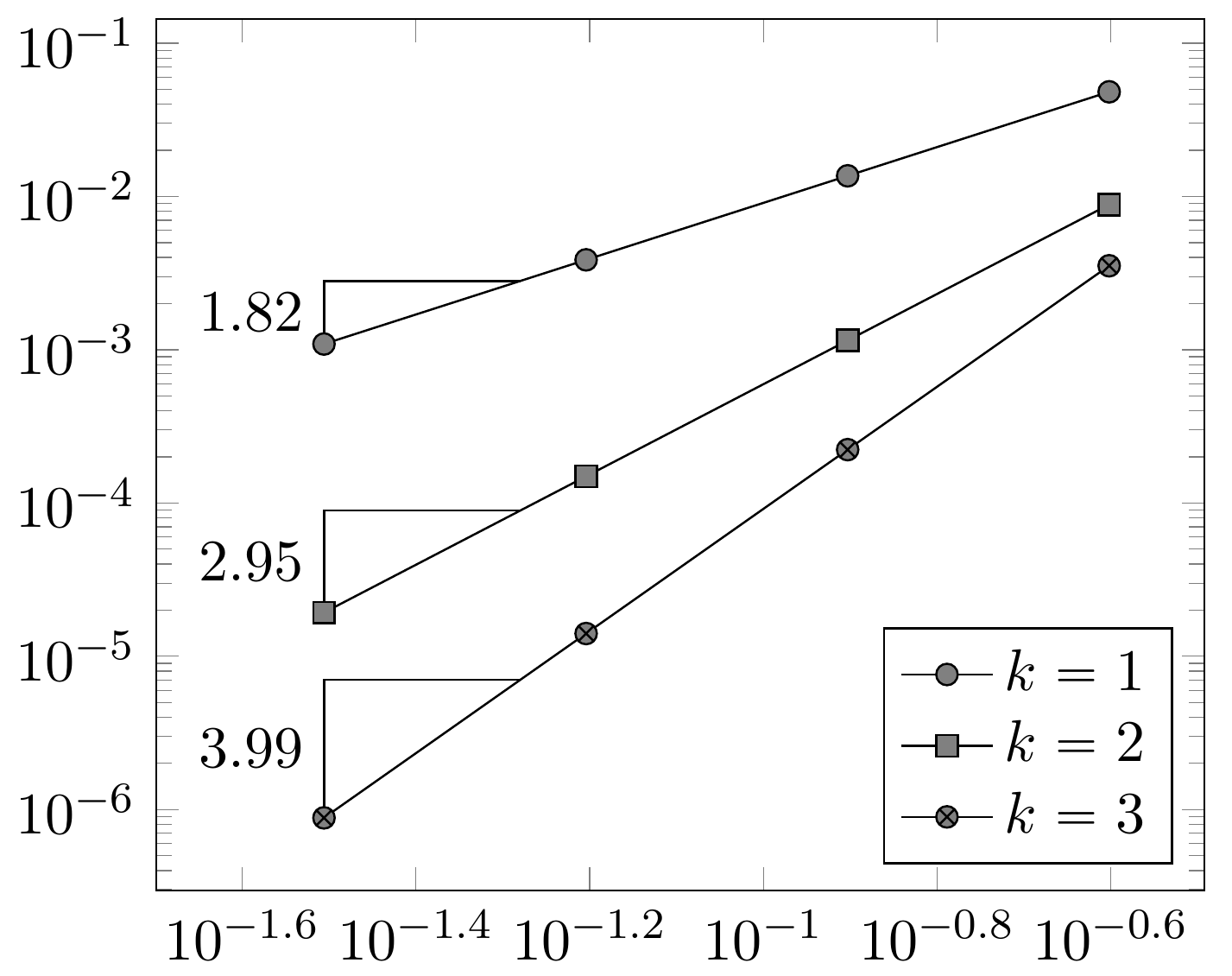}
    \subcaption{$\norm[a,h]{\uvu[h]^N-\Ih\vu^N}$, nonmatching}
  \end{minipage}
  \caption{Errors vs. $h$\label{fig:convergence}}
\end{figure}

The convergence in time was also separately checked considering the method with spatial degree $k=3$ on the hexagonal mesh with mesh size $h=0.0172$ and time step decreasing from $\tau = 0.1$ to $\tau = 0.0125$. With this choice, the time-component of the error is dominant, and Figure~\ref{fig:BDF2} confirms the second order convergence of the BDF2 scheme.

\begin{figure}
  \centering
  \includegraphics{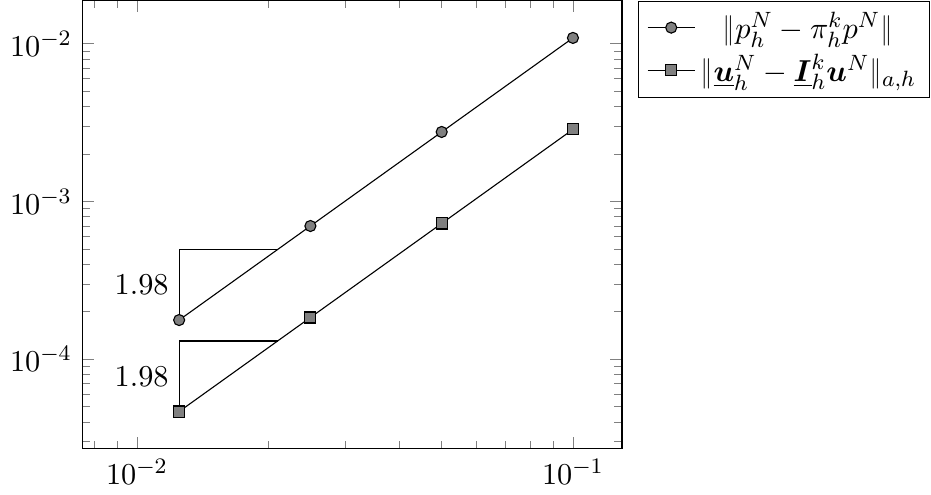}
  \caption{Time convergence rate with BDF2, hexagonal mesh\label{fig:BDF2}}
\end{figure}

\subsection{Barry and Mercer's test case}

A test case more representative of actual physical configurations is that of Barry and Mercer~\cite{Barry.Mercer:99}, for which an exact solution is available (we refer to the cited paper and also to~\cite[Section~4.2.1]{Phillips:05} for the its expression).
We let $\Omega=(0,1)^2$ and consider the following time-independent boundary conditions on $\partial\Omega$
$$
\vu\SCAL\vec{\tau}=0,\qquad
\normal\trans\GRAD\vu\normal=0,\qquad
p=0,
$$
where $\vec{\tau}$ denotes the tangent vector on $\partial\Omega$.
The evolution of the displacement and pressure fields is driven by a periodic pointwise source (mimicking a well) located at $\vec{x}_0=(0.25,0.25)$:
$$
g=\delta(\vec{x}-\vec{x}_0) \sin(\hat{t}),
$$
with normalized time $\hat{t}\eqbydef\beta t$ for $\beta\eqbydef(\lambda + 2\mu)\diff$.
As in~\cite{Phillips.Wheeler:07*1,Rodrigo.Gaspar.ea:15}, we use the following values for the physical parameters:
$$
c_{0} = 0,\qquad
E = \pgfmathprintnumber{1e+5},\qquad
\nu = 0.1,\qquad
\diff = \pgfmathprintnumber{1e-2},
$$
where $E$ and $\nu$ denote Young's modulus and Poisson ratio, respectively, and
$$
\lambda = \frac{E\nu}{(1+\nu)(1-2\nu)},\qquad
\mu = \frac{E}{2(1+\nu)}.
$$
In the injection phase $\hat{t}\in(0,\pi)$, we observe an inflation of the domain, which reaches its maximum at $\hat{t}=\nicefrac{\pi}{2}$; cf.~Figure~\ref{fig:pressure_deformed:pi/2}.
In the extraction phase $\hat{t}\in(\pi,2\pi)$, on the other hand, we have a contraction of the domain which reaches its maximum at $\hat{t}=\nicefrac{3\pi}{2}$; cf.~Figure~\ref{fig:pressure_deformed:3pi/2}.
\begin{figure}
  \centering
  \begin{minipage}{0.45\textwidth}
    \centering
    \includegraphics[height=4.5cm]{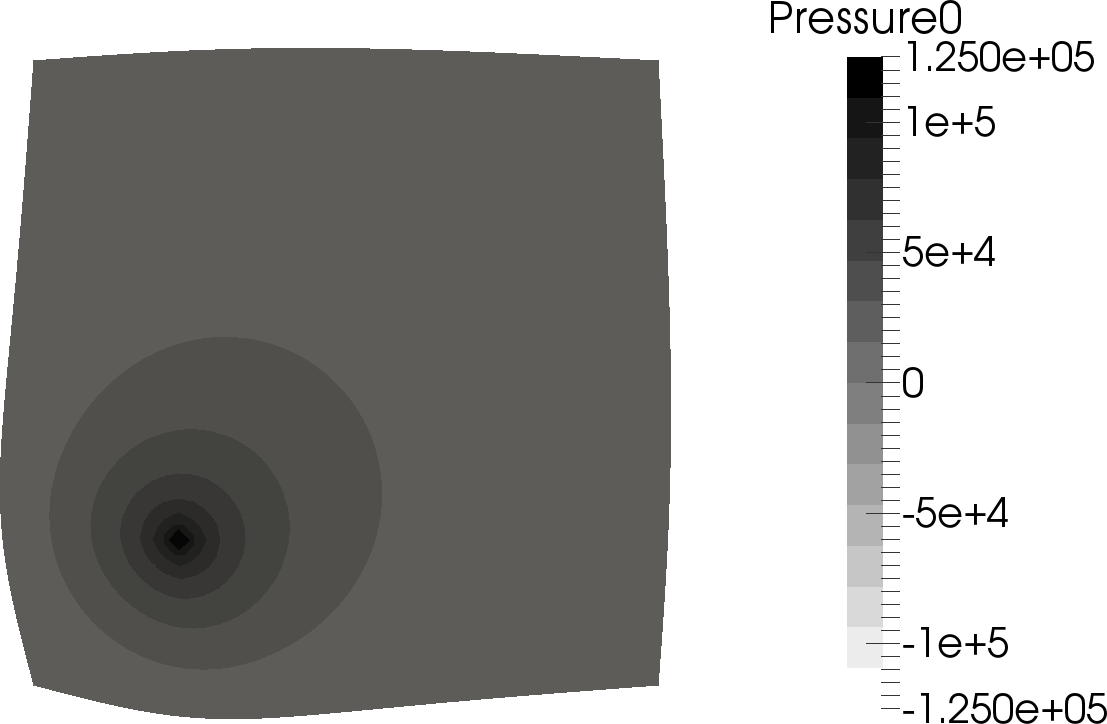}
    \subcaption{$\hat{t}=\nicefrac{\pi}{2}$\label{fig:pressure_deformed:pi/2}}
  \end{minipage}
  \hspace{0.5cm}
  \begin{minipage}{0.45\textwidth}
    \centering
    \includegraphics[height=4.5cm]{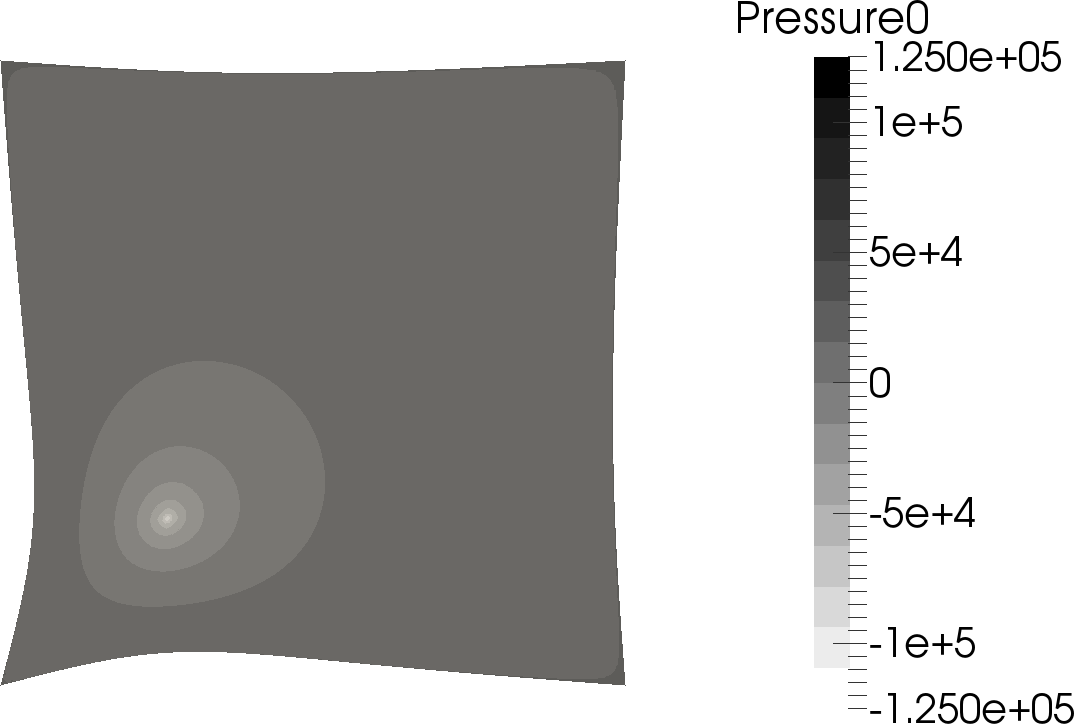}
    \subcaption{$\hat{t}=\nicefrac{3\pi}{2}$\label{fig:pressure_deformed:3pi/2}}
  \end{minipage}
  \caption{Pressure field on the deformed domain at different times for the finest Cartesian mesh containing $\pgfmathprintnumber{4192}$ elements\label{fig:pressure_deformed}}
\end{figure}

The following results have been obtained with the lowest-order version of the method corresponding to $k=1$ (taking advantage of higher orders would require local mesh refinement, which is out of the scope of the present work).
In Figure~\ref{fig:bm_press_prof} we plot the pressure profile at normalized times $\hat{t}=\nicefrac{\pi}{2}$ and $\hat{t}=\nicefrac{3\pi}{2}$ along the diagonal $(0,0)$--$(1,1)$ of the domain.
We consider two Cartesian meshes containing \pgfmathprintnumber{1024} and \pgfmathprintnumber{4096} elements, respectively, as well as two (predominantly) hexagonal meshes containing \pgfmathprintnumber{1072} and \pgfmathprintnumber{4192} elements, respectively.
In all the cases, a time step $\tau=\left(\nicefrac{2\pi}{\beta}\right)\cdot 10^{-2}$ is used.
We note that the behaviour of the pressure is well-captured even on the coarsest meshes.
For the finest hexagonal mesh, the relative error on the pressure in the $L^2$-norm at times $\hat{t}=\nicefrac{\pi}{2}$ and $\hat{t}=\nicefrac{3\pi}{2}$ is $2.85\%$.
\begin{figure}
  \centering
  \begin{minipage}{0.45\textwidth}
    \centering
    \includegraphics[height=4.5cm]{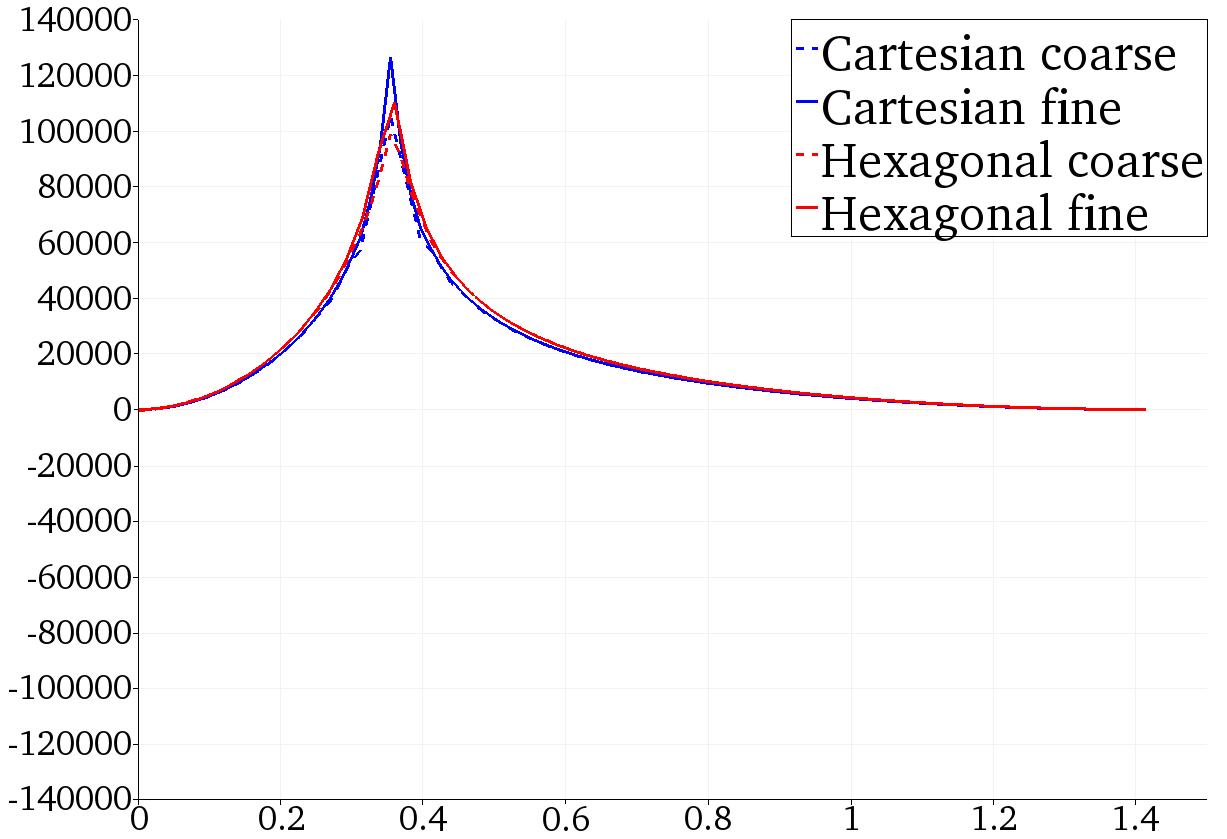}
    \subcaption{$\hat{t} = \nicefrac{\pi}{2}$, $\diff=\pgfmathprintnumber{1e-2}$\label{fig:bm_press_prof:pi/2}}
  \end{minipage}
  \hspace{0.5cm}
  \begin{minipage}{0.45\textwidth}
    \centering
    \includegraphics[height=4.5cm]{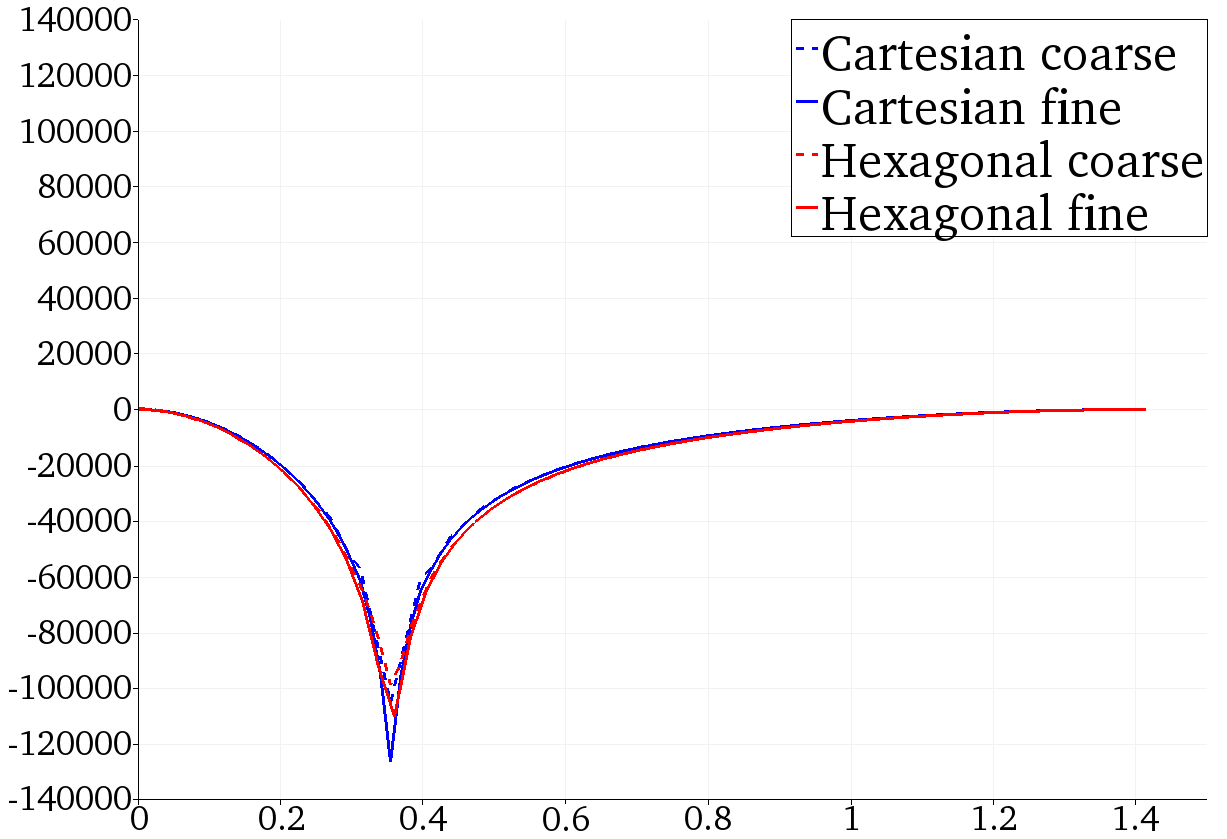}
    \subcaption{$\hat{t} = \nicefrac{3\pi}{2}$, $\diff=\pgfmathprintnumber{1e-2}$\label{fig:bm_press_prof:3pi/2}}
  \end{minipage}
  \caption{Pressure profiles along the diagonal $(0,0)$--$(1,1)$ of the domain for different normalized times $\hat{t}$ and meshes ($k=1$).
    The time step is here $\tau=\left(\nicefrac{2\pi}{\beta}\right)\cdot 10^{-2}$.\label{fig:bm_press_prof}}
\end{figure}

To check the robustness of the method with respect to pressure oscillations for small permeabilities combined with small time steps, we also show in Figure~\ref{fig:bm_press_prof:first_step} the pressure profile after one and two step with $\diff=\pgfmathprintnumber{1e-6}$ and $\tau=\pgfmathprintnumber{1e-4}$ on the Cartesian and hexagonal meshes with 
$\pgfmathprintnumber{4096}$ and $\pgfmathprintnumber{4192}$ elements, respectively. We remark that the first time step is performed using the backward Euler scheme, while the second with the second order BDF2 scheme.
This situation corresponds to the one considered in~\cite[Figure~5.10]{Rodrigo.Gaspar.ea:15} to highlight the onset of spurious oscillations in the pressure.
In our case, small oscillations can be observed for the Cartesian mesh (cf.~Figure~\ref{fig:bm_press_prof:first_step:cart} and~Figure~\ref{fig:bm_press_prof:second_step:cart}), whereas no sign of oscillations in present for the hexagonal mesh (cf.~Figure~\ref{fig:bm_press_prof:first_step:hexa} and~Figure~\ref{fig:bm_press_prof:second_step:hexa}).
One possible conjecture is that increasing the number of element faces contributes to the monotonicity of the scheme.
\begin{figure}
  \centering
  \begin{minipage}{0.45\textwidth}
    \includegraphics[height=4.6cm]{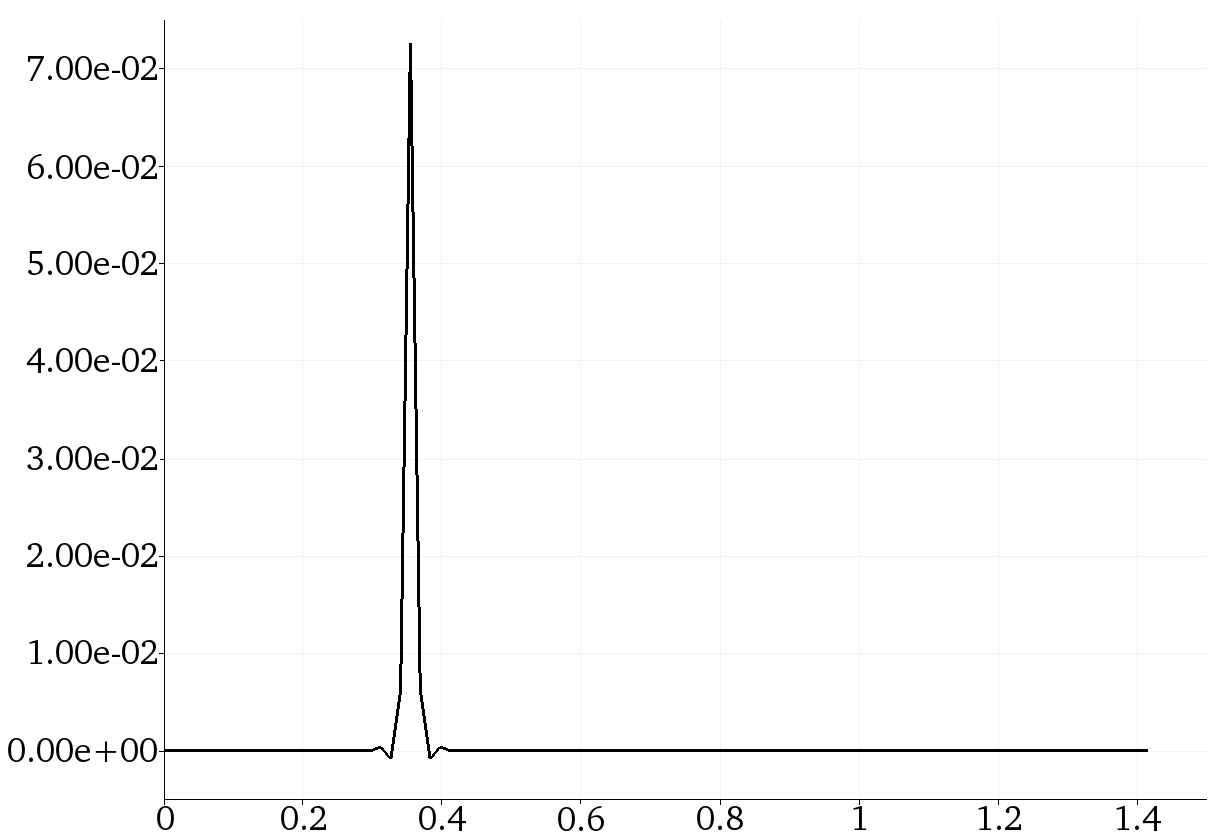}
    \subcaption{Cartesian mesh ($\card{\Th}=\pgfmathprintnumber{4028}$), first step\label{fig:bm_press_prof:first_step:cart}}
  \end{minipage}
  \hspace{0.5cm}
  \begin{minipage}{0.45\textwidth}
    \includegraphics[height=4.6cm]{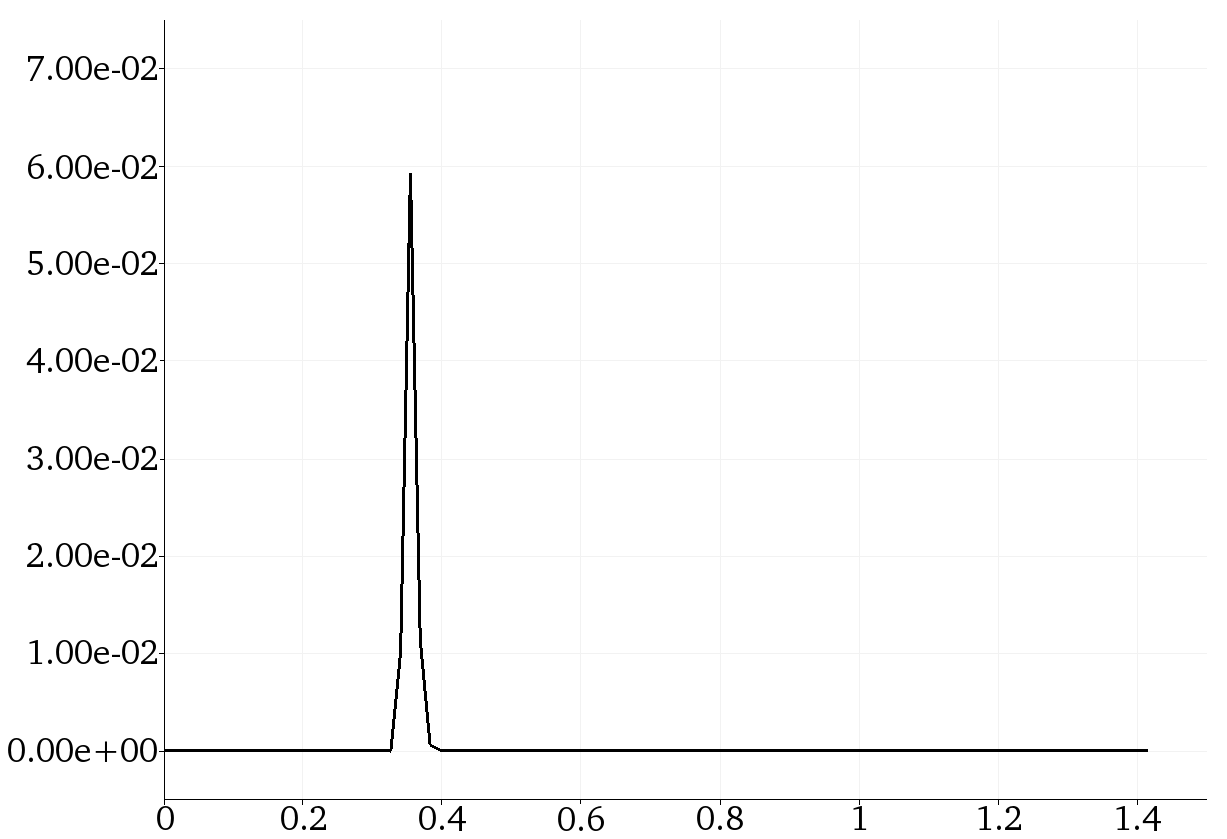}
    \subcaption{Hexagonal mesh ($\card{\Th}=\pgfmathprintnumber{4192}$), first step\label{fig:bm_press_prof:first_step:hexa}}
  \end{minipage} 
  \begin{minipage}{0.45\textwidth}
    \vspace{0.5cm}
    \includegraphics[height=4.6cm]{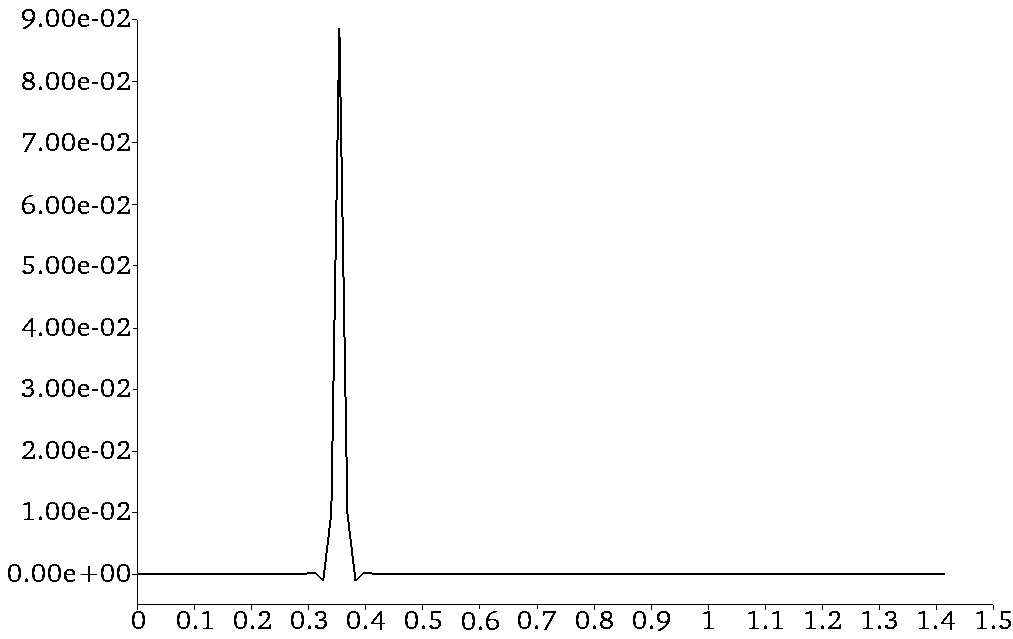}
    \subcaption{Cartesian mesh ($\card{\Th}=\pgfmathprintnumber{4028}$), second step\label{fig:bm_press_prof:second_step:cart}}
  \end{minipage}
  \hspace{0.5cm}
  \begin{minipage}{0.45\textwidth}
    \vspace{0.5cm}
    \includegraphics[height=4.6cm]{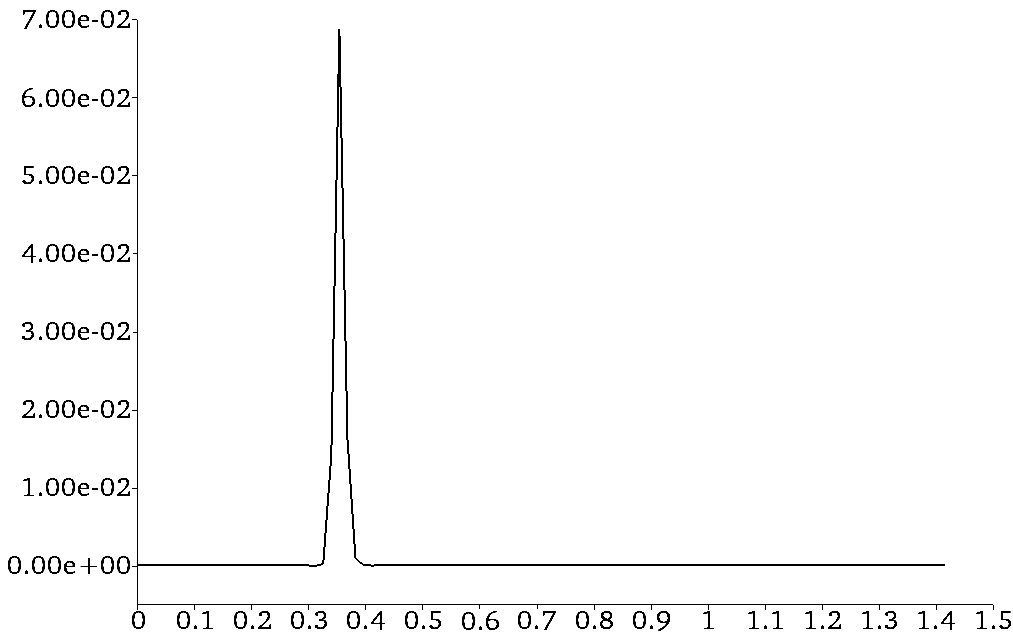}
    \subcaption{Hexagonal mesh ($\card{\Th}=\pgfmathprintnumber{4192}$), second step \label{fig:bm_press_prof:second_step:hexa}}
  \end{minipage} 
  \caption{Pressure profiles along the diagonal $(0,0)$--$(1,1)$ of the domain for $\diff=\pgfmathprintnumber{1e-6}$ and time step $\tau=\pgfmathprintnumber{1e-4}$. Small oscillations are present on the Cartesian mesh (left), whereas no sign of oscillations is present on the hexagonal mesh (right).\label{fig:bm_press_prof:first_step}}
\end{figure}


\paragraph{Acknowledgements}
The work of M. Botti was partially supported by Labex NUMEV (ANR-10-LABX-20) ref. 2014-2-006.
The work of D. A. Di Pietro was partially supported by project HHOMM (ANR-15-CE40-0005).


\appendix

\section{Flux formulation}\label{sec:flux.form}

In this section, we reformulate the discrete problem~\eqref{eq:biot.h} to unveil the local conservation properties of the method.
Before doing so, we need to introduce a few operators and some notation to treat the boundary terms.

We start from the mechanical equilibrium.
Let an element $T\in\Th$ be fixed and denote by $\UpT\eqbydef\Poly[d-1]{k}(\Fh[T])^{d}$ the broken polynomial space of degree $\le k$ on the boundary $\partial T$ of $T$.
We define the boundary operator $\LT:\UpT\to\UpT$ such that, for all $\vphi\in\UpT$,
\begin{equation}\label{eq:LT}
  \restrto{\LT\vphi}{F}
  \eqbydef\lproj[F]{k}\left(
    \restrto{\vphi}{F} - \rT(\vec{0}, (\restrto{\vphi}{F})_{F\in\Fh[T]}) + \lproj[T]{k}\rT(\vec{0}, (\restrto{\vphi}{F})_{F\in\Fh[T]})
  \right)\qquad\forall F\in\Fh[T].
\end{equation}
We also need the adjoint $\LT[k,*]$ of $\LT$ such that
\begin{equation}\label{eq:LTs}
  \forall\vphi\in\UpT,\qquad
  (\LT\vphi,\vpsi)_{\partial T} = (\vphi,\LT[k,*]\vpsi)_{\partial T}
  \qquad\forall\vpsi\in\UpT.
\end{equation}
For a collection of DOFs $\uvv[T]\in\UT$, we denote in what follows by $\vv[\partial T]\in\UpT$ the function in $\UpT$ such that $\restrto{{\vv[\partial T]}}{F}=\vv[F]$ for all $F\in\Fh[T]$.
Finally, it is convenient to define the discrete stress operator $\ST:\UT\to\Poly{k}(T)^{d\times d}$ such that, for all $\uvv[T]\in\UT$,
\begin{equation}\label{eq:ST}
  \ST\uvv[T]\eqbydef 2\mu\GRADs\rT\uvv[T] + \lambda\Id\DT\uvv[T].
\end{equation}

To reformulate the mass conservation equation, we need to introduce the classical lifting operator $\Rh:\Ph\to\Poly{k-1}(\Th)^{d}$ such that, for all $q_{h}\in\Ph$, it holds
\begin{equation}\label{eq:Rh}
  (\Rh q_{h},\vec{\xi}_{h})
  = \sum_{F\in\Fhi} (\jump{q_{h}}, \wavg{\diff\vec{\xi}_{h}}\SCAL\normal_{F})_{F}\qquad
  \forall\vec{\xi}_{h}\in\Poly{k-1}(\Th)^{d}.
\end{equation}

\begin{lemma}[Flux formulation of problem~\eqref{eq:biot.h}]\label{lem:flux_form}
  Problem~\eqref{eq:biot.h} can be reformulated as follows: Find $(\uvu^{n},p_{h}^{n})\in\UhD\times\Ph$ such that it holds, for all $(\uvv,q_{h})\in\UhD\times\Poly{k}(\Th)$ and all $T\in\Th$,
  \begin{subequations}\label{eq:biot.flux}
    \begin{align}
      \label{eq:biot.flux:mech}
      (\ST\uvu[T]^{n} - p_{h}^{n}\Id,\GRADs\vv[T])_{T}
      + \!\! \sum_{F\in\Fh[T]}(\Flux(\uvu[T]^{n},\restrto{p_{h}^{n}}{T}), \vv[F]-\vv[T])_{F} &= (\vf^{n},\vv[T])_{T},
      \\
      \label{eq:biot.flux:flow}
      (c_0\ddt p_h^n,q_h)_T
      - (\ddt\vu[T]^{n} - \diff(\GRADh p_{h}^{n} - \Rh p_{h}^{n}),\GRADh q_{h} )_{T}
      - \!\! \sum_{F\in\Fh[T]}(\flux(\ddt\vu[F]^{n},p_{h}^{n}),\restrto{q_{h}}{T})_{F}
      &= (g^{n},q_{h}),
    \end{align}
  \end{subequations}
  where, for all $T\in\Th$ and all $F\in\Fh[T]$, the numerical traction $\Flux:\UT\times\Poly{k}(T)\to\Poly[d-1]{k}(F)^{d}$ and mass flux $\flux:\Poly[d-1]{k}(F)^d\times\Poly{k}(\Th)\to\Poly[d-1]{k}(F)$ are such that
  \begin{equation}\label{eq:fluxes}
    \begin{aligned}
      \Flux(\uvv[T],q) &\eqbydef 
      \big( \ST\uvv[T] - q\Id \big) \normal_{TF}
      + (2\mu)\LT[k,*](\fh^{-1}\LT(\vv[\partial T] - \vv[T])),
      \\
      \flux(\vv[F],q_{h}) &\eqbydef
      \begin{cases}
        \big( -\vv[F]^{n} + \wavg{\diff\GRADh q_{h}} \big)\SCAL\normal_{TF}
        - \frac{\varsigma\lambda_{\diff,F}}{h_{F}}\jump{q_{h}}(\normal_{TF}\SCAL\normal_F)
        &\text{if $F\in\Fhi$},
        \\
        0 &\text{otherwise,}
      \end{cases}
    \end{aligned}
  \end{equation}
  with $\fh\in\Poly{0}(\Fh[T])$ such that $\restrto{\fh}{F}=h_{F}$ for all $F\in\Fh[T]$, and it holds, for all $F\in\Fhi$ such that $F\in\Fh[T_{1}]\cap\Fh[T_{2}]$,
  \begin{subequations}\label{eq:flux.cons}
    \begin{align}
      \label{eq:flux.cons:mech}
      \Flux[T_{1}F](\uvu[T_{1}]^{n},\restrto{p_{h}^{n}}{T_{1}})
      + \Flux[T_{2}F](\uvu[T_{2}]^{n},\restrto{p_{h}^{n}}{T_{2}}) &= \vec{0}
      \\
      \label{eq:flux.cons:flow}
      \flux[T_{1} F](\ddt\vu[F]^{n},p_{h}^{n}) 
      + \flux[T_{2}F](\ddt\vu[F]^{n},p_{h}^{n}) &= 0.
    \end{align}
  \end{subequations}
\end{lemma}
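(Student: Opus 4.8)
The plan is to obtain the flux formulation by localising the variational problem~\eqref{eq:biot.h} element by element and integrating by parts so as to expose the boundary contributions that define the numerical traction and mass flux; the two conservation identities~\eqref{eq:flux.cons} are then read off by testing against conveniently supported functions. Throughout I would exploit that the element-based displacement DOFs and the broken pressure DOFs are private to each element, while the face-based displacement DOFs are shared across interfaces, so that summing the per-element identities against a global test function reproduces~\eqref{eq:biot.h}.

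For the mechanical equilibrium, fix $T\in\Th$ and a local test function $\uvv[T]\in\UT$. Applying the definition~\eqref{eq:rT} of $\rT$ to $\uvv[T]$ with test function $\vw=\rT\uvu[T]^{n}$, and~\eqref{eq:DT} to $\uvv[T]$ with $q=\DT\uvu[T]^{n}$, the terms $2\mu(\GRADs\rT\uvu[T]^{n},\GRADs\rT\uvv[T])_{T}$ and $\lambda(\DT\uvu[T]^{n},\DT\uvv[T])_{T}$ become the volume term $(\ST\uvu[T]^{n},\GRADs\vv[T])_{T}$ plus the boundary terms $\sum_{F\in\Fh[T]}(\ST\uvu[T]^{n}\normal_{TF},\vv[F]-\vv[T])_{F}$, where I recognise the discrete stress~\eqref{eq:ST} and use $\DIV\vv[T]=\Id:\GRADs\vv[T]$. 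The coupling term $b_{T}(\uvv[T],\restrto{p_{h}^{n}}{T})=-(\DT\uvv[T],\restrto{p_{h}^{n}}{T})_{T}$ is treated identically via~\eqref{eq:DT}, producing $-(p_{h}^{n}\Id,\GRADs\vv[T])_{T}$ together with $-\sum_{F\in\Fh[T]}(p_{h}^{n}\Id\,\normal_{TF},\vv[F]-\vv[T])_{F}$. The stabilisation $2\mu\,s_{T}$ is the delicate piece: I would show the identity $\vec{\Delta}_{TF}^{k}\uvv[T]=-\restrto{\LT(\vv[\partial T]-\vv[T])}{F}$ by splitting $\uvv[T]=(\vv[T],(\restrto{\vv[T]}{F})_{F})+(\vec 0,(\vv[F]-\vv[T])_{F})$, using that $\rT$ reduces to the identity on the first summand up to a rigid-body motion annihilated by $\lproj[T]{k}$ and $\lproj[F]{k}$, and comparing with~\eqref{eq:LT}; since the residual enters $s_{T}$ quadratically the sign is immaterial, and rewriting $s_{T}$ through $\fh^{-1}$ and the adjoint $\LT[k,*]$ of~\eqref{eq:LTs} yields the stabilisation part $2\mu\LT[k,*](\fh^{-1}\LT(\vu[\partial T]^{n}-\vu[T]^{n}))$ of the traction. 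Collecting the three pieces reproduces~\eqref{eq:biot.flux:mech} with $\Flux$ as in~\eqref{eq:fluxes}. Testing~\eqref{eq:biot.h:mech} with a function supported on a single interface $F$ (all element DOFs and all other face DOFs set to zero, which is admissible in $\UhD$ precisely when $F\in\Fhi$) makes $l_{h}^{n}$ vanish and leaves $(\Flux[T_{1}F]+\Flux[T_{2}F],\vv[F])_{F}=0$, i.e.~\eqref{eq:flux.cons:mech}.

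For the mass conservation I would localise~\eqref{eq:biot.h:flow} against $q_{h}$ supported in a single element $T$, which is legitimate since pressure test functions are fully broken. The coupling term is rewritten with the integration-by-parts form~\eqref{eq:DT.bis} of $\DT$, giving the volume term $-(\ddt\vu[T]^{n},\GRADh q_{h})_{T}$ and the boundary term $\sum_{F\in\Fh[T]}(\ddt\vu[F]^{n}\SCAL\normal_{TF},\restrto{q_{h}}{T})_{F}$. In $c_{h}$, the consistency contribution $-\sum_{F\in\Fhi}(\jump{p_{h}^{n}},\wavg{\diff\GRADh q_{h}}\SCAL\normal_{F})_{F}$ involves the gradient of the test function and is therefore recast as a volume integral against $\GRADh q_{h}$ by the very definition~\eqref{eq:Rh} of the lifting $\Rh$ (taking $\vec{\xi}_{h}=\GRADh q_{h}\in\Poly{k-1}(\Th)^{d}$); together with the bulk term $(\diff\GRADh p_{h}^{n},\GRADh q_{h})$ it assembles the diffusive volume contribution appearing in~\eqref{eq:biot.flux:flow}. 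The remaining face terms of $c_{h}$, namely $-\wavg{\diff\GRADh p_{h}^{n}}\SCAL\normal_{F}$ and the penalty $\tfrac{\varsigma\lambda_{\diff,F}}{h_{F}}\jump{p_{h}^{n}}$, are reorganised element by element using $\jump{q_{h}}$ and the orientation factor $\normal_{TF}\SCAL\normal_{F}$, and combine with the coupling boundary term to yield exactly the mass flux $\flux(\ddt\vu[F]^{n},p_{h}^{n})$ of~\eqref{eq:fluxes}, producing~\eqref{eq:biot.flux:flow}. Here~\eqref{eq:flux.cons:flow} is immediate from the definition of $\flux$: since $\vu[F]^{n}$, $\wavg{\diff\GRADh p_{h}^{n}}$ and $\jump{p_{h}^{n}}$ are single-valued on $F$ while $\normal_{T_{2}F}=-\normal_{T_{1}F}$ and $\normal_{T_{1}F}\SCAL\normal_{F}+\normal_{T_{2}F}\SCAL\normal_{F}=0$, the two contributions cancel.

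The main obstacle is the stabilisation rewriting of the second step: identifying the HHO residual $\vec{\Delta}_{TF}^{k}$ with $\LT$ requires using the defining relation of $\rT$ to separate the $\vv[T]$-part from the $(\vv[F]-\vv[T])$-part up to a rigid motion killed by the projectors, and then invoking the adjoint $\LT[k,*]$ to transfer $\fh^{-1}\LT(\cdot)$ back onto $\vv[\partial T]-\vv[T]$. Beyond this, the only real care needed is sign and orientation bookkeeping on interfaces (the role of $\normal_{TF}\SCAL\normal_{F}$ and of the fixed choice $\normal_{F}=\normal_{T_{1}F}$), which is routine but must be carried out consistently for both conservation statements.
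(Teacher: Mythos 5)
Your proposal is correct and follows essentially the same route as the paper's proof: you rewrite $a_T$ and $b_T$ through the defining relations~\eqref{eq:rT} and~\eqref{eq:DT}--\eqref{eq:DT.bis}, recast the stabilization as a boundary term involving $\LT$ and its adjoint $\LT[k,*]$, rewrite $c_h$ using the lifting $\Rh$, and obtain~\eqref{eq:flux.cons:mech} by testing with a single-interface-supported function and~\eqref{eq:flux.cons:flow} from the single-valuedness of the flux together with $\normal_{T_2F}=-\normal_{T_1F}$. The only difference is that you prove inline the identity $\vec{\Delta}_{TF}^k\uvv[T]=-\restrto{\LT(\vv[\partial T]-\vv[T])}{F}$ underlying the stabilization rewriting and the element-wise reorganisation of the SWIP form, both of which the paper simply imports by citation.
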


\begin{proof}
(1) \emph{Proof of~\eqref{eq:biot.flux:mech}.}
Proceeding as in~\cite[Section~3.1]{Cockburn.Di-Pietro.ea:15}, the stabilization bilinear form $s_{T}$ defined by~\eqref{eq:sT} can be rewritten as
$$
  s_{T}(\uvw[T],\uvv[T])
  = \sum_{F\in\Fh[T]} (\LT[k,*](\fh^{-1}\LT(\vw[\partial T]-\vw[T])),\vv[F]-\vv[T])_{F}.
$$
Therefore, using the definitions~\eqref{eq:rT} of $\rT\uvv[T]$ with $\vw=\rT\uvu[T]^{n}$ and~\eqref{eq:DT} of $\DT\uvv[T]$ with $q=\restrto{p_{h}^{n}}{T}$, and recalling the definition~\eqref{eq:ST} of $\ST$, one has
\begin{equation}\label{eq:flux:mech:aT}
    a_{T}(\uvu[T]^{n},\uvv[T])
    = (\ST\uvu[T]^{n},\GRADs\vv[T])_{T}
    + \!\! \sum_{F\in\Fh[T]} (\ST\uvu[T]^{n}\normal_{TF} + (2\mu) \LT[k,*](\fh^{-1}\LT(\vu[\partial T]^{n} - \vu[T]^{n})), \vv[F] - \vv[T])_{F}. 
\end{equation}
On the other hand, using again the definition~\eqref{eq:DT} of $\DT\uvv[T]$ with $q=\restrto{p_{h}^{n}}{T}$, one has
\begin{equation}\label{eq:flux:mech:bT}
    b_{T}(\uvv[T],\restrto{p_{h}^{n}}{T})
    = -(p_{h}^{n}\Id,\GRADs\vv[T])_{T} - \sum_{F\in\Fh[T]}
(\restrto{p_{h}^{n}}{T}\normal_{TF}, \vv[F]-\vv[T])_{F}.
\end{equation}
Equation~\eqref{eq:biot.flux:mech} follows summing~\eqref{eq:flux:mech:aT} and~\eqref{eq:flux:mech:bT}.
\\
(2) \emph{Proof of~\eqref{eq:biot.flux:flow}.}
Using the definition~\eqref{eq:DT.bis} of $\DT$ with $\uvv[T]=\ddt\uvu[T]^{n}$ and $q=\restrto{q_{h}}{T}$, it is inferred that
\begin{equation}\label{eq:flux:flow:bT}
  b_{T}(\ddt\uvu[T]^{n},q_{h}) = 
  -(\ddt\vu[T]^{n}, \GRADh q_{h})_{T}
  + \sum_{F\in\Fh[T]}  (\ddt\vu[F]^{n}\SCAL\normal_{TF}, \restrto{q_{h}}{T})_{F}.
\end{equation}
On the other hand, adapting the results~\cite[Section~4.5.5]{Di-Pietro.Ern:12} to the homogeneous Neumann boundary condition~\eqref{eq:biot.strong:bc.p}, it is inferred
\begin{multline}\label{eq:flux:flow:cT}
  c_{h}(p_{h}^{n},q_{h})
  = \sum_{T\in\Th}\Bigg\{
    (\diff(\GRADh p_{h}^{n} - \Rh p_{h}^{n})\SCAL\GRADh q_{h})_{T}
    \\
    - \sum_{F\in\Fh[T]\cap\Fhi} (\wavg{\diff\GRADh p_{h}^{n}}\SCAL\normal_{TF} - \frac{\varsigma\lambda_{\diff,F}}{h_{F}}\jump{p_{h}^{n}}(\normal_{TF}\SCAL\normal_F), \restrto{q_{h}}{T})_{F}
  \Bigg\}.
\end{multline}
Equation~\eqref{eq:biot.flux:flow} follows summing~\eqref{eq:flux:flow:bT} and~\eqref{eq:flux:flow:cT}.
\\
(3) \emph{Proof of~\eqref{eq:flux.cons}.} To prove~\eqref{eq:flux.cons:mech}, let an internal face $F\in\Fhi$ be fixed, and make $\uvv[h]$ in~\eqref{eq:flux.cons:mech} such that $\vv[T]\equiv\vec{0}$ for all $T\in\Th$, $\vv[F']\equiv\vec{0}$ for all $F'\in\Fh\setminus\{F\}$, let $\vv[F]$ span $\Poly[d-1]{k}(F)$ and rearrange the sums.
The mass flux conservation~\eqref{eq:flux.cons:flow} follows immediately from the expression of $\flux$ observing that, for all $(\uvv[h],q_h)\in\Uh\times\Ph$ and all $F\in\Fhi$, the quantity 
$$
\big(-\vv[F] + \wavg{\diff\GRADh q_h} \big)\SCAL\normal_F - \frac{\varsigma\lambda_{\diff,F}}{h_F}\jump{q_h}
$$
is single-valued on $F$.
\end{proof}

Let now an element $T\in\Th$ be fixed.
Choosing as test functions in~\eqref{eq:biot.flux:mech} $\uvv\in\UhD$ such that $\vv[F]\equiv\vec{0}$ for all $F\in\Fh$, $\vv[T']\equiv\vec{0}$ for all $T'\in\Th\setminus\{T\}$, and $\vv[T]$ spans $\Poly{k}(T)^{d}$, we infer the following local mechanical equilibrium relation:
For all $\vv[T]\in\Poly{k}(T)^{d}$,
$$
(\ST\uvu[T]^{n} - p_{h}^{n}\Id,\GRADs\vv[T])_{T}
- \sum_{F\in\Fh[T]}(\Flux(\uvu[T]^{n},\restrto{p_{h}^{n}}{T}), \vv[T])_{F} = (\vf^{n},\vv[T])_{T}.
$$
Similarly, selecting $q_{h}$ in~\eqref{eq:biot.flux:flow} such that $\restrto{q_{h}}{T'}\equiv 0$ for all $T'\in\Th\setminus\{T\}$ and $q_{T}\eqbydef\restrto{q_{h}}{T}$ spans $\Poly{k}(T)$, we infer the following local mass conservation relation:
For all $q_{T}\in\Poly{k}(T)$,
$$
(c_0\ddt p_h^n,q_T)_T
- (\ddt\vu[T]^{n} - \diff(\GRADh p_{h}^{n} - \Rh p_{h}^{n}),\GRAD q_T )_{T}
- \sum_{F\in\Fh[T]}(\flux(\ddt\vu[F]^{n},p_{h}^{n}),q_T)_{F}
= (g^{n},q_T).
$$
To actually compute the numerical fluxes defined by~\eqref{eq:fluxes}, besides the operator $\ST$ defined by~\eqref{eq:ST} (which is readily available once $\rT$ and $\DT$ have been computed; cf.~\eqref{eq:rT} and~\eqref{eq:DT}, respectively), one also needs to compute the operators $\LT$ and $\LT[k,*]$. The latter operation can be performed at marginal cost, since it only requires to invert the face mass matrices of $\Poly[d-1]{k}(F)$ for all $F\in\Fh[T]$.


\begin{footnotesize}
  \bibliographystyle{plain}
  \bibliography{bho}

\begin{thebibliography}{10}

\bibitem{Barry.Mercer:99}
S.~Barry and G.~Mercer.
\newblock Exact solution for two-dimensional time dependent flow and
  deformation within a poroelastic medium.
\newblock {\em J. Appl. Mech.}, 66(2):536--540, 1999.

\bibitem{Bassi.Botti.ea:12}
F.~Bassi, L.~Botti, A.~Colombo, D.~A. Di~Pietro, and P.~Tesini.
\newblock On the flexibility of agglomeration based physical space
  discontinuous {Galerkin} discretizations.
\newblock {\em J. Comput. Phys.}, 231(1):45--65, 2012.

\bibitem{Bassi.Rebay:97}
F.~Bassi, S.~Rebay, G.~Mariotti, S.~Pedinotti, and M.~Savini.
\newblock A high-order accurate discontinuous finite element method for
  inviscid and viscous turbomachinery flows.
\newblock {\em Proceedings of the 2nd European Conference on Turbomachinery
  Fluid Dynamics and Thermodynamics}, pages 99--109, 1997.

\bibitem{Beirao-da-Veiga.Brezzi.ea:13*1}
L.~Beir\~{a}o~da Veiga, F.~Brezzi, and L.~D. Marini.
\newblock Virtual elements for linear elasticity problems.
\newblock {\em SIAM J. Numer. Anal.}, 2(51):794--812, 2013.

\bibitem{Biot:41}
M.~A. Biot.
\newblock General theory of threedimensional consolidation.
\newblock {\em J. Appl. Phys.}, 12(2):155--164, 1941.

\bibitem{Biot:55}
M.~A. Biot.
\newblock Theory of elasticity and consolidation for a porous anisotropic
  solid.
\newblock {\em J. Appl. Phys.}, 26(2):182--185, 1955.

\bibitem{Boffi.Brezzi.ea:13}
D.~Boffi, F.~Brezzi, and M.~Fortin.
\newblock {\em Mixed finite element methods and applications}, volume~44 of
  {\em Springer Series in Computational Mathematics}.
\newblock Springer, Berlin Heidelberg, 2013.

\bibitem{Brenner:04}
S.~C. Brenner.
\newblock Korn's inequalities for piecewise {$H\sp 1$} vector fields.
\newblock {\em Math. Comp.}, 73(247):1067--1087, 2004.

\bibitem{Cangiani.Georgoulis.ea:14}
A.~Cangiani, E.~H. Georgoulis, and P.~Houston.
\newblock {$hp$}-version discontinuous {G}alerkin methods on polygonal and
  polyhedral meshes.
\newblock {\em Math. Models Methods Appl. Sci.}, 24(10):2009--2041, 2014.

\bibitem{Cockburn.Di-Pietro.ea:15}
B.~Cockburn, D.~A. Di~Pietro, and A.~Ern.
\newblock Bridging the {Hybrid High-Order} and {Hybridizable Discontinuous
  Galerkin Methods}.
\newblock {\em M2AN Math. Model. Numer. Anal.}, 2015.
\newblock Published online.
  DOI~\href{http://dx.doi.org/10.1051/m2an/2015051}{10.1051/m2an/2015051}.

\bibitem{Di-Pietro.Droniou:15}
D.~A. Di~Pietro and J.~Droniou.
\newblock A {Hybrid High-Order} method for {Leray--Lions} elliptic equations on
  general meshes.
\newblock Submitted. Preprint
  arXiv~\href{http://arxiv.org/abs/1508.01918}{1508.01918}, August 2015.

\bibitem{Di-Pietro.Drouniou:15}
D.~A. Di~Pietro, J.~Droniou, and A.~Ern.
\newblock A discontinuous-skeletal method for advection-diffusion-reaction on
  general meshes.
\newblock {\em SIAM J. Numer. Anal.}, 53(5):2135--2157, 2015.

\bibitem{Di-Pietro.Ern:12}
D.~A. Di~Pietro and A.~Ern.
\newblock {\em Mathematical aspects of discontinuous {G}alerkin methods},
  volume~69 of {\em Math\'ematiques \& Applications}.
\newblock Springer-Verlag, Berlin, 2012.

\bibitem{Di-Pietro.Ern:15}
D.~A. Di~Pietro and A.~Ern.
\newblock A hybrid high-order locking-free method for linear elasticity on
  general meshes.
\newblock {\em Comput. Meth. Appl. Mech. Engrg.}, 283:1--21, 2015.

\bibitem{Di-Pietro.Ern:16}
D.~A. Di~Pietro and A.~Ern.
\newblock A family of arbitrary order mixed methods for heterogeneous
  anisotropic diffusion on general meshes.
\newblock {\em IMA J. Numer. Anal.}, 2016.
\newblock Accepted for publication. Preprint
  \href{http://hal.archives-ouvertes.fr/hal-00918482}{hal-00918482}.

\bibitem{Di-Pietro.Ern.ea:08}
D.~A. Di~Pietro, A.~Ern, and J.-L. Guermond.
\newblock Discontinuous {G}alerkin methods for anisotropic semi-definite
  diffusion with advection.
\newblock {\em SIAM J. Numer. Anal.}, 46(2):805--831, 2008.

\bibitem{Di-Pietro.Ern.ea:14}
D.~A. Di~Pietro, A.~Ern, and S.~Lemaire.
\newblock An arbitrary-order and compact-stencil discretization of diffusion on
  general meshes based on local reconstruction operators.
\newblock {\em Comput. Methods Appl. Math.}, 14(4):461--472, 2014.

\bibitem{Di-Pietro.Lemaire:15}
D.~A. Di~Pietro and S.~Lemaire.
\newblock An extension of the {Crouzeix--Raviart} space to general meshes with
  application to quasi-incompressible linear elasticity and {Stokes} flow.
\newblock {\em Math. Comp.}, 84(291):1--31, 2015.

\bibitem{Dupont.Scott:80}
T.~Dupont and R.~Scott.
\newblock Polynomial approximation of functions in {S}obolev spaces.
\newblock {\em Math. Comp.}, 34(150):441--463, 1980.

\bibitem{Ern.Guermond:04}
A.~Ern and J.-L. Guermond.
\newblock {\em Theory and Practice of Finite Elements}, volume 159 of {\em
  Applied Mathematical Sciences}.
\newblock Springer-Verlag, New York, NY, 2004.

\bibitem{Guennebaud.Jacob:10}
G.~Guennebaud and B.~Jacob.
\newblock Eigen v3.
\newblock \url{http://eigen.tuxfamily.org}, 2010.

\bibitem{Herbin.Hubert:08}
R.~Herbin and F.~Hubert.
\newblock Benchmark on discretization schemes for anisotropic diffusion
  problems on general grids.
\newblock In R.~Eymard and J.-M. H\'{e}rard, editors, {\em Finite Volumes for
  Complex Applications V}, pages 659--692. John Wiley \& Sons, 2008.

\bibitem{Heywood.Rannacher:90}
J.~G. Heywood and R.~Rannacher.
\newblock Finite-element approximation of the nonstationary {Navier--Stokes}
  problem. part {IV}: error analysis for second-order time discretization.
\newblock {\em SIAM J. Numer. Anal.}, 27(2):353--384, 1990.

\bibitem{Lewis.Schrefler:98}
R.~W. Lewis and B.~A. Schrefler.
\newblock {\em The finite element method in the static and dynamic deformation
  and consolidation of porous media}.
\newblock Numerical methods in engineering. John Wiley, 1998.

\bibitem{Murad.Loula:92}
M.~A. Murad and F.~D. Loula.
\newblock Improved accuracy in finite element analysis of {Biot}'s
  consolidation problem.
\newblock {\em Comput. Methods Appl. Mech. Engrg.}, 93(3):359--382, 1992.

\bibitem{Murad.Loula:94}
M.~A. Murad and F.~D. Loula.
\newblock On stability and convergence of finite element approximations of
  {Biot}'s consolidation problem.
\newblock {\em Interat. J. Numer. Methods Engrg.}, 37(4), 1994.

\bibitem{Naumovich:06}
A.~Naumovich.
\newblock On finite volume discretization of the three-dimensional {Biot}
  poroelasticity system in multilayer domains.
\newblock {\em Comput. Meth. App. Math.}, 6(3):306--325, 2006.

\bibitem{Phillips:05}
P.~J. Phillips.
\newblock {\em Finite element methods in linear poroelasticity: Theoretical and
  computational results}.
\newblock PhD thesis, University of Texas at Austin, December 2005.

\bibitem{Phillips.Wheeler:07}
P.~J. Phillips and M.~F. Wheeler.
\newblock A coupling of mixed and continuous {Galerkin} finite element methods
  for poroelasticity {I}: the continuous in time case.
\newblock {\em Comput. Geosci.}, 11:131--144, 2007.

\bibitem{Phillips.Wheeler:07*1}
P.~J. Phillips and M.~F. Wheeler.
\newblock A coupling of mixed and continuous {Galerkin} finite element methods
  for poroelasticity {II}: the discrete-in-time case.
\newblock {\em Comput. Geosci.}, 11:145--158, 2007.

\bibitem{Phillips.Wheeler:08}
P.~J. Phillips and M.~F. Wheeler.
\newblock A coupling of mixed and discontinuous {Galerkin} finite-element
  methods for poroelasticity.
\newblock {\em Comput. Geosci.}, 12:417--435, 2008.

\bibitem{Rodrigo.Gaspar.ea:15}
C.~Rodrigo, F.J. Gaspar, X.~Hu, and L.T. Zikatanov.
\newblock Stability and monotonicity for some discretizations of the {Biot}'s
  consolidation model.
\newblock {\em Comput. Methods Appl. Mech. and Engrg.}, 298:183--204, 2016.

\bibitem{Showalter:00}
R.~E. Showalter.
\newblock Diffusion in poro-elastic media.
\newblock {\em J. Math. Anal. Appl.}, 251:310--340, 2000.

\bibitem{Talischi2012}
Cameron Talischi, Glaucio~H. Paulino, Anderson Pereira, and Ivan F.~M. Menezes.
\newblock Polymesher: a general-purpose mesh generator for polygonal elements
  written in matlab.
\newblock {\em Structural and Multidisciplinary Optimization}, 45(3):309--328,
  2012.

\bibitem{Terzaghi:43}
K.~Terzaghi.
\newblock {\em Theoretical soil mechanics}.
\newblock Wiley, New York, 1943.

\bibitem{Wheeler.Xue.ea:14}
M.~F. Wheeler, G.~Xue, and I.~Yotov.
\newblock Coupling multipoint flux mixed finite element methods with continuous
  {Galerkin} methods for poroelasticity.
\newblock {\em Comput. Geosci.}, 18:57--75, 2014.

\end{thebibliography}
\end{footnotesize}

\end{document}